\def\Z{{\rm \mathbb{Z}}}
\def\N{{\rm \mathbb{N}}}
\def\C{{\rm \mathbb{C}}}
\def\R{{\rm \mathbb{R}}}
\def\rk{{\rm rk}}
\def\SL{{\rm SL}}
\renewcommand{\bar}[1]{\overline{#1}}
\renewcommand{\epsilon}{\varepsilon}
\renewcommand{\hat}[1]{\widehat{#1}}
\renewcommand{\tilde}[1]{\widetilde{#1}}
\DeclarePairedDelimiter{\set}{\{}{\}}
\DeclarePairedDelimiter{\ang}{\langle}{\rangle}
\DeclareMathOperator{\im}{im}
\newcommand{\CP}{\mathbb{CP}}
\newcommand{\bbH}{\mathbb{H}}
\newcommand{\calB}{\mathcal{B}}
\newcommand{\calC}{\mathcal{C}}
\newcommand{\calL}{\mathcal{L}}
\newcommand{\calM}{\mathcal{M}}
\newcommand{\calO}{\mathcal{O}}
\newcommand{\calP}{\mathcal{P}}
\newcommand{\calQ}{\mathcal{Q}}
\newcommand{\frakb}{\mathfrak{b}}
\newcommand{\frakc}{\mathfrak{c}}
\newcommand{\frakg}{\mathfrak{g}}
\newcommand{\frakh}{\mathfrak{h}}
\newcommand{\frakl}{\mathfrak{l}}
\newcommand{\frakm}{\mathfrak{m}}
\newcommand{\frakp}{\mathfrak{p}}
\newcommand{\fraks}{\mathfrak{s}}
\newcommand{\fraku}{\mathfrak{u}}
\newcommand{\fraksl}{\mathfrak{sl}}
\newcommand{\Fr}{\mathrm{Fr}}
\newcommand{\PSL}{\mathrm{PSL}}
\newcommand{\SO}{\mathrm{SO}}
\newcommand{\Sp}{\mathrm{Sp}}
\newcommand{\U}{\mathrm{U}}
\newcommand{\sfB}{\mathsf{B}}
\newcommand{\del}{\partial}
\newcommand{\delbar}{\bar{\partial}}
\DeclareMathOperator{\ad}{ad}
\DeclareMathOperator{\Ad}{Ad}
\theoremstyle{plain}
\newtheorem{theorem}{Theorem}[section]
\newtheorem{proposition/example}[theorem]{Proposition/Example}
\newtheorem{proposition}[theorem]{Proposition}
\newtheorem{lemma}[theorem]{Lemma}
\theoremstyle{definition}
\newtheorem{definition}[theorem]{Definition}
\newtheorem{remark}[theorem]{Remark}
\newtheorem{conjecture/question}[theorem]{Conjecture/Question}
\newtheorem{remark/definition}[theorem]{Remark/Definition}
\newtheorem{definition/notation}[theorem]{Definition/Notation}
\numberwithin{equation}{section}
\begin{document}
\title[Conformal limits in Cayley components]{Conformal limits in Cayley components and $\Theta$-positive opers}

\author{Georgios Kydonakis and Mengxue Yang}

\subjclass[2020]{Primary: 58E15, 53C07, 22E15. Secondary: 14D21, 81T13}

\keywords{conformal limit, Cayley component, $\Theta$-positivity, oper, Slodowy slice}
\date{}

\begin{abstract}
  We study Gaiotto's conformal limit for the $G^{\mathbb{R}}$-Hitchin equations, when $G^{\mathbb{R}}$ is a simple real Lie group admitting a $\Theta$-positive structure. We identify a family of flat connections coming from certain solutions to the equations for which the conformal limit exists and admits the structure of an oper. We call this new class of opers appearing in the conformal limit $\Theta$-positive opers. The two families involved are parameterized by the same base space. This space is a generalization of the base of Hitchin's integrable system in the case when the structure group is a split real group. 
\end{abstract}

\maketitle

\section{Introduction}

For a compact Riemann surface $X$ and a complex simple Lie group $G$, the $G$-character variety parameterizes conjugacy classes of fundamental group representations $\rho: \pi_1(\Sigma) \to G$, where $\Sigma$ is the underlying topological surface of $X$. There exists a unique embedding up to conjugation $\iota: \mathrm{SL}_2(\mathbb{C}) \to G$, called the principal embedding, determined by a principal nilpotent element in the Lie algebra $\mathfrak{g}$ of $G$. 
Hitchin in \cite{Hit92} used the restriction of this embedding to $\mathrm{SL}_2(\mathbb{R})$, in which case $\iota(\mathrm{SL}_2(\mathbb{R}))$ is contained in the split real form $G^{\mathbb{R}} \subset G$, in order to demonstrate the existence of a connected component in the character variety, now called the Hitchin component, that is topologically trivial and contains the Teichm\"{u}ller space of $\Sigma$. 
Using the nonabelian Hodge correspondence, the Hitchin component in its original realization was discovered as a set of $G$-Higgs bundles parameterized by a certain vector space of holomorphic differentials over $X$. The $G$-Higgs bundles in this family have been constructed as a special section in the entire moduli space of polystable $G$-Higgs bundles over $X$. 

For a fixed point in the base space of Hitchin's integrable system, one can associate to the Higgs bundles in the Hitchin section, a family of flat $G$-connections in parameters $\zeta \in \mathbb{C}^*$ and $R \in \mathbb{R}^+$. It was shown in \cite{Opers16} that in the scaling limit as $R \to 0$ and $\zeta = \hbar R$ for fixed $\hbar \in \mathbb{C}^*$, this family converges to a flat connection giving the structure of a $G$-oper. This limiting oper is parameterized by the same point on the base of Hitchin's integrable system. In the case when $G=\mathrm{SL}_2(\mathbb{C})$, for instance, a $G$-oper can be alternatively thought of as any of the following: a global Schr\"{o}dinger operator, a complex projective structure on $X$, or a globally defined quantum curve on $X$. 

Gaiotto studied in \cite{Gaiotto} the TBA-like integral equations from \cite{GMN} illustrating the metric on the moduli spaces $\mathcal{M}$ of vacua of four-dimensional $\mathcal{N}=2$ supersymmetric gauge theories compactified on a circle. For theories of class $\mathcal{S}$, the moduli spaces $\mathcal{M}$ coincide roughly with the moduli spaces of solutions to Hitchin's equations. When restricted to the Hitchin section, the solutions gain an extra symmetry which allows one to show explicitly how such solutions would go to opers in the conformal limit. 

\vspace{2mm}

Our objective in the present article is to introduce two sorts of families of flat $G$-connections over $X$ for which a similar passage to opers via the conformal limit  exists, in more general situations than the one outlined above when confining to the Hitchin section and a split real Lie group $G$. Indeed, the Hitchin component is a particular case of what is now called a \emph{higher Teichm\"{u}ller space}. The notion of a \emph{$\Theta$-positive structure} for a simple real Lie group was introduced by Guichard and Wienhard in \cite {GW} in order to propose a unified approach to the study of connected components in moduli spaces of $G$-Higgs bundles or, equivalently, $G$-character varieties, which share essential geometric properties to the classical Teichm\"{u}ller space when $G=\mathrm{PSL}_2(\mathbb{R})$.

The $\Theta$-positive structure of Guichard and Wienhard is a generalization of Lusztig's total positivity condition. The notion is given in terms of properties of the Lie algebra of parabolic subgroups in $G^{\mathbb{R}}$ defined by a subset $\Theta$ of simple positive roots. An important classification theorem then signifies precisely which families of simple real Lie groups $G^{\mathbb{R}}$ admit a $\Theta$-positive structure. For each of these cases in the classification, there exists a natural $\mathfrak{sl}_2(\mathbb{R})$-triple in the real Lie algebra and a canonical real form associated to it, which generalize  the principal $\mathfrak{sl}_2$-triple and the real form of Kostant used for the construction of the Hitchin section. 

Alternatively, these triples can be approached via the weight modules of the Lie algebra $\mathfrak{g}$. In \cite{Cayley}, the notion of a magical $\mathfrak{sl}_2$-triple was introduced and a canonical real form associated to it was obtained; these triples are in bijection with the $\Theta$-principal $\mathfrak{sl}_2(\mathbb{R})$-triples of Guichard and Wienhard. Using this new Lie theoretic machinery,  distinguished connected components in the moduli space of polystable $G^{\mathbb{R}}$-Higgs bundles were exhibited in \cite{Cayley}, called \emph{Cayley components}, analogously to Hitchin's parameterization using the base space of holomorphic differentials. 

It is this Lie theoretic machinery that we use in the present work in order to get our distinguished 2-parameter families of flat $G$-connections for the Lie groups $G$ admitting a $\Theta$-positive structure. Namely, we consider inside a Cayley component a \emph{Slodowy slice} at a uniformizing fixed point of a certain base space, and take the 2-parameter family built from the corresponding solutions to the Hitchin equations. We show that their conformal limit exists and is a point in a \emph{Slodowy slice} of certain $G$-opers at a uniformizing fixed point. These $G$-opers are characterized by the parabolic subgroup $P_{\Theta}$ associated to a $\Theta$-principal $\mathfrak{sl}_2$-triple, thus we call them here \emph{$\Theta$-positive opers}. The precise description of these Slodowy slices and the conformal limit statement will appear within the main body of the article. 

The Slodowy slices at fixed uniformizing points that we introduce here are intrinsically Lie theoretic constructions built from the special $\mathfrak{sl}_2$-triples associated to the $\Theta$-positive structures. Thus, in a sense, these slices generalize the Hitchin section constructed using a principal $\mathfrak{sl}_2$-triple. More abstract Slodowy slices for $\lambda$-connections where considered in \cite{CS}, and  families of 
$G$-opers were parameterized for arbitrary parabolic subgroups $P \subset G$ associated to even nilpotent elements of the Lie algebra of $G$. In particular, the relationship between
the Slodowy functor and higher Teichm\"{u}ller spaces was remarked in \cite[Section 6.4]{CS}. Furthermore, in \cite{CW}, the existence of the conformal limit was established for each stratum of the Bia\l{}ynicki-Birula decomposition of the space of $\lambda$-connections for a Higgs bundle with stable $\mathbb{C}^*$-limit. These articles have provided important insights for the present work.

\vspace{2mm}

\noindent\textbf{Motivation.} Our motivation was to build a mathematical framework for identifying precise families of solutions to the $G^{\mathbb{R}}$-Hitchin equations that produce $G^{\mathbb{R}}$-opers in their conformal limits. The families of solutions and of opers realized in this article lie in certain Slodowy slices which can be explicitly described and are parameterized by the same base space. It would be interesting to return to the original TBA-equations of \cite{Gaiotto} and \cite{GMN} describing the metric on the moduli space of four-dimensional $\mathcal{N}=2$ gauge theories compactified on a circle and interpret our Slodowy slices as solution sets having ``good'' conformal limits. Gaiotto's original prediction in \cite{Gaiotto} was that the resulting submanifold at the scaling limit is associated to the boundary condition defined by the compactification of the four-dimensional theory on a deformed cigar. The Slodowy slices constructed here are considered for general simple Lie groups $G$ admitting a $\Theta$-positive structure. One is thus lead to consider whether this structure is related to an additional symmetry in the equations and their solutions as was the case in \cite{Gaiotto} and \cite{Opers16} for the case of the Hitchin section.

\bigskip
\noindent\textbf{Acknowledgements.}
We warmly thank Brian Collier, Henry Liu, Andrew Neitzke and Valdo Tatitscheff for many fruitful discussions, remarks and shared insights. G.K. is grateful to the Kavli Institute for the Physics and Mathematics of the Universe for its hospitality and to the Scientific Committee of the University of Patras through the program ``Medicos'' for support. We are also grateful to the Alexander von Humboldt Foundation and the grant PosLieRep - ERC 101018839 for sponsorship during our stay at the University of Heidelberg where this work was initiated.

\section{Higgs bundles in Cayley components and \texorpdfstring{$\Theta$}{Theta}-positive structures}

In this section we collect the necessary background for the development of our results, namely, twisted real Higgs bundles and their deformation theory, magical $\mathfrak{sl}_2$-triples and $\Theta$-positive triples, Cayley components inside the moduli space of real Higgs bundles. We study closely the $\mathbb{C}^*$-fixed points inside the Cayley components and harmonic reductions on the uniformizing Higgs bundles that lie in these components.  

\subsection{\texorpdfstring{$L$-twisted $G^{\mathbb{R}}$}{L-twisted GR}-Higgs bundles and infinitesimal deformations}

For a real reductive Lie group $G^{\mathbb{R}}$, let $H^\R \subset G^{\mathbb{R}}$ be a maximal compact subgroup and let $\mathfrak{g}^{\mathbb{R}}=\mathfrak{h}^\R\oplus \mathfrak{m}^\R$ be a Cartan decomposition of the Lie algebra $\mathfrak{g}^{\mathbb{R}}$ of $G^{\mathbb{R}}$, so that the Lie algebra structure of $\mathfrak{g}^{\mathbb{R}}$ satisfies the relations:
\[\left[ \mathfrak{h}^\R,\mathfrak{h}^\R \right]\subset \mathfrak{h}^\R,\,\,\,\,\,\,\,\left[ \mathfrak{h}^\R,\mathfrak{m}^\R \right]\subset \mathfrak{m}^\R,\,\,\,\,\,\,\,\left[ \mathfrak{m}^\R,\mathfrak{m}^\R \right]\subset \mathfrak{h}^\R.\]
We will drop the superscript $\R$ to denote the complexification of an object. Consider the complexification ${{\mathfrak{g}}}={{\mathfrak{h}}}\oplus {{\mathfrak{m}}}$  of this Cartan decomposition. The action of the group $H^\R$ on the Lie algebra $\mathfrak{m}^\R$ via the adjoint representation extends to a linear holomorphic action of ${{H}}$ on ${{\mathfrak{m}}}=\mathfrak{m}^\R\otimes_\R \mathbb{C}$:
\[\iota :{{H}}\to \text{Aut}\left( {{\mathfrak{m}}} \right).\]
The Killing form on $\mathfrak{g}^{\mathbb{R}}$ induces on $\mathfrak{m}$ a Hermitian structure preserved by the action on $H^\R$. 

Let now $X$ be a Riemann surface of genus $g \ge 2$ and let $K=T^*X$ be the canonical line bundle of $X$. For a holomorphic principal $H$-bundle $\calP_H$ over $X$, denote by $\calP_H(\mathfrak{m})=\calP_H \times_{H} \mathfrak{m}$, the $\mathfrak{m}$-bundle associated to $\calP_H$ via the isotropy representation $\iota$ above. For a fixed holomorphic line bundle $L$ over the Riemann surface $X$, we have the following:

\begin{definition}
  An \emph{$L$-twisted $G^{\mathbb{R}}$-Higgs bundle} over $X$ is a pair $(\calP, \Phi)$, where $\calP$ is a holomorphic principal $H$-bundle over $X$ and $\Phi$ is a holomorphic global section of $\calP(\mathfrak{m}) \otimes L$. \\
  Two $L$-twisted $G^{\mathbb{R}}$-Higgs bundles $(\calP,\Phi)$ and $(\calP', \Phi')$ will be called \emph{isomorphic} when there exists a holomorphic gauge transformation $f:\calP \to \calP'$ such that $\Phi=f^* \Phi'$. \\
  When $L=K$, we will be just referring to the pairs above as \emph{$G^{\mathbb{R}}$-Higgs bundles}.
\end{definition}

Let $\tilde{G}^{\mathbb{R}} \subset G^{\mathbb{R}}$ be a reductive subgroup of $G^{\mathbb{R}}$ with maximal compact subgroup $\tilde{H}^{\mathbb{R}} \subset \tilde{G}^{\mathbb{R}}$ and Cartan decomposition $\tilde{\mathfrak{g}} = \tilde{\mathfrak{h}} \oplus \tilde{\mathfrak{m}}$. A reduction of structure group of an $L$-twisted $G^{\mathbb{R}}$-Higgs bundle is defined as follows.

\begin{definition}
Let $(\calP, \Phi)$ be an $L$-twisted $G^{\mathbb{R}}$-Higgs bundle over $X$. A \emph{reduction of structure group} of the pair $(\calP, \Phi)$ to an $L$-twisted $\tilde{G}^{\mathbb{R}}$-Higgs bundle $(\tilde{\calP}, \tilde{\Phi})$ is given by the following data:
\begin{enumerate}
\item A holomorphic reduction of the structure group of the principal $H$-bundle $\calP$ over $X$  to a principal $\tilde{H}$-bundle $\tilde{\calP} \hookrightarrow \calP$ over $X$, and 
\item A holomorphic section $\tilde{\Phi}$ of $\tilde{\calP}(\tilde{\mathfrak{m}}) \otimes L$ which maps to $\Phi$ under the embedding $\tilde{\calP}(\tilde{\mathfrak{m}}) \otimes L \hookrightarrow \calP(\mathfrak{m}) \otimes L$.
\end{enumerate}
\end{definition}

In order to define a moduli space of $L$-twisted $G^{\mathbb{R}}$-Higgs bundles, one needs to consider suitable notions of semistability, stability and polystability in the principal bundle setting. These notions are defined in terms of an antidominant character for a parabolic subgroup and a holomorphic reduction of the structure group of the bundle $\calP$. Since we will not be examining these stability conditions on certain $G^{\mathbb{R}}$-Higgs bundles throughout the article, we shall not recall the notions here; the reader is referred for a detailed account to \cite{GGM2009} or \cite{Aparicio} in the simpler case when ${{H}}$ is semisimple in particular.

The \emph{moduli space of polystable $L$-twisted $G^{\mathbb{R}}$-Higgs bundles} over $X$ is defined as the set of isomorphism classes of polystable $L$-twisted $G^{\mathbb{R}}$-Higgs bundles $\left( \calP, \Phi  \right)$ modulo the complex gauge group. We will denote this space by  $\mathsf{\mathcal{M}}_L\left( G^{\mathbb{R}} \right)$, and whenever $L=K$ simply by $\mathsf{\mathcal{M}}\left( G^{\mathbb{R}} \right)$.

A systematic infinitesimal study of the moduli functor for $L$-twisted $G^{\mathbb{R}}$-Higgs bundles was initiated by Biswas and Ramanan in \cite{BR}. The infinitesimal deformation space of a stable pair turns out to be a suitable hypercohomology which can be explicitly computed. We next review standard deformation theory for $L$-twisted $G^{\mathbb{R}}$-Higgs bundles and refer the reader to \cite{BR} or \cite[Section 3.3]{GGM2009} for more details. 

\begin{definition} Let $\left( \calP, \Phi  \right)$ be an $L$-twisted $G^{\mathbb{R}}$-Higgs bundle over $X$. The \emph{deformation complex} of $\left( \calP, \Phi  \right)$ is the following complex of sheaves
  \begin{equation}\label{def_cx}
    {{C}^{\bullet }}( \calP, \Phi  ):\calP( {{\mathfrak{h}}} )\xrightarrow{d\iota(\Phi)} \calP( {{\mathfrak{m}}})\otimes L.
  \end{equation}
\end{definition}

Similarly to the fact that the infinitesimal deformation space of a holomorphic vector bundle $E$ is isomorphic to $H^1(\text{End}(E))$, the space of infinitesimal deformations of an $L$-twisted $G^{\mathbb{R}}$-Higgs bundle $(\calP, \Phi)$ is naturally isomorphic to the group $\mathbb{H}^1({{C}^{\bullet }}\left( \calP, \Phi  \right))$.

By definition of hypercohomology, there is a natural long exact sequence:
\begin{align*}
  0 & \to \mathbb{H}^0({{C}^{\bullet }}\left( \calP, \Phi  \right)) \to H^0(\calP(\mathfrak{h})) \xrightarrow{d\iota(\Phi)} H^0(\calP(\mathfrak{m}) \otimes L)\\
    & \to \mathbb{H}^1({{C}^{\bullet }}\left( \calP, \Phi  \right)) \to H^1(\calP(\mathfrak{h})) \xrightarrow{d\iota(\Phi)} H^1(\calP(\mathfrak{m}) \otimes L) \to  \mathbb{H}^2({{C}^{\bullet }}\left( \calP, \Phi  \right)) \to 0.
\end{align*}
Therefore, one gets the following characterization for the infinitesimal automorphism space $\text{aut}(\calP, \Phi)$ of a pair $(\calP, \Phi)$
\[\text{aut}(\calP, \Phi) = \{s \in H^0(\calP(\mathfrak{h})) \, \vert \, d\iota(s)(\Phi) =0 \} \simeq \mathbb{H}^0({{C}^{\bullet }}\left( \calP, \Phi  \right)).\]

Considering the dual of the deformation complex (\ref{def_cx}),
\begin{equation}\label{dual_def_cx}
  {{C}^{\bullet }}\left( \calP, \Phi  \right)^* \otimes K:\calP( {{\mathfrak{m}}}) \otimes L^{-1}K \xrightarrow{d\iota(\Phi)^*\otimes \text{Id}_K} \calP({{\mathfrak{h}}}) \otimes K,
\end{equation}
the Killing form on $\mathfrak{g}$ identifies $\mathcal{P}(\mathfrak{g})^*$ with $\mathcal{P}(\mathfrak{g})$, and $d\iota(\Phi)^*$ with $-d\iota(\Phi)$; now
Serre duality in hypercohomology provides 
a natural isomorphism
\[\mathbb{H}^2({{C}^{\bullet }}\left( \calP, \Phi  \right)) \simeq \mathbb{H}^0({{C}^{\bullet }}\left( \calP, \Phi  \right)^*\otimes K)^*,\]
for an $L$-twisted $G$-Higgs bundle $(\calP, \Phi)$, where $G$ is a complex Lie group. 

The next proposition gives the zeroth and the second hypercohomology group for a stable $L$-twisted $G^{\mathbb{R}}$-Higgs bundle:

\begin{proposition}\cite[Propositions 3.15 and 3.17]{GGM2009}
  \label{prop:hypercohom-of-G-Higgs-bundles}
  Let $G^{\mathbb{R}}$ be a real Lie group with complexification $G$, and let $(\calP, \Phi)$ be an $L$-twisted $G^{\mathbb{R}}$-Higgs bundle which can be also viewed as a $G$-Higgs bundle.
  \begin{enumerate}
  \item There is an isomorphism of deformation complexes
    \[{C}_{G}^{\bullet }\left( \calP, \Phi  \right) \simeq {C}_{G^{\mathbb{R}}}^{\bullet }\left( \calP, \Phi  \right) \oplus {C}_{G^{\mathbb{R}}}^{\bullet }\left( \calP, \Phi  \right)^* \otimes K. \]
    Moreover, Serre duality provides the isomorphism
    \[\mathbb{H}^0({C}_{G}^{\bullet }\left( \calP, \Phi  \right)) \simeq \mathbb{H}^0({C}_{G^{\mathbb{R}}}^{\bullet }\left( \calP, \Phi  \right)) \oplus \mathbb{H}^2({C}_{G^{\mathbb{R}}}^{\bullet }\left( \calP, \Phi  \right))^*. \]
  \item Assuming that $G^{\mathbb{R}}$ is a real semisimple Lie group with complexification $G$, then for an $L$-twisted $G^{\mathbb{R}}$-Higgs bundle $(E, \Phi)$ which is stable when viewed as a $G$-Higgs bundle, one has
    \[\mathbb{H}^0({C}_{G^{\mathbb{R}}}^{\bullet }\left( \calP, \Phi  \right)) =0=\mathbb{H}^2({C}_{G^{\mathbb{R}}}^{\bullet }\left( \calP, \Phi  \right)). \]
  \end{enumerate}\end{proposition}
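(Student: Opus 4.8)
The plan is to deduce both assertions from the decomposition $\mathfrak{g} = \mathfrak{h} \oplus \mathfrak{m}$ of the complexified Cartan decomposition, together with the bracket relations, which force $\ad(\Phi)$ to send $\calP(\mathfrak{h})$ into $\calP(\mathfrak{m})$ and $\calP(\mathfrak{m})$ into $\calP(\mathfrak{h})$ whenever $\Phi$ is a section of $\calP(\mathfrak{m}) \otimes L$. First I would make explicit the passage from the $G^{\mathbb{R}}$-Higgs bundle to a $G$-Higgs bundle: extending the structure group from $H$ to $G$ yields an adjoint bundle that splits as $\calP(\mathfrak{g}) = \calP(\mathfrak{h}) \oplus \calP(\mathfrak{m})$, so the $G$-deformation complex $C_G^\bullet(\calP, \Phi) : \calP(\mathfrak{g}) \xrightarrow{\ad(\Phi)} \calP(\mathfrak{g}) \otimes L$ inherits this splitting. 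Since $\ad(\Phi)$ interchanges the two summands, the complex decomposes as a direct sum of the subcomplex $\calP(\mathfrak{h}) \xrightarrow{d\iota(\Phi)} \calP(\mathfrak{m}) \otimes L$, which is exactly $C_{G^{\mathbb{R}}}^\bullet(\calP, \Phi)$, and the complementary subcomplex $\calP(\mathfrak{m}) \xrightarrow{\ad(\Phi)} \calP(\mathfrak{h}) \otimes L$.

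The next step is to identify this complementary subcomplex with the dual complex $C_{G^{\mathbb{R}}}^\bullet(\calP, \Phi)^* \otimes K$ of (\ref{dual_def_cx}). Here I would use that the Killing form restricts to nondegenerate $H$-invariant pairings on $\mathfrak{h}$ and on $\mathfrak{m}$ separately, these being orthogonal complements; the pairings furnish $H$-equivariant isomorphisms $\calP(\mathfrak{h}) \simeq \calP(\mathfrak{h})^*$ and $\calP(\mathfrak{m}) \simeq \calP(\mathfrak{m})^*$ carrying $\ad(\Phi)$ to $-d\iota(\Phi)^*$, exactly as recorded above in passing from (\ref{def_cx}) to (\ref{dual_def_cx}). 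Taking hypercohomology of the direct sum and then applying the Serre duality isomorphism $\mathbb{H}^2(C_{G^{\mathbb{R}}}^\bullet) \simeq \mathbb{H}^0((C_{G^{\mathbb{R}}}^\bullet)^* \otimes K)^*$ converts the contribution of the complementary complex into $\mathbb{H}^2(C_{G^{\mathbb{R}}}^\bullet)^*$, which is the content of assertion (1).

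For assertion (2), I would invoke the standard fact that a $G$-Higgs bundle stable for a semisimple complex group $G$ is simple: its infinitesimal automorphisms, computed by $\mathbb{H}^0(C_G^\bullet(\calP, \Phi))$, reduce to the center of $\mathfrak{g}$, which vanishes. Substituting $\mathbb{H}^0(C_G^\bullet) = 0$ into the direct-sum decomposition $\mathbb{H}^0(C_G^\bullet) \simeq \mathbb{H}^0(C_{G^{\mathbb{R}}}^\bullet) \oplus \mathbb{H}^2(C_{G^{\mathbb{R}}}^\bullet)^*$ of part (1) forces both summands to be zero, giving $\mathbb{H}^0(C_{G^{\mathbb{R}}}^\bullet) = 0$ and $\mathbb{H}^2(C_{G^{\mathbb{R}}}^\bullet)^* = 0$ simultaneously, hence $\mathbb{H}^2(C_{G^{\mathbb{R}}}^\bullet) = 0$ as claimed.

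I expect the main technical point to be the identification in part (1) of the complementary subcomplex with the twisted dual complex: one must check that the Killing-form pairings on $\mathfrak{h}$ and $\mathfrak{m}$ are genuinely $H$-equivariant, so that they globalize to the associated bundles, and that they intertwine the two differentials up to the expected sign, after which Serre duality supplies the twist by $K$. Once this bookkeeping is settled, assertion (2) follows formally from the vanishing of $\mathbb{H}^0(C_G^\bullet)$ and the direct-sum decomposition.
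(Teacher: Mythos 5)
Your proposal is correct and follows essentially the same route as the paper intends: the paper states this proposition without proof (importing it from [GGM2009]), but its preceding setup --- the splitting $\calP(\mathfrak{g})=\calP(\mathfrak{h})\oplus\calP(\mathfrak{m})$ with $d\iota(\Phi)$ interchanging the summands, the Killing-form identification of $d\iota(\Phi)^*$ with $-d\iota(\Phi)$, and Serre duality in hypercohomology --- is precisely what you assemble, and your part (2) (stability for semisimple $G$ kills $\mathbb{H}^0(C_G^\bullet)$, then the splitting forces both summands to vanish) is the standard argument of the cited source. The one point to watch is that the complementary subcomplex $\calP(\mathfrak{m})\xrightarrow{d\iota(\Phi)}\calP(\mathfrak{h})\otimes L$ is the dual complex twisted by $L$ rather than by $K$, so the displayed isomorphism and the resulting $\mathbb{H}^2$-identification via Serre duality hold verbatim in the case $L=K$ relevant to the paper --- a caveat inherited from the statement itself, not a gap in your argument.
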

We close this short review with the following useful result:
\begin{proposition}
  Let $(\calP, \Phi)$ be a stable $L$-twisted $G^{\mathbb{R}}$-Higgs bundle. If $(\calP,\Phi)$ is simple, that is, if $\text{Aut}(\calP, \Phi)=\text{ker}(\iota) \cap Z(H)$, and it is stable as a $G$-Higgs bundle, then it represents a smooth point in the moduli space $\mathcal{M}_L(G^{\mathbb{R}})$.
\end{proposition}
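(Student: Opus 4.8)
The plan is to run the standard deformation-theoretic criterion for moduli of Higgs bundles: the Zariski tangent space to $\mathcal{M}_L(G^{\mathbb{R}})$ at $(\calP,\Phi)$ is $\mathbb{H}^1({C}_{G^{\mathbb{R}}}^{\bullet}(\calP,\Phi))$ and the obstructions to smoothness lie in $\mathbb{H}^2({C}_{G^{\mathbb{R}}}^{\bullet}(\calP,\Phi))$, so I will show that this obstruction space is as small as the deformation theory permits. First I would extract the infinitesimal automorphisms from the simplicity hypothesis: differentiating $\text{Aut}(\calP,\Phi)=\ker(\iota)\cap Z(H)$ yields $\mathbb{H}^0({C}_{G^{\mathbb{R}}}^{\bullet}(\calP,\Phi))=\text{aut}(\calP,\Phi)=\{s\in\mathfrak{z}(\mathfrak{h}):[s,\mathfrak{m}]=0\}$, which is exactly $\mathfrak{z}(\mathfrak{g})\cap\mathfrak{h}$, the part of the center of $\mathfrak{g}$ sitting in $\mathfrak{h}$.

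Next I would pass to the complexification. Since $(\calP,\Phi)$ is stable as a $G$-Higgs bundle, its only infinitesimal automorphisms are the constant central sections, so $\mathbb{H}^0({C}_G^{\bullet}(\calP,\Phi))=\mathfrak{z}(\mathfrak{g})=(\mathfrak{z}(\mathfrak{g})\cap\mathfrak{h})\oplus(\mathfrak{z}(\mathfrak{g})\cap\mathfrak{m})$, the last decomposition holding because the center is preserved by the Cartan involution $\theta$. Feeding these two computations into the splitting $\mathbb{H}^0({C}_G^{\bullet})\simeq\mathbb{H}^0({C}_{G^{\mathbb{R}}}^{\bullet})\oplus\mathbb{H}^2({C}_{G^{\mathbb{R}}}^{\bullet})^*$ of Proposition~\ref{prop:hypercohom-of-G-Higgs-bundles}(1)—which itself incorporates the Serre-duality identification $\mathbb{H}^2({C}_{G^{\mathbb{R}}}^{\bullet})\simeq\mathbb{H}^0(({C}_{G^{\mathbb{R}}}^{\bullet})^*\otimes K)^*$—the summand $\mathfrak{z}(\mathfrak{g})\cap\mathfrak{h}$ cancels and I am left with the clean identification $\mathbb{H}^2({C}_{G^{\mathbb{R}}}^{\bullet}(\calP,\Phi))^*\simeq\mathfrak{z}(\mathfrak{g})\cap\mathfrak{m}$.

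Finally I would deduce smoothness. The pair $(\calP,\Phi)$ is stable as a $G$-Higgs bundle, hence a smooth point of the moduli space $\mathcal{M}_L(G)$; the isomorphism of deformation complexes in Proposition~\ref{prop:hypercohom-of-G-Higgs-bundles}(1) exhibits $\mathbb{H}^1({C}_{G^{\mathbb{R}}}^{\bullet})$ as the $(+1)$-eigenspace of the involution that $\theta$ induces on the smooth space $\mathcal{M}_L(G)$, whose fixed locus near $(\calP,\Phi)$ is cut out by the reality condition and coincides with $\mathcal{M}_L(G^{\mathbb{R}})$ once simplicity guarantees there are no extra automorphisms to spoil the local model. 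Since the fixed locus of an involution on a smooth variety is smooth, $(\calP,\Phi)$ is a smooth point of $\mathcal{M}_L(G^{\mathbb{R}})$.

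I expect the genuine obstacle to be this last step rather than the bookkeeping in the first two. In contrast to the semisimple situation of Proposition~\ref{prop:hypercohom-of-G-Higgs-bundles}(2), where $\mathbb{H}^2$ vanishes outright, here the computation leaves the residual obstruction $\mathbb{H}^2({C}_{G^{\mathbb{R}}}^{\bullet})^*\simeq\mathfrak{z}(\mathfrak{g})\cap\mathfrak{m}$, which is genuinely nonzero whenever $G^{\mathbb{R}}$ carries a noncompact central torus. One must therefore argue that this purely central piece does not actually obstruct a deformation—either through the fixed-point-of-an-involution argument above, or by checking directly that the quadratic obstruction map $\mathbb{H}^1\otimes\mathbb{H}^1\to\mathbb{H}^2$ annihilates the central component because the center deforms freely. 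Verifying that the $\theta$-fixed locus coincides with $\mathcal{M}_L(G^{\mathbb{R}})$ near the point is exactly where the hypothesis $\text{Aut}(\calP,\Phi)=\ker(\iota)\cap Z(H)$ earns its keep.
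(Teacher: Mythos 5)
First, note that the paper contains no proof of this proposition: it is stated as a review item imported from \cite{GGM2009}, and the intended justification is precisely the hypercohomology computation you carry out. Your first two steps are correct and match it: simplicity gives $\mathbb{H}^0(C^{\bullet}_{G^{\mathbb{R}}}(\calP,\Phi)) = \ker(d\iota)\cap\mathfrak{z}(\mathfrak{h}) = \mathfrak{z}(\mathfrak{g})\cap\mathfrak{h}$, stability as a $G$-Higgs bundle gives $\mathbb{H}^0(C^{\bullet}_{G}(\calP,\Phi)) = \mathfrak{z}(\mathfrak{g})$, and the splitting of Proposition \ref{prop:hypercohom-of-G-Higgs-bundles}(1) then yields $\mathbb{H}^2(C^{\bullet}_{G^{\mathbb{R}}}(\calP,\Phi))^* \simeq \mathfrak{z}(\mathfrak{g})\cap\mathfrak{m}$. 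You also correctly locate the real content: when $\mathfrak{z}(\mathfrak{g})\cap\mathfrak{m}\neq 0$ the obstruction space does not vanish outright. (In the situations where this paper actually invokes the proposition, $\mathfrak{g}$ is simple, so $\mathfrak{z}(\mathfrak{g})=0$, both $\mathbb{H}^0$ and $\mathbb{H}^2$ vanish, and smoothness follows at once from Kuranishi theory plus simplicity; your worry is only live at the stated reductive generality.)

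The genuine gap is your last step, and your preferred route through it fails. The fixed-locus argument presupposes that $(\calP,\Phi)$ is a smooth point of $\calM_L(G)$ merely because it is $G$-stable; but for $L=K$ the complex deformation complex is Serre self-dual, so $\mathbb{H}^2(C^{\bullet}_G)\simeq \mathbb{H}^0(C^{\bullet}_G)^*\simeq\mathfrak{z}(\mathfrak{g})^*$, which is nonzero in exactly the troublesome cases — ambient smoothness already requires the central-unobstructedness statement you are trying to derive, so the argument is circular in method. Moreover, the identification of $\calM_L(G^{\mathbb{R}})$ near the point with the fixed locus of the induced involution on $\calM_L(G)$ is asserted, not proved, and is delicate in general: the natural map $\calM_L(G^{\mathbb{R}})\to\calM_L(G)$ lands in that fixed locus but is in general neither injective nor locally surjective onto it, since a fixed point is only a pair isomorphic to its conjugate and may admit several, or no, reductions to $G^{\mathbb{R}}$. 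Your alternative suggestion is the correct completion, and it is short: decompose $\mathfrak{g}=\mathfrak{z}(\mathfrak{g})\oplus[\mathfrak{g},\mathfrak{g}]$, a splitting invariant under the Cartan involution and under $\ad(\Phi)$, so $C^{\bullet}_{G^{\mathbb{R}}}$ splits accordingly; the central subcomplex has zero differential, hence deforms freely, and by your count $\mathbb{H}^2(C^{\bullet}_{G^{\mathbb{R}}})^*\simeq\mathfrak{z}(\mathfrak{g})\cap\mathfrak{m}$ is entirely absorbed by it, forcing $\mathbb{H}^2$ of the semisimple summand to vanish; finally, the quadratic and higher Kuranishi obstructions are built from Lie brackets, hence take values in $[\mathfrak{g},\mathfrak{g}]$ and have no central component. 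Thus every obstruction vanishes, and simplicity guarantees that $\text{Aut}(\calP,\Phi)$ acts trivially on the Kuranishi slice, so the local model is smooth; this is the route of \cite{GGM2009}. A minor point: in this paper $\sigma$ denotes the (complexified) Cartan involution while $\theta$ is the Cayley involution, so your use of $\theta$ should be renamed to avoid a clash.
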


\subsection{\texorpdfstring{$G^{\mathbb{R}}$}{GR}-Higgs bundles and Hitchin equations}
Let $\left( \calP, \Phi  \right)$ be a $G^{\mathbb{R}}$-Higgs bundle over a compact Riemann surface $X$. By a slight abuse of notation we shall denote the underlying smooth objects of $\calP$ and $\Phi $ by the same symbols. The Higgs field can be thus viewed as a $\left( 1,0 \right)$-form $\Phi \in {{\Omega }^{1,0}}\left( \calP\left( {{\mathfrak{m}}} \right) \right)$. Given a reduction $\calQ$ of structure group to $H^\R$ in the ${{H}}$-bundle $\calP$, we denote by ${{F}_{D}}$ the curvature of the unique (up to scalar) connection $D$ compatible with $\calQ$ and the holomorphic structure on $\calP$. Let ${{\rho }_{\calQ}}:{{\Omega }^{1,0}}\left( \calP\left( {{\mathfrak{g}}} \right) \right)\to {{\Omega }^{0,1}}\left( \calP\left( {{\mathfrak{g}}} \right) \right)$ be defined by the complex conjugation of ${{\mathfrak{g}}}$, given fiberwise by the reduction $\calQ$ combined with complex conjugation on complex 1-forms. The next theorem was proved in \cite{GGM2009} for an arbitrary reductive real Lie group $G^{\mathbb{R}}$.

\begin{theorem}\cite[Theorem 3.21]{GGM2009}\label{thm:Hitchin-Kobayashi} There exists a reduction $\calQ$ of the structure group of $\calP$ from ${{H}}$ to $H^\R$ satisfying the Hitchin equation
  \[{{F}_{D}}-\left[ \Phi ,{{\rho }_{\calQ}}\left( \Phi  \right) \right]=0\]
  if and only if $\left( \calP, \Phi  \right)$ is polystable.
\end{theorem}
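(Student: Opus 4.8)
The plan is to prove the two implications separately, organizing the argument around the interpretation of the Hitchin equation as the vanishing of a moment map. Fix the underlying smooth $H$-bundle together with a $C^\infty$ reduction to $H^\R$; such a reduction $\calQ$ is the same datum as a Hermitian metric, and together with the holomorphic structure on $\calP$ it determines the Chern connection $D$ and its curvature $F_D$. The configuration space of pairs $(\bar\partial_{\calP},\Phi)$ carries a symplectic form preserved by the real gauge group $\mathcal{G}_{H^\R}$, and a direct computation identifies the map $(\bar\partial_{\calP},\Phi)\mapsto F_D-[\Phi,\rho_{\calQ}(\Phi)]$ with the associated moment map, taking values in the $\mathfrak{h}^\R$-valued two-forms $\Omega^2(\calQ(\mathfrak{h}^\R))$, which the Killing form and integration over $X$ identify with the dual of $\Lie\,\mathcal{G}_{H^\R}$. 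The equation is consistent because the bracket relation $[\mathfrak{m},\mathfrak{m}]\subset\mathfrak{h}$ guarantees that $[\Phi,\rho_{\calQ}(\Phi)]$ lands in the $\mathfrak{h}$-part, matching $F_D$. From this viewpoint the theorem is an infinite-dimensional Kempf--Ness statement, and the strategy is to transfer it to the general Hitchin--Kobayashi correspondence for Hamiltonian actions.

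For the easy implication (solution $\Rightarrow$ polystable), suppose $\calQ$ solves the equation. Given any reduction $\sigma$ of $\calP$ to a parabolic subgroup $P\subset H$ together with an antidominant character $\chi$ of $P$ compatible with $\Phi$, I would compute the associated degree by Chern--Weil theory, pairing $F_D$ against the central element determined by $\chi$ and integrating over $X$. Substituting the Hitchin equation to replace $F_D$ by $[\Phi,\rho_{\calQ}(\Phi)]$, and using that $\Phi$ preserves the filtration induced by $\sigma$, the extra term contributes with a definite sign. This yields $\deg(\sigma,\chi)\le 0$, with equality forcing $\Phi$ to be block-diagonal with respect to the reduction, which is exactly the defect that distinguishes stability from polystability.

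For the hard implication (polystable $\Rightarrow$ solution), the plan is to produce a zero of the moment map inside the complex gauge orbit of $(\bar\partial_{\calP},\Phi)$, that is, a metric $h$ varying within the space of $H^\R$-reductions and solving the Hitchin equation. I would realize this as the minimization of a Donaldson-type functional $M(h_0,h)$ on the space of Hermitian metrics, whose Euler--Lagrange equation is precisely $F_D-[\Phi,\rho_{\calQ}(\Phi)]=0$. This functional is geodesically convex in the natural affine structure on metrics, so any critical point is a minimum, and the content is to show a minimizer exists; equivalently, one may run the Yang--Mills--Higgs gradient flow and establish long-time existence together with convergence. Both routes reduce to the general correspondence of Mundet i Riera for a Hamiltonian action of a compact group with complexification on a K\"ahler fibration, applied here to the isotropy action $\iota:H\to\Aut(\mathfrak{m})$; one must then check that the abstract stability condition furnished by the maximal-weight function agrees with the concrete polystability of $(\calP,\Phi)$, which is the translation of the Hilbert--Mumford criterion into the parabolic-reduction language used above.

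The main obstacle is the analytic existence in the hard direction: controlling the metric $h$ uniformly along the flow (or along a minimizing sequence). The decisive step is a $C^0$-estimate bounding $\sup_X|\log h|$ in terms of the energy, where stability is invoked to exclude the formation of a destabilizing Uhlenbeck-type limit. One must also accommodate the non-compactness of the complex group $H$ and the fact that the Higgs field takes values in the isotropy module $\mathfrak{m}\otimes L$ rather than in $\mathfrak{g}\otimes L$; carrying the $\mathfrak{h}$/$\mathfrak{m}$ grading through every estimate is what keeps the moment-map formalism compatible with the real structure. Finally, the polystable case is recovered from the stable one by splitting off automorphisms: a polystable pair reduces to a stable $\tilde{G}^\R$-Higgs bundle together with a flat factor, to which the stable existence result and an induction on the structure group apply.
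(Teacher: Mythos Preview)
The paper does not prove this theorem. It is stated with a citation to \cite[Theorem 3.21]{GGM2009} and used as a black box; no argument, sketch, or even indication of method appears anywhere in the text. So there is no ``paper's own proof'' to compare against.

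Your outline is a faithful summary of the strategy that \cite{GGM2009} actually carries out: interpret the Hitchin equation as the zero set of a moment map for the $\mathcal{G}_{H^\R}$-action, deduce the easy direction by a Chern--Weil degree computation, and obtain the hard direction by invoking the general Hitchin--Kobayashi correspondence of Mundet i Riera for K\"ahler fibrations, with the crucial work being the identification of the abstract maximal-weight stability with the concrete parabolic-reduction polystability of the pair. As a roadmap this is correct, but it remains a roadmap: the $C^0$-estimate you flag as ``the decisive step'' is exactly where the analytic content lives, and you have not indicated how stability enters to rule out blow-up (in \cite{GGM2009} this is done via the analytic-stability condition and a contradiction argument on weak limits). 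Since the present paper treats the result as background, a proof here is neither expected nor needed; if you wish to include one, it should be a reference to \cite{GGM2009} rather than an independent argument.
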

From the point of view of moduli spaces, it is convenient to fix a smooth principal $H^\R$-bundle ${{\textbf{P}}_{H^\R}}$ with fixed topological class $d$ in ${{\pi }_{1}}\left( H^\R \right)$ and study the moduli space of solutions to Hitchin's equations for a pair $\left( A, \Phi  \right)$ consisting of an $H^\R$-connection 1-form $A$ and $\Phi \in {{\Omega }^{1,0}}\left( X,\textbf{P}_{H^\R}\left( {{\mathfrak{m}}} \right) \right)$:
\begin{align}
  {{F}_{A}}-\left[ \Phi ,\rho_\calQ \left( \Phi  \right) \right] &= 0 \label{Hit_eq:1} \\
  {{\bar{\partial }}_{A}}\Phi &= 0, \label{Hit_eq:2}
\end{align}
where ${{d}_{A}}$ is the covariant derivative associated to $A$, and ${{\bar{\partial }}_{A}}$ is the $\left( 0,1 \right)$-part of ${{d}_{A}}$ defining the holomorphic structure on ${{\textbf{P}}_{H^\R}}$. Moreover, $\rho_\calQ $ is defined by the fixed reduction of structure group ${\textbf{P}_{H^\R}}\hookrightarrow {\textbf{P}_{H^\R}}\left( {{H}} \right)$. The gauge group ${{\mathsf{\mathcal{G}}}_{H^\R}}$ of ${\textbf{P}_{H^\R}}$ acts on the space of solutions by conjugation and the moduli space of solutions is defined by
\[\mathsf{\mathcal{M}}^{\text{gauge}}\left( G^{\mathbb{R}} \right):={\left\{ \left( A, \Phi  \right)\text{ satisfying (\ref{Hit_eq:1}) and (\ref{Hit_eq:2})} \right\}}/{{{\mathsf{\mathcal{G}}}_{H^\R}}}\;\]
Now, Theorem \ref{thm:Hitchin-Kobayashi} implies the existence of a bijection
\[{\mathsf{\mathcal{M}}}\left( G^{\mathbb{R}} \right)\cong \mathsf{\mathcal{M}}^{\text{gauge}}\left( G^{\mathbb{R}} \right).\]

Using the one-to-one correspondence between $H^\R$-connections on ${\textbf{P}_{H^\R}}$ and $\bar{\partial }$-operators on ${\textbf{P}_{{{H}}}}$, the homeomorphism in the above theorem can be interpreted by saying that in the  $\mathsf{\mathcal{G}}_{H}$-orbit of a polystable $G^{\mathbb{R}}$-Higgs bundle we can find another Higgs bundle $\left( \calP, \Phi  \right)$ whose corresponding pair $\left( {{d}_{A}},\Phi  \right)$ satisfies the equation ${{F}_{A}}-\left[ \Phi ,\rho_\calQ \left( \Phi  \right) \right]=0$, and this is unique up to $H^\R$-gauge transformations.

\subsection{Harmonic reductions}
For our purposes in this article, we shall consider the following rescaled version of the system of nonlinear PDE for the data  $(\calP, \rho_{\calQ(R)}, \calQ(R), \Phi)$:
\begin{align}
  {{F}_{D_\calQ(R)}}-R^2\left[ \Phi ,\rho_{\calQ(R)} \left( \Phi  \right) \right] &= 0 \label{resc_Hit_eq:1}\\
  {{\bar{\partial }}_{D_{\calQ(R)}}}\Phi &= 0, \label{resc_Hit_eq:2}
\end{align}
obtained by replacing $\Phi$ with $R \Phi$, for a parameter $R \in \mathbb{R}^+$. We will be referring to this set of equations as \emph{Hitchin's equations with parameter $R$}.

Fix a $G^{\mathbb{R}}$-Higgs bundle $(\calP, \Phi)$ and a parameter $R$ as above. Since the involution $\rho: \mathfrak{g} \to \mathfrak{g}$ is $H^\R$-invariant, a reduction of structure group $\calQ(R)$ induces an involution $\rho_{\calQ(R)}$ of $\text{ad}\calP$ acting trivially on $\text{ad}\calQ(R)$. There is a unique connection $D_{\calQ(R)}$ in $\text{ad}\calQ(R)$ which decomposes into its $(0,1)$ and $(1,0)$-parts as 
\[D_{\calQ(R)} = \bar{\partial}_{\calP} + \partial^{\calQ(R)}_{\calP},\]
where $\partial^{\calQ(R)}_{\calP}:= \rho_{\calQ(R)} \circ \partial_{\calP} \circ \rho_{\calQ(R)}$. We denote the curvature of this connection by $F_{D_{\calQ(R)}} \in \Omega^2(X, \text{ad}\calQ(R))$. We shall call a reduction of structure group $\calQ(R)$ of $\calP$ from $H$ to $H^\R$ such that 
\[F_{D_{\calQ(R)}} -R^2 \left[ \Phi ,\rho_{\calQ(R)} ( \Phi ) \right] =0, \]
a \emph{harmonic reduction} with parameter $R$. 

Given a $G^{\mathbb{R}}$-Higgs bundle  that satisfies Hitchin's equations with parameter $R \in \mathbb{R}^+$, there is a corresponding family of non-unitary connections on $E$ given by the family  
\[\nabla_{\zeta} = \zeta^{-1}R\Phi +D_{\calQ(R)} - \zeta R \rho_{\calQ(R)}(\Phi), \,\, \zeta \in \mathbb{C}^*.\]
This family is flat for all $\zeta \in \mathbb{C}^*$.

\subsection{Magical \texorpdfstring{$\fraksl_2$}{sl2}-triples}
In this subsection, we collect the necessary Lie theoretic background that we shall need in the sequel. The main references for this part are \cite{Cayley} and \cite{CoMc}. 

Let $\frakg$ be a complex simple Lie algebra and $0 \neq e \in \frakg$ be a nilpotent element. By the Jacobson--Morozov theorem, there is a unique $\fraksl_2$-triple $\set{f,h,e}$ that contains $e$ and satisfies the relations
\begin{equation*}
  [h,f]=-2f, \quad [h,e]=2e, \quad [e,f]=h. 
\end{equation*}
Let $\fraks=\ang{f,h,e}$ be the Lie subalgebra generated by the $\fraksl_2$-triple. The Lie algebra $\mathfrak{g}$ admits a decomposition as an $\fraks$-module,
\begin{equation*}
  \frakg = W(0) \oplus \bigoplus_{j=1}^M W(m_j), 
\end{equation*}
where $W(k)$ is isomorphic to a direct sum of $n_k \neq 0$ copies of the $(k+1)$-dimensional irreducible representation of $\fraks$ of highest weight $k$.

On the other hand, for $h \in \frakg$ any semisimple element,  $\ad_h$ induces a characteristic grading on $\frakg$ 
\begin{align*}
  \frakg = \bigoplus_{k \in \Z} \frakg_k, 
\end{align*}
where $X \in \frakg_k$, if $\ad_h(X)=kX$. Moreover, this grading respects the adjoint action of the Lie algebra:
\begin{align*}
  [\frakg_k, \frakg_l] \subset \frakg_{k+l}. 
\end{align*}

If $\set{f,h,e}$ is an $\fraksl_2$-triple, then $\ad_f, \ad_e$ are graded linear maps of degrees $-2$ and $2$ respectively.

Let now $W(k)_l = W(k) \cap \frakg_l$ denote the $l$-eigenspace of $\ad_h$ in the weight module $W(k)$. Let $\sigma: \frakg \to \frakg$ be the complex linear involution given by
\begin{equation*}
  \sigma(x) =
  \begin{cases}
    x, & \; \text{if} \; x \in W(0) \\
    (-1)^{i+1}x, & \; \text{if} \; x \in W(k)_{k-2i}, \; \text{ for all } k \in \set{2m_j}_{j=1}^M, \quad 0 \le i \le k. 
  \end{cases}
\end{equation*}
Following \cite{Cayley} we set the next definition:
\begin{definition}\label{def:magical}
  For a nilpotent element $e \in \mathfrak{g}$ of a complex simple Lie algebra $\mathfrak{g}$, an $\mathfrak{sl}_2$-triple $\{f,h,e\}$ and an involution $\sigma:\mathfrak{g} \to \mathfrak{g}$ as above, the triple $\{f,h,e\}$ and the element $e$ will be called \emph{magical}, if $\sigma$ is a Lie algebra homomorphism. We also refer to the involution $\sigma$ as \emph{magical} if it is given by a magical triple $\set{f,h,e}$.
\end{definition}

\begin{remark} Note that if $\set{f,h,e}$ is a magical $\fraksl_2$-triple, then all the eigenvalues of $\ad_h$ are even due to the classification theorem of magical $\fraksl_2$-triples \cite[Theorem 3.1]{Cayley}. In particular, all the highest weights in the decomposition of $\frakg$ are even, hence from now on we will write $k=2m_j$ for the highest weights. In general, an $\fraksl_2$-triple that has this property is called \textit{even}. 
\end{remark}

\subsection{Real forms}

Fix a real form $\tau$ on $\frakg$ such that $\rho:=\tau\sigma=\sigma\tau$ is a compact real form on $\frakg$ and such that $\tau$ restricts to the magical triple as follows: 
\begin{align*}
  \tau(h)=-h, \quad \tau(e)=f, \quad \tau(f)=e. 
\end{align*}

\begin{definition} Let $\frakg^\R := \frakg^\tau$ denote the fixed point set of $\tau$ as above. We will refer to both $\tau$ and $\frakg^\R$ as the \emph{canonical real form} associated to the magical triple.
\end{definition}

Since $\rho$ is a compact real form, the $\pm 1$-eigenspace decomposition of $\sigma|_{\frakg^\R}=\rho|_{\frakg^\R}$ gives a Cartan decomposition on $\frakg^\R$:
\begin{align*}
  \frakg^\R = \frakh^\R \oplus \frakm^\R. 
\end{align*}
It then makes sense to refer to the $\pm 1$-eigenspace decomposition of $\sigma$ on $\frakg$ as the \emph{complexified Cartan decomposition} of the Cartan involution $\sigma$ for the real form $\tau$:
\begin{align*}
  \frakg = \frakh \oplus \frakm. 
\end{align*}

By definition of the magical involution $\sigma$, we have further decompositions
\begin{align*}
  \frakh = \frakc \oplus \bigoplus_{j=1}^{M} \bigoplus_{i=1}^{m_j} \ad_f^{2i-1}(V_{2m_j}), \quad \frakm = \bigoplus_{j=1}^{M} \bigoplus_{i=0}^{m_j} \ad_f^{2i}(V_{2m_j}). 
\end{align*}


\subsection{Slodowy slices}

Let $V:=\ker(\ad_e)$ denote the centralizer of $e$ in $\frakg$. It decomposes as a sum of highest weight spaces
\begin{equation*}
  V = W(0) \oplus \bigoplus_{j=1}^M W(2m_j)_{2m_j}. 
\end{equation*}
For simplicity we write $V_k=W(k)_k$ to mean the highest weight spaces for each of the weights $k=2m_j$. For $k=0$, since $V_0$ is the Lie subalgebra that centralizes $\fraks$, we also denote it by
\begin{align*}
  \frakc:=V_0=W(0)=Z_{\frakg}(\fraks). 
\end{align*}
Note that $\frakc$ is a reductive Lie subalgebra, because the centralizer of a reductive group is reductive. The next proposition further characterizes the centralizer $\mathfrak{c}$.

\begin{proposition}\cite[Proposition 2.2]{Cayley}
  The centralizer $V$ of $e$ is a subalgebra in $\frakg$. Moreover, $\frakc$ is a reductive subalgebra and $\bigoplus_{j=1}^M V_{2m_j}$ is a nilpotent subalgebra. 
\end{proposition}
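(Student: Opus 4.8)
The plan is to establish the three assertions in turn, using the characteristic grading $\frakg = \bigoplus_{k \in \Z} \frakg_k$ of $\ad_h$ as the main bookkeeping device, and to note at the outset that all $\frakg_k$ vanish for $k$ outside a finite window (bounded by the top weight $\max_j 2m_j$). That $V = \ker(\ad_e)$ is a subalgebra is immediate from the Jacobi identity: if $[e,x]=0$ and $[e,y]=0$, then $[e,[x,y]] = [[e,x],y] + [x,[e,y]] = 0$, so $[x,y] \in V$. No grading is needed for this first claim.

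For the reductivity of $\frakc = V_0 = Z_{\frakg}(\fraks)$, the cleanest route is the group-theoretic one already hinted at in the statement. Let $G$ be the adjoint group of $\frakg$ and $S \subset G$ the connected subgroup with Lie algebra $\fraks \cong \fraksl_2$; then $\frakc = \frakg^{S} = \Lie(Z_G(S))$, and since $S$ is reductive and $G$ is reductive, the centralizer $Z_G(S)$ is reductive, whence so is its Lie algebra $\frakc$. Alternatively one can argue intrinsically with the Killing form $B$ of $\frakg$: because $\fraks$ is semisimple it acts completely reducibly, and by $\fraks$-invariance of $B$ together with Schur's lemma the trivial isotypic summand $W(0)=\frakc$ is $B$-orthogonal to every nontrivial summand $W(2m_j)$ (a trivial module admits no nonzero equivariant pairing with a sum of irreducibles of positive highest weight). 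Hence $B$ restricts nondegenerately to $\frakc$, and nondegeneracy of the ambient Killing form on a subalgebra of a semisimple Lie algebra forces that subalgebra to be reductive in $\frakg$, hence reductive.

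For the third claim, set $\frakn := \bigoplus_{j=1}^M V_{2m_j}$. Each $V_{2m_j} = W(2m_j)_{2m_j}$ lies in $\frakg_{2m_j}$ with $2m_j > 0$, so $\frakn \subset \frakg_{>0} := \bigoplus_{k>0}\frakg_k$, while $\frakc = V_0$ is exactly the degree-zero part of $V$. Given $x,y \in \frakn$, the bracket $[x,y]$ lies in $V$ (first part) and in $\frakg_{>0}$ (since $[\frakg_k,\frakg_l]\subset\frakg_{k+l}$ and both factors have positive degree), hence in $V \cap \frakg_{>0} = \frakn$; thus $\frakn$ is a subalgebra. Nilpotency is then automatic: iterated brackets strictly increase the $\ad_h$-degree while the grading is bounded above, so the lower central series of $\frakn$ terminates. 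Equivalently, $\frakn$ is a subalgebra of the positive part $\frakg_{>0}$ of a finite $\Z$-grading, which is always nilpotent.

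The genuinely structural input is the reductivity of $\frakc$, and the point to be careful about there is the orthogonality of the isotypic components (or, in the group-theoretic phrasing, the general theorem that centralizers of reductive subgroups in reductive groups are reductive). The remaining two assertions are formal consequences of the Jacobi identity and of the boundedness of the $\ad_h$-grading, so I expect no real obstacle in those, only the verification that $\frakn$ is closed under bracket, which is handled precisely by intersecting the subalgebra $V$ with the positive part of the grading.
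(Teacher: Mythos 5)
Your proof is correct and takes essentially the same route as the paper, which gives no independent argument beyond citing \cite{Cayley} and recording the one structural fact you also rely on --- that the centralizer of a reductive subgroup of a reductive group is reductive --- while the other two claims follow, exactly as you show, from the Jacobi identity and the fact that $V\cap\frakg_{>0}=\bigoplus_{j=1}^M V_{2m_j}$ sits in the bounded positive part of the $\ad_h$-grading. Your alternative Killing-form argument for the reductivity of $\frakc$ (the trivial isotypic summand is orthogonal to the nontrivial ones, so $B|_{\frakc}$ is nondegenerate) is a sound, self-contained variant but not needed to match the paper.
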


The affine space $f+V$ is a slice of the adjoint action of $G$ on $\frakg$ through $f$, called a \emph{Slodowy slice}. The centralizer $\frakc$ preserves the Slodowy slice.

\subsection{Cayley real forms}

Similar to the highest weight spaces, we pay special attention to the zero weight spaces: let $Z_k=W(k)_0$ for the highest weights $k=2m_j$ and $Z_0=\frakc$. In particular, the zero eigenspace of $\ad_h$ can be decomposed as
\begin{equation*}
  \frakg_0 = \frakc \oplus \bigoplus_{j=1}^M Z_{2m_j}. 
\end{equation*}
For the zero eigenspace $\mathfrak{g}_0$, one has the next proposition.
\begin{proposition}\cite[Proposition 2.7]{Cayley}
  Let $\theta: \frakg_0 \to \frakg_0$ be the vector space involution given by 
  \begin{align*}
    \theta(x)= \begin{cases}
      x, & \; \text{if} \; x \in \frakc \\
      -x, & \; \text{otherwise}. 
    \end{cases}
  \end{align*}
  Then $\theta$ is a Lie algebra involution on $\frakg_0$, which will be called the \emph{Cayley involution} associated to the magical triple $\set{f,h,e}$. 
\end{proposition}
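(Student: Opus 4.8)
The plan is to show that the $(\pm1)$-eigenspace decomposition attached to $\theta$, namely the fixed space $\frakc$ and the anti-fixed space $\bigoplus_{j=1}^{M}Z_{2m_j}$, is the eigenspace decomposition of a genuine Lie algebra involution. Since $\theta^2=\mathrm{id}$ is immediate, $\theta$ is an involutive automorphism exactly when the three bracket relations
\[
[\frakc,\frakc]\subseteq\frakc,\qquad [\frakc,Z_{2m_j}]\subseteq Z_{2m_j},\qquad [Z_{2m_i},Z_{2m_j}]\subseteq\frakc
\]
hold for all $i,j$. I would prove the first two directly and reserve the real work for the third. The first relation is immediate: $\frakc=Z_{\frakg}(\fraks)$ is the centralizer of the subalgebra $\fraks=\ang{f,h,e}$, hence a (reductive) subalgebra. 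For the second, any $c\in\frakc$ commutes with $\ad_f,\ad_h,\ad_e$, so $\ad_c$ is an endomorphism of $\frakg$ as an $\fraks$-module; it therefore preserves each isotypic summand $W(2m_j)$, and, commuting with $\ad_h$, preserves the weight spaces as well, in particular $Z_{2m_j}=W(2m_j)_0$. Thus $[\frakc,Z_{2m_j}]\subseteq Z_{2m_j}$.

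The heart of the matter is the third relation, $[Z_{2m_i},Z_{2m_j}]\subseteq\frakc$. Since both factors lie in $\frakg_0$ and $[\frakg_0,\frakg_0]\subseteq\frakg_0$, the bracket already lands in $\frakg_0=\frakc\oplus\bigoplus_l Z_{2m_l}$, so it suffices to kill its component in each nontrivial summand $Z_{2m_l}=W(2m_l)_0$ with $m_l\ge1$. Here I would invoke the magical hypothesis that $\sigma$ is a Lie algebra homomorphism. On the weight-zero space $W(2m_l)_0$ the involution $\sigma$ acts by the scalar $(-1)^{m_l+1}$, and by $+1$ on $\frakc=W(0)$; applying $\sigma$ to $[x,y]$ with $x\in Z_{2m_i}$, $y\in Z_{2m_j}$ yields $\sigma[x,y]=(-1)^{m_i+m_j}[x,y]$. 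Comparing eigenvalues summand by summand forces every isotypic component $W(2m_l)$ appearing in $[x,y]$ to obey the parity selection rule $m_l\equiv m_i+m_j+1\pmod 2$.

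The main obstacle is that this parity rule does not by itself annihilate all nontrivial summands --- it still permits, for instance, a $Z_{2m}$-component inside $[Z_{2m},Z_{2m}]$ --- so the genuinely hard step is to rule out these surviving diagonal contributions. I would attack this by feeding the selection rule into the Clebsch--Gordan decomposition of $W(2m_i)\otimes W(2m_j)$ together with the antisymmetry of the bracket, which restricts which irreducibles can carry a nonzero map into a given multiplicity space, and, where a uniform argument is not available, by appealing to the explicit description of $\frakc$ and of the spaces $Z_{2m_j}$ supplied by the classification of magical $\fraksl_2$-triples. Establishing that every nontrivial isotypic component is forced to vanish, so that $[Z_{2m_i},Z_{2m_j}]\subseteq\frakc$, is the crux on which the whole statement depends.
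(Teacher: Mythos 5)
Your reduction of the statement to the three bracket relations is correct, and your first two relations together with the $\sigma$-parity computation ($\sigma$ acts on $Z_{2m_j}=W(2m_j)_0$ by $(-1)^{m_j+1}$, hence $\sigma[x,y]=(-1)^{m_i+m_j}[x,y]$ for $x\in Z_{2m_i}$, $y\in Z_{2m_j}$) are all sound. But the proof stops exactly at the crux you yourself identify: you never actually establish $[Z_{2m_i},Z_{2m_j}]\subseteq\frakc$, you only list strategies for doing so, and neither strategy is likely to close the gap. The Clebsch--Gordan-plus-antisymmetry idea founders on the multiplicity spaces: writing $W(2m)\cong V_{2m}\otimes U$ with $U$ the multiplicity space, one has
\begin{align*}
  \Lambda^2(V_{2m}\otimes U)\;\cong\;\Lambda^2 V_{2m}\otimes S^2U\;\oplus\;S^2V_{2m}\otimes\Lambda^2 U,
\end{align*}
so as soon as $\dim U>1$ the antisymmetry of the bracket imposes no parity constraint on which constituents $V_{2m_l}$ can occur, and your selection rule is left intact but insufficient. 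Falling back on the classification of magical triples is also problematic: in the cited source the classification is established downstream of this structural proposition, and in any case ``check the finitely many cases'' is a plan, not an argument.

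The missing idea is a second, independent parity constraint coming from the $\fraksl_2$ Weyl element $w=\exp(\ad_e)\exp(-\ad_f)\exp(\ad_e)$. This is a Lie algebra automorphism of $\frakg$ sending $\frakg_\mu$ to $\frakg_{-\mu}$, so it preserves $\frakg_0$; it acts as the identity on $\frakc=W(0)$ (trivial $\fraks$-module) and by $(-1)^{m_j}$ on each zero-weight space $Z_{2m_j}$ (the Weyl element acts on the zero-weight line of $V_{2m}$ by $(-1)^m$). Applying $w$ to $[x,y]$ gives the selection rule $m_l\equiv m_i+m_j \pmod 2$ for a surviving $Z_{2m_l}$-component, which contradicts your $\sigma$-rule $m_l\equiv m_i+m_j+1\pmod 2$ on \emph{every} nontrivial summand; hence $[Z_{2m_i},Z_{2m_j}]\subseteq\frakc$ and you are done. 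More elegantly, observe that $w\circ\sigma$ is a composition of automorphisms (here is where magicality of $\sigma$ enters) which preserves $\frakg_0$ and acts there by $+1$ on $\frakc$ and by $(-1)^{2m_j+1}=-1$ on each $Z_{2m_j}$, i.e.\ $\theta=(w\circ\sigma)|_{\frakg_0}$, so $\theta$ is an automorphism, and it is an involution since $\theta^2=\mathrm{id}$ by construction. Note that the paper under review offers no proof of this statement --- it is quoted from \cite[Proposition 2.7]{Cayley} --- so the comparison here is with the argument of that source, which your sketch approaches but does not complete.
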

\begin{remark}
  Note that for the involution $\theta$ above, it holds that  $\theta \neq \sigma|_{\frakg_0}$, because $\theta(h)=-h$ and $\sigma(h)=h$.
\end{remark}

\begin{lemma}
  Let $\mathfrak{g}$ be a complex simple Lie algebra and fix a magical triple $\set{f,h,e}$. Let $\rho=\tau\sigma=\sigma\tau$ be the compact real form on $\frakg$ associated to the magical involution $\sigma$ and the canonical real form $\tau$ associated to the magical triple. Then $\rho$ preserves the zero eigenspace $\frakg_0$ of $\ad_h$. 
\end{lemma}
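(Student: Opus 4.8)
The plan is to prove the statement by treating the two involutions $\sigma$ and $\tau$ separately, showing that each one preserves $\frakg_0$; the conclusion for $\rho = \tau\sigma$ is then immediate by composition.

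First I would observe that $\sigma$ preserves the entire characteristic grading, and in particular $\frakg_0$. Indeed, by its very definition the magical involution acts on each weight space $W(k)_l$ as a scalar (namely $(-1)^{i+1}$ when $l = k - 2i$) and as the identity on $W(0) = \frakc$. Hence $\sigma$ stabilizes every $W(k)_l$, so it stabilizes each eigenspace $\frakg_l = \bigoplus_k W(k)_l$ of $\ad_h$. Using the decomposition $\frakg_0 = \frakc \oplus \bigoplus_{j=1}^M Z_{2m_j}$ with $Z_{2m_j} = W(2m_j)_0$, this gives $\sigma(\frakg_0) = \frakg_0$ at once.

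Next I would analyze how $\tau$ interacts with the grading. Since $\tau$ is a conjugation it is antilinear and satisfies $\tau([A,B]) = [\tau(A),\tau(B)]$. For $X \in \frakg_l$, applying $\tau$ to $[h,X] = lX$ and using $\tau(h) = -h$ together with $\bar{l} = l$ (as $l \in \Z$) yields $[h, \tau(X)] = -l\,\tau(X)$. Thus $\tau(\frakg_l) = \frakg_{-l}$, so $\tau$ reverses the grading and in particular preserves $\frakg_0$. Combining the two computations, $\rho(\frakg_0) = \tau(\sigma(\frakg_0)) = \tau(\frakg_0) = \frakg_0$.

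The argument is essentially formal, so I do not expect a deep obstacle; the one place demanding care is the bookkeeping of linearity conventions---$\sigma$ is complex linear whereas $\tau$ and $\rho$ are antilinear---together with the observation that $\tau$ sends $\frakg_l$ to $\frakg_{-l}$ rather than fixing each $\frakg_l$. Once these are correctly tracked, the preservation of $\frakg_0$ follows because reversing the grading fixes precisely the degree-zero piece.
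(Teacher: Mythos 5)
Your proof is correct and follows essentially the same route as the paper's: the paper likewise notes that $\sigma$ acts on $\frakg_0$ by $\pm 1$ (so preserves it) and that $\tau(h)=-h$ forces preservation of the zero eigenspace. Your version merely spells out the antilinearity computation showing $\tau(\frakg_l)=\frakg_{-l}$, which the paper leaves implicit.
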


\begin{proof}
  By definition of the magical involution, $\sigma$ acts on $\frakg_0$ by $\pm 1$, so $\sigma$ preserves $\frakg_0$. Moreover, by assumption the canonical real form $\tau$ takes $h$ to $-h$, so the zero eigenspace is preserved. Hence $\rho(\frakg_0) = \frakg_0$. 
\end{proof}

We may restrict $\rho$ to $\frakg_0$ to obtain a compact real form on $\frakg_0$. Now, let
\begin{align*}
  \tau_\calC := \theta \circ \rho|_{\frakg_0}. 
\end{align*}
We then call $\tau_\calC$ and its fixed point set $\frakg_\calC^\R := \frakg_0^{\tau_\calC}$, the \emph{Cayley real form} associated to the magical triple $\set{f,h,e}$.

\subsection{Magical triples as principal triples}

Fix a magical triple $\fraks = \ang{f,h,e}$ in a complex simple Lie algebra $\mathfrak{g}$, and let $\frakc=Z_{\frakg}(\fraks)$ be its centralizer. Let $\frakg(e)=Z_{\frakc^\perp}(\frakc)$ be the double centralizer of $\fraks$ complementary to the centralizer $\frakc$, that is, 
\begin{align*}
  Z_{\frakg}(Z_{\frakg}(\fraks)) = Z_\frakg(\frakc) = Z_{\frakc}(\frakc) \oplus \frakg(e). 
\end{align*}
A nilpotent element $e \in \mathfrak{g}$ is called \emph{principal} if its centralizer has dimension $\text{rk}(\mathfrak{g})$; then an $\mathfrak{sl}_2$-triple where such $e$ embeds into is called thereupon \emph{principal}. 
\begin{lemma}
  Let $\fraks$ be a magical $\fraksl_2$-triple in $\frakg$ with $\frakc = Z_\frakg(\fraks) = 0$, then $\fraks$ is principal. 
\end{lemma}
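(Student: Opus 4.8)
The plan is to reduce the statement to showing that the neutral element $h$ of the triple is regular, and then to extract regularity from the magical structure via the Cayley involution. Recall that $e$ is principal exactly when $\dim Z_{\frakg}(e) = \rk(\frakg)$, and that $Z_{\frakg}(e)$ is the centralizer $V$, whose dimension equals the number of irreducible summands in the $\fraks$-decomposition of $\frakg$. Since a magical triple is even, every such summand also contributes exactly one dimension to the zero eigenspace $\frakg_0$ of $\ad_h$; concretely, with $\frakc = W(0) = 0$ the decompositions recorded above read $V = \bigoplus_{j} V_{2m_j}$ and $\frakg_0 = \bigoplus_{j} Z_{2m_j}$ with $\dim V_{2m_j} = \dim Z_{2m_j}$, so that $\dim Z_{\frakg}(e) = \dim \frakg_0$. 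It therefore suffices to prove $\dim \frakg_0 = \rk(\frakg)$, i.e. that $\frakg_0 = Z_{\frakg}(h)$ is a Cartan subalgebra.

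The key step, where magicality enters, is to show that $\frakg_0$ is abelian. Here I would invoke the Cayley involution $\theta$ on $\frakg_0$, which is $+1$ on $\frakc$ and $-1$ on the complementary summands $\bigoplus_j Z_{2m_j}$. Under the hypothesis $\frakc = 0$ these summands exhaust $\frakg_0$, so $\theta = -\Id$ on $\frakg_0$. Since $\theta$ is a genuine Lie algebra involution, for any $x,y \in \frakg_0$ one has $-[x,y] = \theta([x,y]) = [\theta x, \theta y] = [x,y]$, forcing $[x,y]=0$. Hence $\frakg_0$ is abelian.

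To finish I would use that $h$ is semisimple: choosing a Cartan subalgebra $\frakt$ with $h \in \frakt$ gives $\frakt \subseteq Z_{\frakg}(h) = \frakg_0$. As $\frakg_0$ is abelian and contains the maximal toral subalgebra $\frakt$, every element of $\frakg_0$ commutes with $\frakt$, so $\frakg_0 \subseteq Z_{\frakg}(\frakt) = \frakt$, whence $\frakg_0 = \frakt$ and $\dim \frakg_0 = \rk(\frakg)$. Combined with the first paragraph this yields $\dim Z_{\frakg}(e) = \rk(\frakg)$, so $e$ is principal.

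The only genuinely delicate point is the middle paragraph: the entire weight of the hypothesis is carried by the fact that $\frakc=0$ collapses the Cayley involution to $-\Id$, and that $\theta$ being a Lie algebra homomorphism then forces $\frakg_0$ to be abelian. Everything else is standard structure theory (evenness supplies the dimension matching $\dim Z_{\frakg}(e)=\dim\frakg_0$, and a maximal toral subalgebra is self-centralizing). I would take care to confirm that the decomposition $\frakg_0 = \frakc \oplus \bigoplus_j Z_{2m_j}$ and the sign conventions for $\theta$ are exactly as recorded above, since the argument is otherwise a two-line computation.
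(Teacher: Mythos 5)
Your proof is correct and takes essentially the same route as the paper: evenness of the magical triple reduces the claim to $\dim\frakg_0=\rk(\frakg)$, abelianness of $\frakg_0$ follows because the Cayley involution collapses to $-\Id$ when $\frakc=0$ (this is precisely the content the paper delegates to \cite[Proposition 2.7]{Cayley}), and $\frakg_0$ is then identified as a Cartan subalgebra containing $h$. Your final step, via self-centralization of a Cartan subalgebra $\frakt\subseteq\frakg_0$, is if anything slightly more careful than the paper's appeal to $\frakg_0$ being a maximal abelian subalgebra containing $h$.
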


\begin{proof}
  A magical $\fraksl_2$ is even, thus every highest weight vector corresponds to a zero eigenvector of $\ad_h$. It is sufficient to show that $\dim \ker(\ad_e) = \dim \frakg_0 = \rk(\frakg)$. Using the magical involution from Definition \ref{def:magical}, it can be shown that when $\frakc = 0$, the subalgebra $\frakg_0$ is abelian (\cite[Proposition 2.7]{Cayley}. Hence $\frakg_0$ must be a maximal abelian subalgebra containing $h$, which has dimension $\rk(\frakg)$. Alternatively, the statement can be shown due to the classification of magical $\fraksl_2$-triples \cite[Theorem 3.1]{Cayley}. 
\end{proof}

It is clear that $\fraks$ centralizes $\frakc$ and belongs to the complement $\frakg(e)$. Moreover, $\fraks$ remains a magical $\fraksl_2$ in the reductive subalgebra $\frakg(e)$. Then $Z_{\frakg(e)}(\fraks)=\frakc \cap \frakg(e)=0$, and one can show that the magical triple is principal in $\frakg(e)$ (\cite[Proposition 4.5]{Cayley}).

The following characterization of the Cayley real forms will be also useful:

\begin{proposition}\cite[Propositions 4.2 and 4.8]{Cayley}
  Let $\set{f,h,e}$ be a magical triple. The zero eigenspace of $\ad_h$ admits a decomposition
  \begin{align*}
    \frakg_0 = \frakg_0^s \oplus (\frakg_0 \cap \frakg(e)) = \frakg_0^s \oplus \C^{r(e)}, 
  \end{align*}
  where $\frakg_0^s$ is a simple Lie algebra, and $r(e) := \rk(\frakg(e))$.
  In particular, the Cayley real form inherits the decomposition into simple and abelian subalgebras as
  \begin{align*}
    \frakg_\calC^\R = \frakg_\calC^{\R,s} \oplus \R^{r(e)}. 
  \end{align*}
\end{proposition}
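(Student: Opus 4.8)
The plan is to realize $\frakg_0 = \ker(\ad_h)$ as the reductive Levi factor of the parabolic $\bigoplus_{k\ge 0}\frakg_k$ cut out by the even grading, and then to match its canonical reductive splitting into center and derived subalgebra with the two summands in the statement. First I would treat the abelian factor. Since $\fraks$ sits inside the double centralizer $\frakg(e)$ as a principal $\fraksl_2$-triple with vanishing centralizer $Z_{\frakg(e)}(\fraks)=\frakc\cap\frakg(e)=0$ (as recorded just above the statement), the argument of the preceding Lemma for the case $Z_\frakg(\fraks)=0$ applies inside $\frakg(e)$ and shows that its degree-zero part $\frakg(e)_0=\frakg_0\cap\frakg(e)$ is a Cartan subalgebra of $\frakg(e)$, with the principal $h$ regular there. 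Hence $\frakg_0\cap\frakg(e)$ is abelian of dimension $r(e)=\rk(\frakg(e))$, giving $\frakg_0\cap\frakg(e)\cong\C^{r(e)}$. I would then verify that this Cartan is precisely the center $Z(\frakg_0)$: its elements centralize $\frakc\subset\frakg_0$ because $\frakg(e)\subset Z_\frakg(\frakc)$, and, reading off the weight decomposition $\frakg_0=\frakc\oplus\bigoplus_{j=1}^M Z_{2m_j}$ against the $\ad_h$-grading, one checks they also annihilate the blocks $Z_{2m_j}$; the resulting inclusion $\frakg_0\cap\frakg(e)\subseteq Z(\frakg_0)$ becomes an equality by a dimension count once the complementary factor is pinned down.

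The decisive step is the simplicity of the complementary semisimple factor $\frakg_0^s:=[\frakg_0,\frakg_0]$. Evenness alone only guarantees that $\frakg_0$ is reductive and that $\frakg_0^s$ is semisimple, so the fact that it is a single simple ideal is a genuine consequence of the magical hypothesis rather than of the grading. The cleanest route I see is to invoke the classification of magical $\fraksl_2$-triples in \cite[Theorem 3.1]{Cayley}: these are exhausted by an explicit list (the principal triples in split forms together with the families underlying the known $\Theta$-positive structures), and for each entry the Levi $\frakg_0^s$ can be computed outright and is either simple or, in the split/principal case where $\frakc=0$ and $\frakg_0$ is a full Cartan, trivial. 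I would keep this case analysis as the backbone; a more conceptual alternative would be to argue that the magical involution $\sigma$, which acts as $+1$ exactly on $\frakc$ and as $-1$ on the remaining degree-zero blocks, forces the Dynkin diagram of $\frakg_0^s$ to be connected, but I would present that only as a cross-check.

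For the Cayley real form I would transport the complex decomposition through $\tau_\calC=\theta\circ\rho|_{\frakg_0}$. The point is that both constituents are characteristic: $\rho|_{\frakg_0}$ is a compact real form, in particular a conjugate-linear automorphism of $\frakg_0$, so it preserves the derived subalgebra $\frakg_0^s$ and the center $Z(\frakg_0)=\frakg_0\cap\frakg(e)$; and the Cayley involution $\theta$ is a Lie algebra automorphism of $\frakg_0$, hence preserves the same two characteristic ideals. Therefore $\tau_\calC$ respects the splitting, and taking fixed points gives $\frakg_\calC^\R=(\frakg_0^s)^{\tau_\calC}\oplus(\frakg_0\cap\frakg(e))^{\tau_\calC}$. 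The first summand is a real form of the simple complex algebra $\frakg_0^s$, namely the simple real algebra $\frakg_\calC^{\R,s}$, while the second is the fixed set of a conjugate-linear involution of the abelian $\C^{r(e)}$, namely $\R^{r(e)}$; this yields $\frakg_\calC^\R=\frakg_\calC^{\R,s}\oplus\R^{r(e)}$.

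The main obstacle is exactly the simplicity of $\frakg_0^s$ isolated in the second paragraph: disentangling the contribution of the magical involution from that of mere evenness, and organizing either the uniform diagram-connectivity argument or the classification bookkeeping so that no case is missed. By contrast, the abelian-factor computation and the descent to the Cayley real form are essentially formal once the reductive structure of $\frakg_0$ and the characteristic nature of its center and derived subalgebra are in hand.
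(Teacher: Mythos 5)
The paper gives no proof of this proposition: it is quoted verbatim from \cite[Propositions 4.2 and 4.8]{Cayley}, so there is no internal argument to compare yours against, and your proposal should be judged as a reconstruction of the cited source. On that footing your outline is sound and follows essentially the same levers the source has available. The abelian factor is handled correctly: since $\fraks$ is principal in $\frakg(e)$ (the paper records this via \cite[Proposition 4.5]{Cayley}), $h$ is regular there and $\frakg_0\cap\frakg(e)=Z_{\frakg(e)}(h)$ is a Cartan subalgebra of dimension $r(e)$; and the descent to the real form is correct and complete --- both $[\frakg_0,\frakg_0]$ and $Z(\frakg_0)$ are characteristic under the linear automorphism $\theta$ and the conjugate-linear $\rho$, so $\tau_\calC$ respects the splitting, a real form of a simple complex algebra is simple, and the fixed set of a conjugate-linear involution on $\C^{r(e)}$ is $\R^{r(e)}$. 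Two steps, however, remain deferred rather than proven, and you correctly flag at least one of them. First, the inclusion $\frakg_0\cap\frakg(e)\subseteq Z(\frakg_0)$: centralizing $\frakc$ is immediate from $\frakg(e)\subseteq Z_\frakg(\frakc)$, and $h$ itself is trivially central in $\frakg_0=\ker(\ad_h)$, but that the rest of the Cartan of $\frakg(e)$ annihilates the block $\frakm_\calC\subset Z_{2m_\calC}$ is precisely the nontrivial content here, and your ``one checks'' conceals it; note also that your dimension-count closure of the inclusion to an equality presupposes that the complementary factor has trivial center, so the simplicity/semisimplicity step must logically come first. Second, the simplicity of $\frakg_0^s$, which you outsource to the classification \cite[Theorem 3.1]{Cayley}: this is legitimate and is in the spirit of how the source argues (several structural statements there, including the evenness used elsewhere in this paper, rest on that classification), but the case-by-case bookkeeping \emph{is} then the proof, and your proposed conceptual alternative via connectivity of the Dynkin diagram under $\sigma$ is, as stated, an unsubstantiated heuristic. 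You also rightly note the degenerate split case $\frakc=0$, where $\frakg(e)=\frakg$, $\frakg_0$ is a full Cartan, and $\frakg_0^s=0$, which the proposition's wording silently allows.
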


\begin{lemma}
  Let $\frakc^\R = \frakc \cap \frakg^\R$ be the real form in $\frakc$ given by the canonical real form. Then the simple part of the Cayley real form admits a Cartan decomposition
  \begin{align*}
    \frakg_\calC^{\R,s} = \frakc^\R \oplus \frakm_\calC^{\R}, 
  \end{align*}
  where $\frakc^\R$ is the maximal compact subalgebra, and $\frakm_\calC^{\R}$ is the complement subspace. 
\end{lemma}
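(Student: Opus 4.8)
The plan is to exhibit the Cayley involution $\theta$ as the Cartan involution realizing the asserted decomposition. Concretely, I would show that $\theta$ restricts to a Cartan involution of the simple real Lie algebra $\frakg_\calC^{\R,s}$ whose $+1$-eigenspace is $\frakc^\R$ and whose $-1$-eigenspace is the complement $\frakm_\calC^\R$. The whole argument rests on the compatibility of $\theta$ with the compact conjugation $\rho$, together with a bookkeeping of where the various pieces of $\frakg_0$ sit relative to $\theta$.

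First I would check that $\theta$ commutes with $\rho|_{\frakg_0}$. Since $\rho=\tau\sigma=\sigma\tau$, it suffices to show that both $\sigma$ and $\tau$ preserve the refined zero-weight decomposition $\frakg_0=\frakc\oplus\bigoplus_j Z_{2m_j}$ and act compatibly with $\theta$. By definition of the magical involution, $\sigma$ acts by the scalar $+1$ on $\frakc=W(0)$ and by $(-1)^{m_j+1}$ on $Z_{2m_j}=W(2m_j)_0$, so it commutes with $\theta$. For $\tau$, note that $\tau$ permutes $\{e,f\}$ and sends $h\mapsto -h$, hence preserves $\fraks$ and therefore each isotypic component $W(k)$; since $\tau(h)=-h$ it maps $W(k)_l$ to $W(k)_{-l}$, and in particular preserves each zero-weight space $W(k)_0$. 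Thus $\tau$ preserves $\frakc$ and every $Z_{2m_j}$, so it too commutes with $\theta$. Consequently $[\theta,\rho]=0$ on $\frakg_0$, and $\theta$ preserves the real form $\frakg_\calC^\R=\frakg_0^{\theta\rho}$. Restricting to the simple ideal $\frakg_0^s$ (preserved by $\sigma$, $\tau$ and $\theta$), the standard correspondence between real forms and complex-linear involutions commuting with a fixed compact real form shows that $\theta|_{\frakg_\calC^{\R,s}}$ is a Cartan involution: for $X$ in the $+1$-eigenspace one has $\rho X=\theta\cdot\theta\rho\,X=\theta(\tau_\calC X)=\theta X=X$, while for $Y$ in the $-1$-eigenspace $\rho(iY)=-i\theta\tau_\calC Y=iY$, so the $+1$-eigenspace together with $i$ times the $-1$-eigenspace recovers the compact real form $(\frakg_0^s)^\rho$.

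It remains to identify the two eigenspaces. On $\frakc$ the involution $\sigma$ is the identity, so $\rho|_\frakc=\tau|_\frakc$ and hence $\frakc^\R=\frakc^\tau=\frakc^\rho$ is a compact real form of $\frakc$, in particular a compact subalgebra; and since $\theta|_\frakc=\id$, the conjugation $\theta\rho$ agrees with $\rho$ on $\frakc$, so $\frakc^\R$ is precisely the $+1$-eigenspace of $\theta$ inside $\frakg_\calC^\R$. To see that $\frakc^\R$ lies in $\frakg_\calC^{\R,s}$ and is its full maximal compact subalgebra, I would show the abelian factor $\C^{r(e)}=\frakg_0\cap\frakg(e)$ lies in the $-1$-eigenspace $\bigoplus_j Z_{2m_j}$. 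Intersecting the centralizer decomposition $Z_\frakg(\frakc)=Z(\frakc)\oplus\frakg(e)$ with $\frakg_0$ gives $Z_{\frakg_0}(\frakc)=Z(\frakc)\oplus(\frakg(e)\cap\frakg_0)$; since $\theta$ is an automorphism fixing $\frakc$ it preserves $Z_{\frakg_0}(\frakc)$, whose $+1$-eigenspace is $Z_{\frakg_0}(\frakc)\cap\frakc=Z(\frakc)$, forcing the complementary $-1$-eigenspace to be exactly $\frakg(e)\cap\frakg_0$. Hence $\C^{r(e)}$ is $\theta$-anti-invariant, so $\frakc=(\frakg_0)^{+\theta}$ has no component in the center, giving $\frakc\subseteq\frakg_0^s$ and $\frakc^\R\subseteq\frakg_\calC^{\R,s}$ as its maximal compact subalgebra, with $\frakm_\calC^\R:=(\frakg_\calC^{\R,s})^{-\theta}$ the non-compact complement.

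The main obstacle I anticipate is precisely the commutation $[\theta,\rho]=0$ together with the placement of the abelian factor: the former requires the $\tau$-invariance of the fine decomposition $\frakg_0=\frakc\oplus\bigoplus_j Z_{2m_j}$ (resting on $\tau$ preserving the isotypic and zero-weight structure), while the latter is what guarantees that $\frakc^\R$ is the entire maximal compact subalgebra of the simple part rather than a proper piece of it. Once these two structural facts are in place, the identification of $\theta|_{\frakg_\calC^{\R,s}}$ as the Cartan involution with decomposition $\frakc^\R\oplus\frakm_\calC^\R$ is formal.
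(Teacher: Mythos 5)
Your proof is correct and follows essentially the same route as the paper's: you exhibit the Cayley involution $\theta$ as the Cartan involution for $\tau_\calC$ and identify the compact part via the same chain $\tau_\calC|_{\frakc} = \rho|_{\frakc} = \tau|_{\frakc}$, using $\sigma|_{\frakc} = \id$. The additional verifications you supply --- that $\sigma$ and $\tau$ preserve the decomposition $\frakg_0 = \frakc \oplus \bigoplus_j Z_{2m_j}$ so that $[\theta,\rho]=0$, and that $\frakg(e)\cap\frakg_0$ lies in the $-1$-eigenspace of $\theta$ so that $\frakc^\R$ sits inside the simple part --- are details the paper treats as holding ``by construction,'' and they check out.
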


\begin{proof}
  By construction the Cayley involution $\theta$ is the Cartan involution associated to the Cayley real form $\tau_\calC$, and $\theta$ has $\frakc$ as fixed point set. Moreover, the Cayley real form is equivalent to the canonical real form on $\frakc$ given by
  \begin{align*}
    \tau_\calC|_{\frakc} = (\theta \circ \rho)|_{\frakc} = \rho|_{\frakc} = (\tau \circ \sigma)|_{\frakc} = \tau|_{\frakc}. 
  \end{align*}
  Hence the fixed point set of $\theta|_{\frakg_{\calC^\R}}$ in the Cayley real form is $\frakc^\R$. 
\end{proof}

The next proposition describes the weight space decomposition of $\mathfrak{g}$ with respect to a magical triple.

\begin{proposition}\cite[Lemma 5.7]{Cayley}
  Let $\set{f,h,e}$ be a magical triple in $\frakg$. Then $\frakg$ admits a decomposition with respect to the magical triple of the form
  \begin{align*}
    \frakg = W(0) \oplus W(2m_{\calC})\setminus \frakg(e) \oplus \bigoplus_{i=1}^{r(e)} W(2l_i) \cap \frakg(e), 
  \end{align*}
  where $W_{2m_{\calC}}$ is the unique non-trivial weight module which intersects $\frakg_\calC^{\R,s}$ non-trivially, and $l_i$ are the exponents of $\frakg(e)$. 
\end{proposition}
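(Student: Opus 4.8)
The plan is to assemble the decomposition from three blocks---the $\fraks$-invariants $W(0)=\frakc$, the contribution of the double centralizer $\frakg(e)$, and a single leftover isotypic block---and to identify the leftover block through the isotropy representation of the Cayley symmetric pair.

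The first two blocks are immediate. By definition the trivial isotypic component is $W(0)=Z_\frakg(\fraks)=\frakc$. For the second, I would use that $\fraks$ is principal in the reductive algebra $\frakg(e)$, as noted above, together with Kostant's description of the adjoint action of a principal $\fraksl_2$-triple: as an $\fraks$-module, $\frakg(e)=\bigoplus_{i=1}^{r(e)} R_i$, where $R_i$ is the irreducible summand of highest weight $2l_i$, the $l_i$ being the exponents of $\frakg(e)$ and $r(e)=\rk(\frakg(e))$. Each $R_i$ is an irreducible $\fraks$-submodule of $\frakg$ of highest weight $2l_i$, hence $R_i=W(2l_i)\cap\frakg(e)$, which produces the term $\bigoplus_{i=1}^{r(e)} W(2l_i)\cap\frakg(e)$.

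It remains to analyze the complement. Let $\frakp$ be the Killing-orthogonal complement of the reductive subalgebra $\frakc\oplus\frakg(e)$; here $\frakg(e)\subseteq\frakc^\perp$ by its very definition as the double centralizer, so the sum is orthogonal and $\frakg=\frakc\oplus\frakg(e)\oplus\frakp$ with $\frakp$ stable under both $\fraks$ and $\frakc$. Passing to zero weight spaces and using $\frakg_0\cap\frakg(e)=\C^{r(e)}$, $\frakg_0=\frakg_0^s\oplus\C^{r(e)}$, and $\frakc\subseteq\frakg_0^s$ (the last inclusion from the Cartan decomposition $\frakg_\calC^{\R,s}=\frakc^\R\oplus\frakm_\calC^\R$), I find $\frakp\cap\frakg_0=\frakg_0^s\ominus\frakc=\frakm_\calC^\R\otimes_\R\C$, the complexified isotropy space of the symmetric pair $(\frakg_\calC^{\R,s},\frakc^\R)$.

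The crux is to show that this zero weight space lies in a single weight module $Z_{2m_\calC}=W(2m_\calC)_0$. Since $\frakg_\calC^{\R,s}$ is simple, the pair $(\frakg_\calC^{\R,s},\frakc^\R)$ is an irreducible symmetric pair, so its isotropy representation $\frakm_\calC^\R$ is irreducible \emph{as a real $\frakc^\R$-module}. Now the decomposition $\frakg_0=\frakc\oplus\bigoplus_j Z_{2m_j}$ is preserved by $\frakc$ (which commutes with $\fraks$) and by the real structure $\tau_\calC=\theta\circ\rho|_{\frakg_0}$, because both $\theta$ and $\rho|_{\frakg_0}$ preserve each $Z_{2m_j}$---the former acts there by a scalar, while $\sigma$ and $\tau$ each send the $\fraks$-isotypic component $W(2m_j)$ to itself. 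Hence it restricts to a decomposition $\frakm_\calC^\R=\bigoplus_j(\frakm_\calC^\R\cap Z_{2m_j})$ into real $\frakc^\R$-submodules, and real irreducibility forces a single nonzero term $\frakm_\calC^\R\subseteq Z_{2m_\calC}$; this is exactly the uniqueness assertion that $W(2m_\calC)$ is the only nontrivial weight module meeting $\frakg_\calC^{\R,s}$. Every irreducible $\fraks$-summand of $\frakp$ then has even highest weight (all eigenvalues of $\ad_h$ being even), hence a nonzero zero weight line lying in $\frakp\cap\frakg_0\subseteq Z_{2m_\calC}$, which forces that highest weight to equal $2m_\calC$; thus $\frakp$ is $\fraks$-isotypic of type $2m_\calC$. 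Intersecting $W(2m_\calC)$ with $\frakg=\frakc\oplus\frakg(e)\oplus\frakp$ gives $\frakp=W(2m_\calC)\setminus\frakg(e)$, while for $j\ne\calC$ the module $W(2m_j)$ meets neither $\frakp$ nor $\frakc$ and so is contained in $\frakg(e)$; together these give the stated decomposition. I expect the single-weight-module step to be the main obstacle: after complexification the isotropy module can split (when the Cayley pair is of Hermitian type), and the point of working with the real form is precisely that $\tau_\calC$ swaps the two conjugate complex pieces and so keeps them in the same $Z_{2m_\calC}$. The degenerate split case $\frakc=0$, in which $\frakg_0^s$ and the block $W(2m_\calC)$ are absent and the decomposition collapses to the principal one $\frakg=\bigoplus_i W(2l_i)$, should be recorded separately.
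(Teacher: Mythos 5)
The paper itself gives no proof of this proposition---it is quoted directly from \cite[Lemma 5.7]{Cayley}---so your argument can only be measured on its own terms. Its skeleton is sound and matches the structure theory the paper imports: $W(0)=\frakc$ by definition; Kostant's decomposition of $\frakg(e)$ under the principal $\fraks$ yields the summands $W(2l_i)\cap\frakg(e)$; the Killing complement $\frakp$ of $\frakc\oplus\frakg(e)$ meets $\frakg_0$ in the complexified isotropy module $\frakm_\calC$; and the even-weight argument at the end correctly converts ``$\frakm_\calC$ lies in a single $Z_{2m_\calC}$'' into the stated decomposition. The genuine gap is the pivotal single-module step. From the facts that $\frakc$ and $\tau_\calC$ preserve the decomposition $\frakg_0=\frakc\oplus\bigoplus_j Z_{2m_j}$ you infer that $\frakm_\calC^\R=\bigoplus_j(\frakm_\calC^\R\cap Z_{2m_j})$. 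That inference is invalid: an invariant subspace of a direct sum of invariant summands need not split along them (the diagonal in $V\oplus V$ is invariant yet meets each factor trivially). Irreducibility of $\frakm_\calC^\R$ only gives that each projection $\pi_j|_{\frakm_\calC^\R}$ is zero or injective; it does not exclude $\frakm_\calC^\R$ sitting diagonally across two summands $Z_{2m_j}$ carrying isomorphic copies of the isotropy module, in which case \emph{no} nontrivial weight module would intersect $\frakg_\calC^{\R,s}$ at all. Since this splitting is essentially the uniqueness assertion being proved, the step as written is a non sequitur.

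The gap is repairable with material you already have. Let $\mathfrak{z}=\frakg_0\cap\frakg(e)=\C^{r(e)}$ be the abelian complement of $\frakg_0^s$. It is spanned by the zero-weight vectors of the Kostant summands $R_i$, and each such vector lies in a single $Z_{2l_i}$, so $\mathfrak{z}$ genuinely does split: $\mathfrak{z}=\bigoplus_j(\mathfrak{z}\cap Z_{2m_j})$. Quotients, unlike subspaces, always split along a direct sum, so $\frakg_0/(\frakc\oplus\mathfrak{z})\cong\bigoplus_j Z_{2m_j}/(\mathfrak{z}\cap Z_{2m_j})$ as $\frakc$-modules with real structure, and the composite $\frakm_\calC\hookrightarrow\frakg_0\twoheadrightarrow\frakg_0/(\frakc\oplus\mathfrak{z})$ is an isomorphism commuting with $\tau_\calC$ (injective because $\frakm_\calC\subseteq\frakg_0^s$ meets $\frakc\oplus\mathfrak{z}$ trivially, surjective by dimension count). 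Real irreducibility of $\frakm_\calC^\R$ now legitimately kills all but one summand: $Z_{2m_j}\subseteq\frakg(e)$ for all $j\neq\calC$, whence $W(2m_j)=U(\fraks)\cdot Z_{2m_j}\subseteq\frakg(e)$, while $\frakm_\calC\subseteq Z_{2m_\calC}$, recovering Equation (\ref{eq:ZmC-frakmC}); your concluding paragraph then goes through verbatim. Two smaller caveats: the identification $R_i=W(2l_i)\cap\frakg(e)$ must be read isotypically when exponents of $\frakg(e)$ repeat, and the irreducibility of the isotropy representation requires $\frakg_\calC^{\R,s}$ to be noncompact, i.e.\ $\frakm_\calC\neq 0$, which holds away from the degenerate case you already set aside.
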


\begin{remark}
  The weight module $W(2m_{\calC})$ need not be fully contained in $\frakg_\calC^{\R,s}$. This occurs precisely when $W(2m_{\calC})$ intersects $\frakg(e)$ non-trivially, or in other words $m_{\calC} = l_i$ for some exponent $l_i$ of $\frakg(e)$. For example, in Case (2) of the classification of magical triples \cite[Theorem 3.1]{Cayley}, where the canonical real form $G^\R$ is a Hermitian Lie group of tube type, there is only one non-zero weight $2m_{\calC}=2l_1=2$, where $\frakg(e) = \fraks$ is the magical $\fraksl_2$ subalgebra. In this case, we write the decomposition as
  \begin{align*}
    \frakg = W(0) \oplus W(2) \setminus \fraks \oplus \fraks. 
  \end{align*}
\end{remark}

Note that since $W(2m_{\calC})$ is the only nontrivial weight module which intersects $\frakg_\calC^{\R,s}$, we have
\begin{equation}
  \label{eq:ZmC-frakmC}
  Z_{2m_{\calC}} = \frakm_\calC \oplus Z_{2m_{\calC}} \cap \frakg(e). 
\end{equation}

\subsection{Cayley components}
In \cite{Cayley}, the Cayley real form was used in order to provide a concrete description of certain connected components in the moduli space $\mathcal{M}(G^{\mathbb{R}})$ associated to these real forms. The connected components are parameterized by a map which can be explicitly described as follows. 
\begin{theorem}\cite[Theorem B]{Cayley}
  Let $G_\calC^{\R,s}$ and $G^\R$ be subgroups of $G$ with Lie algebras $\frakg_\calC^{\R,s}$ and $\frakg^\R$ respectively. Let $C$ and $H$ denote the complexification of the maximal compact subgroups of $G_\calC^{\R,s}$ and $G^\R$ respectively. 
  The Cayley map defined by
  \begin{align}
    \label{eq:cayley-map}
    &\Psi: \calM_{K^{m_{\calC}+1}}(G_\calC^{\R,s}) \times \bigoplus_{i=1}^{r(e)} H^0(K^{l_i+1}) \to \calM(G^\R), \quad \text{with} \\
    &\Psi((\calP_C, \psi), q_1,\ldots,q_{r(e)}) := (\calP_H, f + \tilde{\psi} + \sum_{i=1}^{r(e)} \tilde{q}_i), 
  \end{align}
  where $\tilde{\psi} = \ad_f^{-m_{\calC}}(\psi), \tilde{q}_i = \ad_f^{-m_i}(q_i)$, is an open and closed injection on moduli spaces.
\end{theorem}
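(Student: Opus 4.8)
The plan is to show that $\Psi$ is well-defined with stable image, injective, an immersion whose domain and target component have equal dimension (hence open), and proper (hence closed); these four properties force the image to be a union of connected components of $\calM(G^\R)$, which is precisely the assertion that $\Psi$ is an open and closed injection.

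First I would check well-definedness and stability. Given $(\calP_C,\psi)$ together with differentials $q_i \in H^0(K^{l_i+1})$, the bundle $\calP_H$ is built from $\calP_C$ by extension of structure group through the principal $\SL_2$ attached to $\fraks$, and the characteristic $\ad_h$-grading on $\frakm$ together with the weight-module decomposition $\frakg = W(0) \oplus (W(2m_\calC)\setminus\frakg(e)) \oplus \bigoplus_{i=1}^{r(e)}(W(2l_i)\cap\frakg(e))$ guarantees that $\tilde\psi = \ad_f^{-m_\calC}(\psi)$ and $\tilde q_i = \ad_f^{-m_i}(q_i)$ twist by exactly the powers of $K$ needed for $\Phi = f + \tilde\psi + \sum_i \tilde q_i$ to define a holomorphic section of $\calP_H(\frakm)\otimes K$. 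Stability of $(\calP_H,\Phi)$ as a $G^\R$-Higgs bundle, and simultaneously as a $G$-Higgs bundle, should follow from the presence of the lowest-weight term $f$: any reduction to a parabolic violating the stability inequality would have to be compatible with the $\ad_h$-grading, and the component $f$ obstructs the existence of such a reduction, generalizing the classical argument for the Hitchin section. Stability as a $G$-Higgs bundle is what, via Proposition~\ref{prop:hypercohom-of-G-Higgs-bundles}, will make each image point a smooth point.

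Next I would establish injectivity using the Slodowy slice. Since $f+V$ is a slice for the adjoint action of $G$ and the centralizer $\frakc$ preserves it, an isomorphism between two image Higgs bundles is realized by a gauge transformation preserving the slice structure; such a transformation must reduce to the subgroup $C$ and hence recover both the $G_\calC^{\R,s}$-data $(\calP_C,\psi)$ and the tuple $(q_1,\ldots,q_{r(e)})$. For openness I would compute $d\Psi$ through the hypercohomology description of tangent spaces: by Proposition~\ref{prop:hypercohom-of-G-Higgs-bundles} the tangent space to $\calM(G^\R)$ at an image point $(\calP_H,\Phi)$ is $\mathbb{H}^1(C^\bullet(\calP_H,\Phi))$, and one identifies the image of $d\Psi$ with the contributions of deformations of $(\calP_C,\psi)$ and of variations of the $q_i$. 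Verifying that $d\Psi$ is injective makes $\Psi$ an immersion, and a dimension count — matching the dimension of $\calM_{K^{m_\calC+1}}(G_\calC^{\R,s}) \times \bigoplus_i H^0(K^{l_i+1})$ against that of the ambient component of $\calM(G^\R)$, computed from the weight-module decomposition and Riemann--Roch with the exponents $l_i$ of $\frakg(e)$ — then upgrades the immersion to a local diffeomorphism, so the image is open.

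Finally, closedness would follow from properness of $\Psi$, which I would deduce from compatibility with the Hitchin fibrations on both sides: the invariant polynomials of $\Phi = f+\tilde\psi+\sum_i\tilde q_i$ are controlled by the Hitchin map for $G_\calC^{\R,s}$ evaluated on $\psi$ together with the differentials $q_i$, so $\Psi$ intertwines the two Hitchin maps, and properness of the target fibration forces $\Psi$ to be proper. The main obstacle I anticipate is precisely this last step. Unlike the original Hitchin section, whose parameter space is a vector space, here the base is itself a moduli space $\calM_{K^{m_\calC+1}}(G_\calC^{\R,s})$, so Hitchin's properness argument does not transfer directly, and one must prove a relative version over the $C$-Hitchin base, controlling limits that could a priori leave the slice $f+V$. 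The dimension count of the openness step is the other delicate point, as it hinges on the exact weight-module decomposition and on the identity \eqref{eq:ZmC-frakmC}.
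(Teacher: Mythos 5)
A preliminary point: the paper you were given does not prove this statement at all --- it is imported verbatim as \cite[Theorem B]{Cayley}, so your sketch has to be measured against the proof in \cite{Cayley}, of which the present paper uses two ingredients (the hypercohomology isomorphism \cite[Proposition~7.10]{Cayley} inside Lemma~\ref{lem:stability-lemma}, and the star-product harmonic reduction of Definition~\ref{def:harmonic_red}). Measured that way, your skeleton (well-defined, injective, open, closed) is right, but two of the four steps have genuine gaps. First, your stability claim is false in general: $\Psi$ is defined on the moduli space of \emph{polystable} $K^{m_\calC+1}$-twisted $G_\calC^{\R,s}$-Higgs bundles, and strictly polystable $(\calP_C,\psi)$ produce strictly polystable images; the presence of the summand $f$ does not obstruct destabilizing reductions the way it does for the Hitchin section, because here $\calP_C$ is an arbitrary polystable bundle rather than a fixed uniformizing one. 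Note that in the present paper, stability as a $G$-Higgs bundle is an explicit \emph{hypothesis} of Lemma~\ref{lem:stability-lemma}, not a consequence of lying in the image of $\Psi$. This breaks your route to openness: an immersion-plus-dimension-count argument upgrades to a local diffeomorphism only at smooth (stable and simple) points, and says nothing at the strictly polystable, singular points of $\calM(G^\R)$. In \cite{Cayley}, openness is instead extracted from an isomorphism of the full deformation complexes of $(\calP_C,\psi,q)$ and of the image $(\calP_H,\Phi)$ --- identifying local models on both sides rather than matching dimensions --- which is exactly the statement quoted in the proof of Lemma~\ref{lem:stability-lemma}.

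Second, well-definedness is the analytic heart of the theorem and cannot be dispatched by the parabolic-reduction remark you make: one must prove that polystability of $(\calP_C,\psi)$ implies (and is implied by) polystability of $(\calP_H, f+\tilde\psi+\sum_i \tilde q_i)$. In \cite{Cayley} this is done through the Hitchin--Kobayashi correspondence on both sides: a solution metric for the twisted $G_\calC^{\R,s}$-Higgs bundle is combined with the uniformizing metric on $\calP_{K^{1/2}}$ to manufacture a solution of the $G^\R$-Hitchin equations on $\calP_H = \calP_C \star \calP_{K^{1/2}}(H)$, precisely the mechanism echoed in Definition~\ref{def:harmonic_red} and the proposition following it; without this step the map is not even known to land in $\calM(G^\R)$. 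On the credit side, your injectivity argument via the Slodowy slice $f+V$ and the $\frakc$-action is in the right spirit, and so is your closedness strategy: properness in \cite{Cayley} does rest on compatibility of $\Psi$ with the proper Hitchin-type fibrations of domain and target, and the difficulty you flag --- that the domain is itself a moduli space rather than a vector space --- is resolved exactly by the relative (over-the-base) formulation you propose. So your plan is salvageable, but only after replacing the stability claim by the polystability correspondence via Hitchin's equations and the dimension count by the quasi-isomorphism of deformation complexes.
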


The image of $\Psi$ is a union of connected components in the moduli space $\calM(G^\R)$, which will be called the \emph{Cayley components}.

\subsection{Theta-positive structures}
\label{sec:theta_pos_str}
The notion of a $\Theta$-positive structure for a simple real Lie group is a generalization of Lusztig's total positivity condition in \cite{Lusztig} and was introduced by Guichard and Wienhard in \cite{GW} in order to distinguish connected components of $\Theta$-positive surface group representations. This notion is given in terms of properties of the Lie algebra of parabolic subgroups of the simple real Lie group defined by a subset of simple positive roots. We briefly review this structure next and refer to \cite{GW} for complete descriptions.  

Let $G^\R$ be a connected simple real Lie group with finite centre. Let $\Sigma^\R$ be the restricted root system associated to $G^\R$, and let $\Delta^\R \subset \Sigma^{\R,+}$ denote the set of positive simple roots. Fix a subset $\Theta \subset \Delta^\R$ of positive simple roots.
Let $\Sigma^{\R,+}_\Theta = \Sigma^{\R,+} \setminus \ang{\Delta^\R \setminus \Theta}$. There are two nilpotent subalgebras associated to $\Theta$: 
\begin{align*}
  \fraku_\Theta^{\R,\pm} = \sum_{\alpha \in \Sigma^{\R,+}_\Theta} \frakg^\R_{\pm \alpha}. 
\end{align*}
The normalizers of the nilpotent subalgebras $P_\Theta^{\R,\pm} := N_{G^\R}(\fraku_\Theta^{\R,\pm})$ are parabolic subgroups of $G^\R$. Let $L_\Theta^\R = P_\Theta^{\R,+} \cap P_\Theta^{\R,-}$ be the Levi subgroup of the parabolic subgroups, and write $L_\Theta^{\R,0}$ for the identity component in $L_\Theta^\R$.

For each $\beta \in \Sigma^\R$, let now
\begin{align*}
  \fraku_\beta^{\R,\pm} = \sum_{\alpha \in \Sigma^\R, \, \alpha - \beta \in \ang{\Delta^\R\setminus \Theta}} \frakg^\R_{\pm \alpha}. 
\end{align*}

A \emph{$\Theta$-positive structure} on $G^\R$ is a set of non-trivial convex cones $c_\alpha \subset \fraku_\alpha^\R$ labelled by $\alpha \in \Theta$, which are acute and $L_\Theta^{\R,0}$-invariant. 

\begin{theorem}\cite[Theorem 3.4]{GW}
  The simple real groups $G^\R$ that admit a $\Theta$-positive structure are classified by the following four families: 
  \begin{enumerate}
  \item Split real groups.
  \item Hermitian groups of tube type.
  \item Groups locally isomorphic to $\SO(p+1,p+k)$, $p, k > 1$. 
  \item The real forms of $E_6, E_7, E_8$ or $F_4$, whose reduced root system is of type $F_4$. 
  \end{enumerate}
  In the first case above, a split real group admits a $\Theta$-positive structure for $\Theta = \Delta^\R$ the entire restricted simple roots; in all other cases $\Theta$ consists of the set of long restricted simple roots. 
\end{theorem}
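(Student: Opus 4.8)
The plan is to reduce the statement to a representation-theoretic question about the Levi subgroup, and then to run the resulting criterion through the classification of simple real Lie algebras by their restricted root systems. By definition a $\Theta$-positive structure is the data of a nonzero acute $L_\Theta^{\R,0}$-invariant convex cone $c_\alpha \subset \fraku_\alpha^\R$ for every $\alpha \in \Theta$, so the existence question is equivalent to asking whether the representations $\pi_\alpha : L_\Theta^{\R,0} \to \GL(\fraku_\alpha^\R)$ simultaneously admit such cones. First I would record that, because every root space in $\fraku_\alpha^\R$ has the same $\Theta$-height as $\alpha$, the central split torus of $L_\Theta^{\R,0}$ acts on $\fraku_\alpha^\R$ by a single positive character (the $\Theta$-grading) and hence by positive scalars; this is compatible with any acute cone and imposes no constraint. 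The genuine obstruction is therefore the action of the semisimple part $L_\Theta^{\mathrm{ss}}$ and, in particular, of its maximal compact subgroup $K_\Theta$.

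The heart of the argument is the following criterion for a single $\alpha$: the representation $\pi_\alpha$ admits a nonzero acute $L_\Theta^{\R,0}$-invariant convex cone if and only if $\fraku_\alpha^\R$ is either one-dimensional (so $L_\Theta^{\mathrm{ss}}$ acts trivially) or carries the structure of a Euclidean Jordan algebra on which $L_\Theta^{\mathrm{ss}}$ acts through its structure group, the cone being the associated symmetric cone. The ``if'' direction is immediate, since the symmetric cone of a Euclidean Jordan algebra is acute and invariant under the identity component of the structure group. For the ``only if'' direction I would argue that, because $K_\Theta$ is compact, the closed convex hull of a generic $K_\Theta$-orbit is $K_\Theta$-invariant and, unless the orbit is constrained by an invariant form of the appropriate signature, contains $0$ in its interior, which destroys acuteness. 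Pinning down exactly which invariant bilinear (or higher) data survive this averaging is where the Euclidean Jordan algebra structure emerges, and the Jordan--von Neumann--Wigner classification then sorts the admissible cones into the symmetric, Hermitian, quaternionic-Hermitian, spin-factor (Lorentzian) and exceptional $27$-dimensional families.

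With the criterion in hand, the remaining work is a finite case-check organized by Satake diagrams. For each simple real $G^\R$ one takes $\Theta = \Delta^\R$ when $G^\R$ is split and $\Theta$ equal to the set of long restricted simple roots otherwise, computes $L_\Theta^{\R,0}$ together with its action on each $\fraku_\alpha^\R$ from the restricted root multiplicities, and tests the criterion. Split groups pass automatically, since all restricted root spaces are one-dimensional, giving family (1) with the rays $\R_{>0}$. The indefinite orthogonal groups of type $B$ produce the Lorentzian spin-factor cone and yield family (3); the tube-type Hermitian groups and the reduced type-$F_4$ real forms of $E_6, E_7, E_8$ or $F_4$ produce the remaining Hermitian and exceptional Jordan cones, giving families (2) and (4). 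Every other real form is excluded because at least one $\fraku_\alpha^\R$ supports a Levi representation whose compact part forces $0$ into the interior of generic convex hulls.

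The main obstacle is the ``only if'' half of the criterion: showing that outside the Jordan-algebra list no nonzero acute invariant cone can exist. The difficulty is genuinely the interaction between the non-compact semisimple Levi action and its maximal compact subgroup $K_\Theta$; one must prove that the only mechanisms producing an acute invariant cone are the $\Theta$-grading, already handled by the central torus, and an invariant symmetric-cone structure, and must rule out all other possibilities. Once this structural result is established, the case-by-case verification, though lengthy, is routine bookkeeping with the restricted root data.
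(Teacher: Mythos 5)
First, a point of order: the paper does not prove this statement at all --- it is imported verbatim with the citation \cite[Theorem 3.4]{GW}, so your attempt can only be compared with the proof of Guichard--Wienhard. That said, your overall strategy does track the actual argument there: by the definition recalled in Section \ref{sec:theta_pos_str}, existence of a $\Theta$-positive structure is exactly the existence of nontrivial acute $L_\Theta^{\R,0}$-invariant convex cones $c_\alpha \subset \fraku_\alpha^\R$ for all $\alpha \in \Theta$, and Guichard--Wienhard likewise reduce the question to a representation-theoretic condition on the Levi action on $\fraku_\alpha^\R$, identify the higher-dimensional invariant cones with symmetric cones of Euclidean Jordan algebras, and conclude by inspecting restricted root data. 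Your observation that the central split torus acts on $\fraku_\alpha^\R$ by a single character and is therefore harmless is also correct.

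However, as a proof your proposal has a genuine gap, and you have located it yourself: the ``only if'' half of your key criterion is asserted, not proved. Your compact-averaging argument yields exactly one necessary condition --- averaging a nonzero $v \in c_\alpha$ over the maximal compact subgroup $K_\Theta$ of $L_\Theta^{\R,0}$, and pairing against a linear functional strictly positive on $\bar{c}_\alpha \setminus \set{0}$, shows that $\fraku_\alpha^\R$ has a nonzero $K_\Theta$-fixed vector --- but this is far from forcing a Euclidean Jordan algebra structure with $L_\Theta^{\mathrm{ss}}$ acting through its structure group. In particular, an invariant acute cone is a priori neither self-dual nor homogeneous, so the Koecher--Vinberg correspondence, and hence the Jordan--von Neumann--Wigner classification you invoke, does not apply directly: the substantive content of the theorem is precisely to show that the existence of \emph{some} invariant acute cone already forces this very rigid structure, and your text defers exactly this step (``pinning down \dots is where the Euclidean Jordan algebra structure emerges''). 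Two further quantifier-level gaps: the theorem classifies groups admitting a $\Theta$-positive structure for \emph{some} subset $\Theta \subset \Delta^\R$, so the case-check must range over all pairs $(G^\R, \Theta)$ and rule out every non-listed choice of $\Theta$ even for the listed groups, whereas you only test the preferred $\Theta$ for each group; and the concluding Satake-diagram verification, which carries the actual classification, is declared ``routine bookkeeping'' but never performed. As written, the proposal establishes the easy ``if'' direction and the fixed-vector necessary condition, but not the classification itself.
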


Let $W$ be the Weyl group associated to the root system $\Delta^\R$ of $\frakg^\R$. For a subset $F \subset \Delta^\R$ of simple roots, we write $W_F = \ang{s_\alpha \in W \mid \alpha \in F}$ to mean the subgroup of the Weyl group generated by reflections associated to the roots in $F$.

For each root $\alpha \in \Theta$, there exists a $W_{\Delta^\R \setminus \Theta}$-invariant choice of $\fraksl_2\R$-triple $\set{f_\alpha, h_\alpha, e_\alpha}$ satisfying (see \cite[Section 3.3]{GW}):
\begin{itemize}
\item $e_\alpha \in c_\alpha^\circ$, where $c_\alpha^\circ$ is the interior of the convex cone from the $\Theta$-positive structure. 
\item $f_\alpha = -\rho(e_\alpha)$, for $\rho$ the Cartan involution on $\frakg^\R$. 
\item $h_\alpha = [e_\alpha, -\rho(f_\alpha)]$. 
\end{itemize}
Note that $f_\alpha \in c_\alpha^{\circ,-}$, the opposite cone to $c_\alpha^{\circ}$. The set $\set{f_\alpha, h_\alpha, e_\alpha}_{\alpha \in \Theta}$ is called a \emph{$\Theta$-base} of $\frakg^\R$.

Let $w_F$ denote the longest word in $W_F$. For $\alpha_\Theta$, the unique root in $\Theta$ which is connected to $\Delta^\R \setminus \Theta$ in the Dynkin diagram, let $\sigma_{\alpha_\Theta} \in W$ be defined by the equation 
\begin{align*}
  w_{\set{\alpha_\Theta} \cup \Delta^\R \setminus \Theta} = \sigma_{\alpha_\Theta} w_{\Delta^\R \setminus \Theta}. 
\end{align*}
Then, the \emph{$\Theta$-Weyl group} is the subgroup $W(\Theta) < W$ generated by the reflections
\begin{align*}
  R(\Theta) = \set{s_\alpha, \sigma_{\alpha_\Theta} \in W \mid \alpha \in \Theta \setminus \set{\alpha_\Theta}}. 
\end{align*}

\begin{proposition}\cite[Proposition 4.7]{GW}
  Let $G^\R$ be a simple real Lie group with a $\Theta$-positive structure. Then, the Coxeter system $(W(\Theta), R(\Theta))$ can be of one of the following Lie types: 
  \begin{enumerate}
  \item For a split real group, the same type as the Coxeter system associated to ($\frakg^\R,\Delta^\R$). 
  \item For a Hermitian group of tube type, the type $A_1$.
  \item For a group locally isomorphic to $\SO(p+1,p+k)$, $p, k > 1$, the type $B_p$.
  \item For a real form of $E_6, E_7, E_8$ or $F_4$ with reduced root system of type $F_4$, the type $G_2$. 
  \end{enumerate}
\end{proposition}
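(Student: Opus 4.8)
The plan is to invoke the classification theorem above to reduce to the four families, and in each to read off the restricted root system $\Sigma^\R$, the subset $\Theta \subset \Delta^\R$, its complement $\Delta^\R\setminus\Theta$, and the distinguished root $\alpha_\Theta$. Since $W(\Theta)$ is generated by the elements of $R(\Theta)$, identifying its Coxeter type reduces to determining the braid orders $m(r,r')$ for $r,r'\in R(\Theta)$, and the one nontrivial geometric input is the explicit form of $\sigma_{\alpha_\Theta} = w_{F'}\,w_{F}$, where I write $F = \Delta^\R\setminus\Theta$ and $F' = \set{\alpha_\Theta}\cup F$. A preliminary step is to record that $\sigma_{\alpha_\Theta}$ is an involution: in each non-split family $F'$ spans a subsystem whose longest element is $-\id$, so that $w_{F'}$ normalizes $W_{F}$ and commutes with $w_{F}$, giving $\sigma_{\alpha_\Theta}^2 = \id$ and making $(W(\Theta), R(\Theta))$ a genuine Coxeter system.

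Two families are degenerate. In the split case $\Theta = \Delta^\R$, so $F = \emptyset$, $w_F = \id$ and $\sigma_{\alpha_\Theta} = s_{\alpha_\Theta}$; thus $R(\Theta)$ is the full set of simple reflections and $W(\Theta) = W$, of the same type as $(\frakg^\R, \Delta^\R)$. For a Hermitian group of tube type the restricted system is of type $C_r$, whose unique long simple root is $2e_r$; as $\Theta$ consists of the long simple roots, $\Theta = \set{\alpha_\Theta}$ is a single root, $\Theta\setminus\set{\alpha_\Theta} = \emptyset$, and $W(\Theta) = \ang{\sigma_{\alpha_\Theta}}\cong\Z/2\Z$ is of type $A_1$.

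For $G^\R$ locally isomorphic to $\SO(p+1,p+k)$ with $k>1$ the restricted system is $B_{p+1}$, with long simple roots $\alpha_i = e_i - e_{i+1}$ $(1\le i\le p)$ forming an $A_p$ chain and short simple root $\alpha_{p+1} = e_{p+1}$; hence $\Theta = \set{\alpha_1,\dots,\alpha_p}$ and $\alpha_\Theta = \alpha_p$. Here $F' = \set{\alpha_p,\alpha_{p+1}}$ spans a rank-two $B_2$ subsystem, and I would compute $\sigma_{\alpha_p} = w_{F'}\,s_{\alpha_{p+1}} = s_{\alpha_p}s_{\alpha_{p+1}}s_{\alpha_p} = s_{e_p}$, the reflection in the short root $e_p$. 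Since $e_i - e_{i+1}\perp e_p$ for $i\le p-2$, each such $s_{\alpha_i}$ commutes with $\sigma_{\alpha_p}$, while $s_{\alpha_{p-1}} = s_{e_{p-1}-e_p}$ together with $\sigma_{\alpha_p} = s_{e_p}$ spans a $B_2$, so $m(s_{\alpha_{p-1}},\sigma_{\alpha_p}) = 4$. The Coxeter diagram is therefore the $A_{p-1}$ chain $s_{\alpha_1},\dots,s_{\alpha_{p-1}}$ with a double bond to $\sigma_{\alpha_p}$, i.e.\ type $B_p$.

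The remaining family, and the main obstacle, is the $F_4$ case. Here $\Theta = \set{\alpha_1,\alpha_2}$ (the long simple roots), $F = \set{\alpha_3,\alpha_4}$, $\alpha_\Theta = \alpha_2$, so $R(\Theta) = \set{s_{\alpha_1},\sigma_{\alpha_2}}$ has two generators and it remains to show $m(s_{\alpha_1},\sigma_{\alpha_2}) = 6$. I would compute $\sigma_{\alpha_2} = w_{\set{\alpha_2,\alpha_3,\alpha_4}}\,w_{\set{\alpha_3,\alpha_4}}$ in Bourbaki coordinates, using that $\set{\alpha_2,\alpha_3,\alpha_4}$ is a rank-three subsystem of type $C_3$ (so $w_{\set{\alpha_2,\alpha_3,\alpha_4}} = -\id$ on its span) and $\set{\alpha_3,\alpha_4}$ is of type $A_2$. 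The delicate point is that, in contrast to the previous case, $\sigma_{\alpha_2}$ is \emph{not} a single reflection: an eigenvalue computation shows its $(-1)$-eigenspace is two-dimensional, so $\sigma_{\alpha_2}$ is a product of two commuting reflections with $\det\sigma_{\alpha_2} = +1$. Consequently $P := s_{\alpha_1}\sigma_{\alpha_2}$ has $\det P = -1$, and since one computes $\tr P = 1$, the eigenvalues of $P$ must be $\set{1,-1,e^{\pm i\pi/3}}$, so $P$ has order $6$. This exhibits $W(\Theta)$ as the dihedral group of order $12$, i.e.\ type $G_2$, completing the classification. A more uniform alternative would be to identify $W(\Theta)$ with the normalizer quotient $N_W(W_{F})/W_{F}$ and apply the general description of such quotients as Coxeter groups, but the explicit root computations above are the most direct.
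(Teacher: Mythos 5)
The paper does not actually prove this statement: it is imported verbatim as \cite[Proposition 4.7]{GW}, so there is no internal argument to compare against, and your proposal should be judged as a self-contained verification. Judged on those terms, it is correct, and it follows essentially the same case-by-case route through the classification that Guichard--Wienhard themselves take. I checked your two nontrivial computations and both are right. In case (3), with $\Sigma^\R$ of type $B_{p+1}$ and $F'=\set{\alpha_p,\alpha_{p+1}}$ of type $B_2$, one indeed has $w_{F'}w_F = s_{\alpha_p}s_{\alpha_{p+1}}s_{\alpha_p} = s_{e_p}$, and the resulting reflections $s_{e_1-e_2},\dots,s_{e_{p-1}-e_p}, s_{e_p}$ are a simple system for a $B_p$ subsystem, so the braid orders you list ($3$ along the chain, $4$ at the end, $2$ otherwise) do determine Coxeter type $B_p$ with no hidden relations. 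In case (4), working in Bourbaki coordinates with $\alpha_1=e_2-e_3$, $\alpha_2=e_3-e_4$, $\alpha_3=e_4$, $\alpha_4=\tfrac12(e_1-e_2-e_3-e_4)$, the subsystem $\set{\alpha_2,\alpha_3,\alpha_4}$ is of type $C_3$ (long root at the double bond), $w_{F'}$ is $-\id$ on $\ang{e_3,e_4,e_1-e_2}$ and fixes $e_1+e_2$, and a direct matrix computation confirms your claims: $\sigma_{\alpha_2}$ has trace $0$, so it is an involution with two-dimensional $(-1)$-eigenspace and $\det = +1$ (a product of two commuting reflections, not a reflection), while $P=s_{\alpha_1}\sigma_{\alpha_2}$ has $\tr P = 1$ and $\det P = -1$, forcing eigenvalues $\set{1,-1,e^{\pm i\pi/3}}$ and order $6$; since two involutions whose product has order $6$ generate a dihedral group of order $12$ with the standard Coxeter presentation, the type is $G_2$. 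Your preliminary observation that $w_0(F')=-\id$ on the span of $F'$ in every non-split case (types $C_r$, $B_2$, $C_3$) is also correct and does yield $\sigma_{\alpha_\Theta}^2=\id$, since $w_{F'}$ then centralizes $W_F$.

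Two small loose ends worth tightening if you write this up in full. First, in the Hermitian case you should note explicitly that $\sigma_{\alpha_\Theta}\neq \id$, i.e.\ $w_{C_r} = -\id \neq w_{A_{r-1}}$; this holds for $r\ge 2$ (e.g.\ $w_{A_{r-1}}$ fixes $e_1+\cdots+e_r$ while $-\id$ negates it), and $r=1$ falls into the split case. Second, your passage from the eigenvalues of $P$ to its order uses that $P$ is semisimple; this is automatic because Weyl group elements are orthogonal transformations, but it should be said. Neither point affects the validity of the argument.
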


\begin{theorem}\cite[Theorem 5.1]{GW}
  Let $\frakg_\Theta^\R = \ang{f_\alpha, h_\alpha, e_\alpha}_{\alpha \in \Theta}$ be the subalgebra generated by the $\Theta$-base of $\frakg^\R$. Then, $\frakg_\Theta^\R$ is a split real Lie algebra of Lie type $(W(\Theta),R(\Theta))$. 
\end{theorem}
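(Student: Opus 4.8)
The plan is to produce the Chevalley--Serre presentation of $\frakg_\Theta^\R$ directly from the $\Theta$-base, with $\frakt_\Theta := \mathrm{span}_\R\set{h_\alpha : \alpha \in \Theta}$ as the candidate maximal split Cartan subalgebra. First I would extract the two structural consequences of the $W_{\Delta^\R \setminus \Theta}$-invariance of the $\Theta$-base: the $h_\alpha$ are linearly independent and mutually commute, so $\frakt_\Theta$ is abelian of dimension $|\Theta|$; and every restricted root in $\ang{\Delta^\R \setminus \Theta}$ vanishes on $\frakt_\Theta$, so that the centralizer $Z_{\frakg^\R}(\frakt_\Theta)$ is exactly the Levi $\frakl_\Theta^\R$. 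The latter is the decisive point: since $\fraku_\alpha^\R = \sum_{\beta : \beta - \alpha \in \ang{\Delta^\R \setminus \Theta}} \frakg^\R_\beta$, all the $\frakt_\Theta$-weights appearing in $\fraku_\alpha^\R$ collapse onto the single functional $\bar\alpha := \alpha|_{\frakt_\Theta}$. Hence $e_\alpha \in c_\alpha \subset \fraku_\alpha^\R$ and $f_\alpha = -\rho(e_\alpha) \in \fraku_\alpha^{\R,-}$ are genuine $\frakt_\Theta$-weight vectors of weights $\pm\bar\alpha$, even though they are not root vectors for $\Sigma^\R$.

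With this in place I would set $a_{\alpha\beta} := \bar\alpha(h_\beta) = \alpha(h_\beta)$, so that $[h_\beta, e_\alpha] = a_{\alpha\beta} e_\alpha$ and $[h_\beta, f_\alpha] = -a_{\alpha\beta} f_\alpha$, with $a_{\alpha\alpha} = 2$ from the $\fraksl_2\R$-relations. The relation $[e_\alpha, f_\beta] = \delta_{\alpha\beta} h_\alpha$ is then forced by a double grading argument: $e_\alpha$ and $f_\beta$ lie in the $\Theta$-height $\pm 1$ pieces, so $[e_\alpha, f_\beta]$ lies in the $\Theta$-height-zero piece $\frakl_\Theta^\R$, which is the $\frakt_\Theta$-weight-zero subspace; since $[e_\alpha, f_\beta]$ also carries weight $\bar\alpha - \bar\beta$, it must vanish unless $\bar\alpha = \bar\beta$, while for $\alpha = \beta$ the triple relation returns $h_\alpha$. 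The same weight argument gives $[f_\alpha, e_\beta] = 0$ for $\alpha \neq \beta$, so $e_\beta$ is a lowest weight vector for the $\fraksl_2\R$-triple $\set{f_\alpha, h_\alpha, e_\alpha}$ with lowest weight $a_{\beta\alpha}$; finite-dimensionality of $\frakg^\R$ then forces $a_{\beta\alpha} \in \Z_{\le 0}$ and yields the Serre relations $(\ad_{e_\alpha})^{1 - a_{\beta\alpha}} e_\beta = 0$ together with their $f$-analogues. This shows $A = (a_{\alpha\beta})$ is a genuine Cartan matrix and that the generators satisfy the Serre relations attached to it.

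It follows that $\frakg_\Theta^\R$ is a homomorphic image of the split real Lie algebra defined by $A$. If $A$ is of finite type this algebra is semisimple, and since the $h_\alpha$ remain linearly independent and each $e_\alpha$ is nonzero, no simple factor is killed, so the map is an isomorphism; because $\frakt_\Theta$ is real and acts semisimply with real eigenvalues, the resulting real form is split. To finish I would identify $A$ with the Cartan matrix of the Coxeter system $(W(\Theta), R(\Theta))$: the reflection determined on $\frakt_\Theta^*$ by $\bar\alpha$ for $\alpha \in \Theta \setminus \set{\alpha_\Theta}$ is $s_\alpha$, while the reflection determined by $\bar\alpha_\Theta$ is the composite $\sigma_{\alpha_\Theta}$ defined above, so these reflections generate precisely $W(\Theta)$ and the type of $A$ is the one listed in the preceding proposition.

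The main obstacle I anticipate is this final identification, and specifically the role of the distinguished root $\alpha_\Theta$. Because $\sigma_{\alpha_\Theta}$ is not the simple reflection $s_{\alpha_\Theta}$ but the composite $w_{\set{\alpha_\Theta} \cup \Delta^\R \setminus \Theta}\, w_{\Delta^\R \setminus \Theta}$, the off-diagonal Cartan integer $a_{\alpha_\Theta \beta}$, with $\beta$ the neighbour of $\alpha_\Theta$ inside $\Theta$, is what produces the non-simply-laced multiplicities giving types $B_p$ and $G_2$. Verifying its precise value amounts to controlling how the longest element of the Levi Weyl group twists $\alpha_\Theta$ upon restriction to $\frakt_\Theta$, and this is exactly where the four-case classification of $\Theta$-positive groups and the explicit combinatorics of their restricted root systems have to be invoked.
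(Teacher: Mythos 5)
A preliminary remark: the paper does not prove this statement at all --- it is imported verbatim as background from \cite[Theorem 5.1]{GW} --- so there is no internal proof to compare against; the benchmark is Guichard--Wienhard's own argument, which ultimately rests on the classification of $\Theta$-positive structures and explicit computations in the four families of restricted root systems. Your Chevalley--Serre skeleton is the natural strategy for such a statement, and several of your steps are correct and well isolated: $\delta(h_\alpha)=0$ for $\delta \in \Delta^\R\setminus\Theta$ from the $W_{\Delta^\R\setminus\Theta}$-invariance, hence the collapse of all $\frakt_\Theta$-weights of $\fraku_\alpha^\R$ onto the single functional $\bar{\alpha}$; the height-versus-weight argument for $[e_\alpha,f_\beta]=0$; and the $\fraksl_2$ lowest-weight argument yielding integrality of the off-diagonal entries and the Serre relations.

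Nevertheless there are genuine gaps, one of them fatal to the proposal as written. (i) Commutativity of the $h_\alpha$ does not follow from $W_{\Delta^\R\setminus\Theta}$-invariance as you assert: you need the fact, which is part of the construction of the $\Theta$-base in \cite{GW}, that each $h_\alpha$ lies in the maximal split Cartan subalgebra $\mathfrak{a}$; without it $\frakt_\Theta$ need not be abelian and your entire weight formalism collapses (indeed your appeal to $s_\delta(h_\alpha)=h_\alpha$ already presupposes $h_\alpha \in \mathfrak{a}$). (ii) Your vanishing argument for $[e_\alpha,f_\beta]$ with $\alpha\neq\beta$ requires $\bar{\alpha}\neq\bar{\beta}$, which is not automatic: the kernel of the restriction map to $\frakt_\Theta^*$ could a priori be strictly larger than $\mathrm{span}(\Delta^\R\setminus\Theta)$, and ruling this out amounts to nonsingularity information about the very matrix $A$ you are constructing; note you cannot invoke the inequality $a_{\beta\alpha}\le 0$ here, since the lowest-weight argument that produces it presupposes $[f_\alpha,e_\beta]=0$ and is therefore circular. (iii) Most importantly, the computation of the off-diagonal Cartan integers, above all those involving $\alpha_\Theta$, is exactly where the content of the theorem sits --- it is what produces the non-simply-laced types $B_p$ and $G_2$ --- and you explicitly defer it to the four-case classification. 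With that step deferred, what your argument establishes (after repairing (i) and (ii)) is only that $\frakg_\Theta^\R$ is a split semisimple Lie algebra attached to \emph{some} finite-type Cartan matrix, not that its type is $(W(\Theta),R(\Theta))$; even the finite-type claim deserves its one-line justification, namely that $\frakg^\R$ is finite-dimensional while for indecomposable symmetrizable $A$ not of finite type the derived Kac--Moody algebra modulo its centre is simple and infinite-dimensional, so no quotient in which the $h_\alpha$ survive can exist. In short: a correct scaffold with the decisive identification missing, rather than a complete proof.
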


Let $\Theta^+$ denote the set of positive roots generated by $\Theta$. For each $\alpha \in \Theta^+ \setminus \Theta$, fix an $\fraksl_2\R$-triple $\set{f_\alpha, h_\alpha, e_\alpha}$ in $\frakg_\Theta^\R$. Set
\begin{align*}
  h_\Theta := \sum_{\alpha \in \Theta^+} h_\alpha = \sum_{\alpha \in \Theta} r_\alpha h_\alpha, r_\alpha \in \Z^+.
\end{align*}
The second equality comes from the fact that $\set{h_\alpha}_{\alpha \in \Theta}$ is a basis of the Cartan subalgebra generated by $\Theta$. Then set
\begin{align*}
  e_\Theta := \sum_{\alpha \in \Theta} \sqrt{r_\alpha} e_\alpha, \quad f_\Theta := \sum_{\alpha \in \Theta} \sqrt{r_\alpha} f_\alpha. 
\end{align*}

By construction, $\set{f_\Theta,h_\Theta,e_\Theta}$ is a principal $\fraksl_2\R$-triple in $\frakg_\Theta^\R$. We will call it the \emph{$\Theta$-principal $\fraksl_2\R$-triple}.

\subsection{Connection between \texorpdfstring{$\Theta$}{Theta}-principal and magical triples}

Up to appropriate notions of equivalence relations, $\Theta$-principal $\fraksl_2\R$-triples are in bijection with magical $\fraksl_2\C$-triples via a certain Cayley transform. A Cayley transform is a conjugation by $\SL(2,\C)$ that interchanges the subalgebras $\fraksl_2\R$ and $\mathfrak{su}_{1,1}$ inside $\fraksl_2\C$. Fix the complexified Cartan involution $\sigma(X)=-X^t$ for the real form $\fraksl_2\R$. There are exactly two distinct $\SL(2,\R)$-orbits of $\sigma$-stable Cartan subalgebras. For each of these, we can identify a standard triple as described next.

The triple
\begin{equation*}
  f =
  \begin{pmatrix}
    0 & 0 \\
    1 & 0
  \end{pmatrix}, \quad
  h =
  \begin{pmatrix}
    1 & 0 \\
    0 & -1
  \end{pmatrix}, \quad
  e =
  \begin{pmatrix}
    0 & 1 \\
    0 & 0
  \end{pmatrix}
\end{equation*}
corresponds to the maximal noncompact Cartan subalgebra $\R h$ (see \cite[Chapter VI]{Knapp}). Moreover, a second triple that corresponds to the maximal compact Cartan subalgebra $\R h'$ is given by
\begin{equation*}
  f' = \frac{1}{2}
  \begin{pmatrix}
    i & -1 \\
    -1 & -i
  \end{pmatrix}, \quad
  h' =
  \begin{pmatrix}
    0 & i \\
    -i & 0
  \end{pmatrix}, \quad
  e' = \frac{1}{2}
  \begin{pmatrix}
    i & 1 \\
    1 & -i
  \end{pmatrix}. 
\end{equation*}
Note that in \cite{CoMc}, the first triple $\{f,h,e\}$ as above is referred to as a \emph{Cayley triple}, which is defined by the property
\begin{align*}
  \sigma(f)=-e, \quad \sigma(h)=-h, \quad \sigma(e)=-f; 
\end{align*}
while the second one $\{f',h',e'\}$ is called a \emph{normal triple}, which is defined by the property
\begin{align*}
  \sigma(f')=-f', \quad \sigma(h')=h', \quad \sigma(e')=-e'. 
\end{align*}

There is an element in the complex Lie group $\SL(2,\C)$ that conjugates these Cartan subalgebras. Indeed, consider
\begin{equation*}
  g = \exp(-\frac{\pi}{4} i (e+f)) = \frac{1}{\sqrt{2}}
  \begin{pmatrix}
    1 & -i \\
    -i & 1
  \end{pmatrix}. 
\end{equation*}
Applying $\Ad(g)\cdot X = g^{-1} X g$ then gives
\begin{equation*}
  f' = \frac{1}{2}(e+f-ih), \quad h' = i(e-f), \quad e' = \frac{1}{2}(e+f+ih). 
\end{equation*}
We call this $\Ad(g)$ as above and its inverse \emph{Cayley transforms}. 

Let now $\pi_\Theta^\C: \fraksl_2\C \to \frakg := \frakg^\R \otimes_\R \C$ be the complexification of a $\Theta$-principal embedding. Let $G$ be the complexification of $G^\R$ and by a small abuse of notation let $g \in G$ denote the image of $g$ from above induced by the complexified $\Theta$-principal embedding. One now has the following:

\begin{theorem}\cite[Theorem 8.14]{Cayley}
  The triple $(f,h,e)$ is a $\Theta$-principal triple in $\mathfrak{g}^\R$ if and only if
  \begin{align*}
    (f',h',e') = \Ad(g)(f,h,e)
  \end{align*}
  is a magical $\fraksl_2\C$-triple in $\frakg$. 
\end{theorem}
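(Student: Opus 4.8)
My plan is to recognize that both notions are compatibilities with a complex-linear Cartan involution, linked through the Cayley transform. Write $\vartheta \colon \frakg \to \frakg$ for the complex-linear extension of the Cartan involution of $\frakg^\R$. First I would record the formal translation: a $\Theta$-principal triple satisfies $f = -\rho(e)$ and $h = [e,f]$, whence $\vartheta(e) = -f$, $\vartheta(f) = -e$ and $\vartheta(h) = [\vartheta(e),\vartheta(f)] = -h$, i.e. $(f,h,e)$ is a \emph{Cayley triple} for $\vartheta$. Substituting the explicit formulas $f' = \tfrac12(e+f-ih)$, $h' = i(e-f)$, $e' = \tfrac12(e+f+ih)$ then gives $\vartheta(e') = -e'$, $\vartheta(h') = h'$, $\vartheta(f') = -f'$, so $(f',h',e') = \Ad(g)(f,h,e)$ is a \emph{normal triple} for $\vartheta$. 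This reduces the theorem to comparing $\vartheta$ with the magical involution $\sigma$ attached to $(f',h',e')$.

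The comparison is the core of the argument. Both $\vartheta$ and $\sigma$ are complex-linear involutions; both commute with $\ad_{h'}$ (for $\sigma$ by definition, for $\vartheta$ since $\vartheta(h')=h'$), so both preserve each weight space $W(2m_j)_{2m_j-2i}$; and both restrict to $\fraks' = \ang{f',h',e'}$ as the normal-triple involution $e'\mapsto -e'$, $h'\mapsto h'$, $f'\mapsto -f'$. Hence $\sigma\vartheta$ is $\fraks'$-equivariant and is determined by its restriction to the highest weight spaces $W(2m_j)_{2m_j}$. Using $\vartheta(f')=-f'$ one gets $\vartheta = (-1)^i(\vartheta|_{\text{top}})$ on $W(2m_j)_{2m_j-2i}$, while $\sigma = (-1)^{i+1}$ there; comparing, $\sigma = \vartheta$ if and only if $\vartheta$ acts by $-1$ on each $W(2m_j)_{2m_j}$ with $m_j\ge 1$ and by $+1$ on the centralizer $W(0) = Z_\frakg(\fraks')$. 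Since $\vartheta$ is automatically a Lie algebra homomorphism, the identity $\sigma=\vartheta$ forces $\sigma$ to be one as well, that is, $(f',h',e')$ magical.

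For the forward implication I would derive exactly these signs from the $\Theta$-positive data: the highest weight spaces are the Slodowy directions $V_{2m_j}$, which under the transform correspond to the cone directions built from the $\Theta$-base, and the relations $f_\alpha = -\rho(e_\alpha)$ with $e_\alpha \in c_\alpha^\circ$ pin down the $-1$ eigenvalue there, while $\vartheta$ fixes $\frakc$ by the computation $\tau_\calC|_{\frakc} = \tau|_{\frakc}$ recorded earlier. For the converse, a magical $(f',h',e')$ supplies its magical involution $\sigma$, the canonical real form $\tau$, the compact form $\rho = \tau\sigma$, and the algebra $\frakg^\R_{\mathrm{can}} = \frakg^\tau$ whose complexified Cartan involution is precisely $\sigma$; applying $\Ad(g)^{-1}$ turns the normal triple into a Cayley triple for $\sigma$ inside $\frakg^\R_{\mathrm{can}}$, and I would recognize it as $\Theta$-principal by matching the classification of magical $\fraksl_2\C$-triples in \cite[Theorem 3.1]{Cayley} against the Guichard--Wienhard list of $\Theta$-positive real forms and $\Theta$-Weyl group types in \cite[Proposition 4.7]{GW}, family by family.

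The main obstacle is the sign computation of the second and third paragraphs: showing that $\vartheta$ acts by $-1$ on the highest weight spaces and by $+1$ on the centralizer is exactly where the $\Theta$-positive structure must intervene, since for an arbitrary normal triple neither sign is forced. In the converse direction the corresponding difficulty is to certify that the canonical real form of a magical triple genuinely carries a $\Theta$-positive structure rather than being merely some real form, which is what makes the appeal to the two classifications unavoidable.
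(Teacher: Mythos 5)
Note first that the paper you are working from does not prove this statement at all: it is quoted verbatim from \cite[Theorem 8.14]{Cayley}, so there is no in-paper argument to compare against, and your proposal has to stand on its own. Its formal skeleton does: the verification that $f=-\rho(e)$, $h=[e,f]$ make $(f,h,e)$ a Cayley triple for the complexified Cartan involution $\vartheta$, the check that $\Ad(g)$ converts it into a normal triple, and the observation that both $\sigma$ and $\vartheta$ commute with $\ad_{h'}$ and anticommute with $\ad_{e'},\ad_{f'}$, so that $\sigma\vartheta$ is $\fraks'$-equivariant and the comparison reduces to the highest weight spaces, are all correct. Granting $\sigma=\vartheta$, magicality of $(f',h',e')$ indeed follows since $\vartheta$ is an automorphism. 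This is a genuine reduction of the theorem.

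The gap is in the step you yourself flag as the crux, and your proposed justification for it does not work. In the forward direction you must show that $\vartheta$ acts by $-1$ on \emph{every} full multiplicity space $V_{2m_j}=W(2m_j)_{2m_j}$ and by $+1$ on all of $Z_\frakg(\fraks')$. Your appeal to the identity $\tau_\calC|_{\frakc}=\tau|_{\frakc}$ is circular: that computation in the paper is carried out for the involutions $\sigma$, $\rho=\tau\sigma$ and $\theta$ attached to a triple already known to be magical, so it presupposes exactly what the forward implication is supposed to establish. Likewise, the relations $f_\alpha=-\rho(e_\alpha)$ with $e_\alpha\in c_\alpha^\circ$ only control the finitely many $\Theta$-base vectors, whereas in families (3) and (4) of the Guichard--Wienhard classification ($\SO(p+1,p+k)$ and the $F_4$-type real forms) the relevant spaces, e.g.\ $\fraku_{\alpha_\Theta}^\R$ carrying a nontrivial $L_\Theta^{\R,0}$-module structure, have dimension greater than one; an involution acting by $-1$ on one interior cone point can still have mixed $\pm1$ eigenvalues on the whole highest weight space, so nothing yet pins down $\vartheta|_{V_{2m_j}}=-1$, nor the compactness statement $Z_\frakg(\fraks')\subset\frakg^{\vartheta}$. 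In the converse direction you defer entirely to ``matching the classifications family by family,'' which is in fact how the cited source argues, but as written you have neither identified the given real form $\frakg^\R$ with the canonical real form $\frakg^\tau$ of the magical triple (a conjugacy that must be produced, not assumed), nor shown that $\Ad(g)^{-1}$ of the magical triple lands in the $\Theta$-\emph{principal} conjugacy class compatibly with some choice of cones, rather than being merely a Cayley triple in the correct real algebra. Both directions therefore still rest on unexecuted case-by-case work, which is precisely the content of \cite[Theorem 3.1 and Theorem 8.14]{Cayley}.
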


\begin{remark} \hfill
  \begin{enumerate} 
  \item Note that given a magical triple, there is a canonical real form $\frakg^\R$ associated to it. 
  \item If we weaken the notion of equivalence to include conjugation by the entire complex Lie algebra $\frakg$, the $\Theta$-principal triple and the magical $\fraksl_2\C$-triple above are conjugate by the Cayley transform. Their adjoint representations on $\frakg$ are equivalent up to conjugation by $\frakg$. In particular, if $\sigma$ is a magical involution for a magical triple $(f,h,e)$, then $g^{-1}\circ \sigma \circ g$ is a magical involution for the corresponding $\Theta$-positive triple. Hence a $\Theta$-positive triple in $\frakg^\R$ is magical in $\frakg$, the complexification of the real Lie algebra $\frakg^\R$. 
  \end{enumerate}
\end{remark}

\section{Cayley uniformizing Higgs bundles}\label{sec:Cayley_uniform_Higgs}

Suppose $H_1, H_2 < G$ are two commuting subgroups. For $i=1,2$, let $\calP_{H_i}$ be principal $H_i$-bundles on $X$. Then we write
\begin{align*}
  \calP_{H_1} \star \calP_{H_2}(G) := (\calP_{H_1} \times_X \calP_{H_2}) \times_{H_1 \times H_2} G
\end{align*}
to denote the product $G$-bundle. Note that since the subgroups commute in $G$, we have
\begin{align*}
  \calP_{H_1} \star \calP_{H_2}(G) = \calP_{H_2} \star \calP_{H_1}(G). 
\end{align*}

\subsection{\texorpdfstring{$\C^*$}{C*}-fixed points}
\label{subsection:cstar-fixed-points}
A natural action of the group $\mathbb{C}^*$ exists on the moduli space $\mathcal{M}_L(G^{\mathbb{R}})$ given by $(\calP, \Phi) \mapsto (\calP, \lambda \Phi)$, for $\lambda \in \mathbb{C}^*$. This action preserves the stability condition and the vanishing of Chern classes. Since the underlying Riemann surface is smooth, there is an induced algebraic action of $\mathbb{C}^*$ on the moduli space $\mathcal{M}_L(G^{\mathbb{R}})$ following from the construction of the moduli space. The next proposition provides a full characterization of the $\mathbb{C}^*$-fixed points in the moduli space $\mathcal{M}_L(G^{\mathbb{R}})$.

\begin{proposition}{\cite[Proposition~5.5]{BGG2006}}
  \label{prop:Cstar-fixed-characterization}
  A polystable $L$-twisted $G^\R$-Higgs bundle $(\calP, \Phi) \in \calM_L(G^\R)$ is a $\C^*$-fixed point if and only if there is a semisimple infinitesimal gauge transformation $h_0 \in H^0(\calP(i\frakh^\R))$ that induces a characteristic grading on the adjoint bundle
  \begin{align*}
    \calP(\frakg) &= \bigoplus_{k \in \Z} \calP(\frakh)_k \oplus \bigoplus_{k \in \Z} \calP(\frakm)_k, 
  \end{align*}
  and $\Phi \in H^0(\calP(\frakm)_{-2}\otimes K)$. 
\end{proposition}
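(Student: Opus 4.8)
The plan is to prove both directions of this characterization of $\C^*$-fixed points, adapting the standard argument for complex Higgs bundles to the real setting. The key object is the one-parameter subgroup generated by a semisimple endomorphism $h_0$, and the main structural feature we exploit is that scaling the Higgs field by $\lambda \in \C^*$ must be undone by a gauge transformation if the point is fixed.

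\textbf{The forward direction.} Suppose $(\calP, \Phi)$ is $\C^*$-fixed. Then for each $\lambda \in \C^*$ the pairs $(\calP, \lambda\Phi)$ and $(\calP, \Phi)$ are isomorphic as $L$-twisted $G^\R$-Higgs bundles, so there is a holomorphic gauge transformation $g(\lambda) \in \calG_H$ with $g(\lambda)^* (\lambda \Phi) = \Phi$ and $g(\lambda)^* \calP \cong \calP$. First I would argue that since the moduli space carries an algebraic $\C^*$-action and $(\calP, \Phi)$ is a fixed point, one can choose the family $g(\lambda)$ to depend algebraically on $\lambda$ and to constitute a genuine one-parameter subgroup $\C^* \to \Aut(\calP)$ (up to the center acting trivially); this is where stability/polystability enters, guaranteeing that the automorphisms realizing the isomorphism are rigid enough to assemble into a homomorphism. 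Differentiating this one-parameter subgroup at $\lambda = 1$ produces an infinitesimal automorphism $h_0 \in H^0(\calP(\frakh))$, which is semisimple because it integrates to a $\C^*$-action; after averaging against the harmonic metric (equivalently, using the compact real form $\rho$) we may take $h_0 \in H^0(\calP(i\frakh^\R))$ to be self-adjoint with integer eigenvalues. The eigenspace decomposition of $\ad_{h_0}$ on $\calP(\frakg)$ gives the asserted characteristic grading $\calP(\frakg) = \bigoplus_k \calP(\frakh)_k \oplus \bigoplus_k \calP(\frakm)_k$, the splitting into $\frakh$- and $\frakm$-parts being respected because $h_0 \in \calP(\frakh)$ and $[\frakh,\frakh]\subset\frakh$, $[\frakh,\frakm]\subset\frakm$. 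Finally, differentiating the relation $g(\lambda)^*(\lambda\Phi) = \Phi$ at $\lambda = 1$ yields $\ad_{h_0}(\Phi) = -2\Phi$ (the $-2$ coming from the normalization of the grading together with the scaling weight), which says exactly that $\Phi$ is a section of the weight $-2$ summand, i.e. $\Phi \in H^0(\calP(\frakm)_{-2}\otimes K)$.

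\textbf{The converse.} Conversely, given a semisimple $h_0 \in H^0(\calP(i\frakh^\R))$ inducing an integer grading with $\Phi \in H^0(\calP(\frakm)_{-2}\otimes K)$, I would exponentiate to obtain a holomorphic one-parameter subgroup $g_t := \exp(t\, h_0) \in \Aut(\calP)$ for $t \in \C$. For $\lambda = e^{2t}$, the action of $g_t$ on the weight $-2$ part of the Higgs field scales $\Phi$ by $e^{-2t} = \lambda^{-1}$, so $g_t^* \Phi = \lambda^{-1}\Phi$, which rearranges to an isomorphism $(\calP, \lambda\Phi) \cong (\calP, \Phi)$ for every $\lambda$. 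Hence $(\calP,\Phi)$ is fixed by the $\C^*$-action on the moduli space.

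\textbf{Main obstacle.} The routine parts are the eigenspace bookkeeping and the exponentiation in the converse. The genuinely delicate step is the forward direction's claim that the isomorphisms $g(\lambda)$ can be chosen to form an \emph{algebraic} one-parameter subgroup with a well-defined infinitesimal generator, rather than merely existing pointwise for each $\lambda$. This requires invoking that the $\C^*$-action on $\calM_L(G^\R)$ is algebraic with the source being a smooth curve, so that a fixed point is stabilized by a one-parameter subgroup of the automorphism group, and then using polystability to ensure $\Aut(\calP,\Phi)$ is a (reductive) algebraic group in which such a subgroup can be diagonalized to produce the self-adjoint semisimple $h_0$. I would handle this by citing the general theory of $\C^*$-actions on moduli of Higgs bundles (as in the reference \cite{BGG2006}) and checking that the real structure imposed by $\rho$ is compatible, namely that averaging preserves the property of lying in $H^0(\calP(i\frakh^\R))$ and does not disturb the grading weights of $\Phi$.
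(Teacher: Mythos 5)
First, a point of comparison that constrains this review: the paper does not prove this proposition at all. It is imported verbatim from \cite{BGG2006} (their Proposition 5.5), so the paper's ``proof'' is the citation, and your proposal has to stand on its own. On its own terms, your converse direction is complete and correct: exponentiating $h_0$ to $g_t = \exp(t h_0)$ gives holomorphic gauge transformations of $\calP$ that scale the weight $-2$ summand, so $(\calP, \lambda\Phi) \cong (\calP,\Phi)$ for every $\lambda$ (and with the paper's rescaled normalization $\lambda\cdot(\calP,\Phi)=(\calP,\lambda^{-2}\Phi)$ your sign conventions match); note this direction needs no polystability. The eigenspace bookkeeping in the forward direction, including the splitting of the grading into $\calP(\frakh)_k$ and $\calP(\frakm)_k$ via $[\frakh,\frakh]\subset\frakh$ and $[\frakh,\frakm]\subset\frakm$, is also fine.

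The genuine gap is exactly the one you flag: promoting the pointwise isomorphisms $g(\lambda)$ to an algebraic one-parameter subgroup with a well-defined semisimple generator. A priori each $g(\lambda)$ exists separately, is unique only up to $\Aut(\calP,\Phi)$, and carries no continuity or homomorphism property whatsoever; and proposing to close this by ``citing the general theory as in \cite{BGG2006}'' is circular, since that citation \emph{is} the statement being proved. Two standard ways to actually close it, either of which your sketch should commit to: (i) Simpson's single-element trick, which avoids the family in $\lambda$ entirely --- fix one $\lambda_0$ that is not a root of unity and one $g$ with $\Ad(g)\Phi = \lambda_0^{-2}\Phi$, take the Jordan decomposition of $g$ in the reductive group $\Aut(\calP,\Phi)$-normalizer (reductivity is where polystability enters), and observe that $\ad_\Phi$ shifts the finitely many eigenbundles of the semisimple part by the factor $\lambda_0^{-2}$, which forces a finite grading in which $\Phi$ has pure weight, renormalizable to integers; or (ii) the harmonic-metric route --- by polystability and Theorem \ref{thm:Hitchin-Kobayashi}, the pairs $(\calP,\lambda\Phi)$ with $|\lambda|=1$ all admit the \emph{same} harmonic reduction by uniqueness, so the $g(\lambda)$ can be chosen inside the compact group of unitary holomorphic automorphisms, where a homomorphism from $\U(1)$ modulo the central stabilizer is automatically continuous and lifts after a finite cover; differentiating then lands $h_0$ in $H^0(\calP(i\frakh^\R))$ with integer eigenvalues after rescaling. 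Your ``averaging against the harmonic metric'' remark gestures at (ii), but the continuity-and-lifting step is precisely where the content sits, and as written it is asserted rather than proved.
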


\begin{remark}
  \hfill
  \begin{enumerate}
  \item We rescaled the $\C^*$-action in the above proposition to be $\lambda \cdot (\calP,\Phi) = (\calP, \lambda^{-2}\Phi), \lambda \in \C^*$ for convenience of notation, but it detects the same fixed points. 
  \item A Higgs bundle of the above form is also known as a \emph{Hodge bundle} when $L=K$ (see \cite[Lemma 4.1]{Simpson}.
  \end{enumerate}
\end{remark}

Let $\set{f,h,e}$ be a magical triple, $G^\R$ be the associated canonical real form, and $G_{\calC}^{\R,s}$ be the simple part of the Cayley real form. Let $C$ and $H$ be the complexification of the maximal compact subgroups of $G_{\calC}^{\R,s}$ and $G^\R$ respectively. 

\begin{proposition}{\cite[Proposition~7.20]{Cayley}}
  Let $\calP_C$ be a stable $C$-bundle. Then $(\calP_C \star \calP_{K^{1/2}}(H), f \, dz)$ is a $\C^*$-fixed point. 
\end{proposition}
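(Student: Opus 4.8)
The plan is to verify the fixed-point criterion of \Cref{prop:Cstar-fixed-characterization} directly: writing $\calP_H := \calP_C \star \calP_{K^{1/2}}(H)$, I must exhibit a semisimple section $h_0 \in H^0(\calP_H(i\frakh^\R))$ whose adjoint action induces a characteristic grading on $\calP_H(\frakg)$, and then check that the Higgs field $f\,dz$ is a holomorphic section of $\calP_H(\frakm)_{-2}\otimes K$. Everything is driven by the position of the triple $\set{f,h,e}$ relative to the two gradings at play, the $\ad_h$-weight grading and the $\pm 1$-eigenspace decomposition of the magical involution $\sigma$.

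First I would record the relevant weights. As the magical triple is even, $\fraks$ spans a copy of $W(2)$ in which $e,h,f$ carry $\ad_h$-weights $+2,0,-2$; evaluating the magical involution on this module gives $\sigma(e)=-e$, $\sigma(h)=h$, $\sigma(f)=-f$, so that $h \in \frakh$ and $e,f \in \frakm$. Since $\tau$ is conjugate-linear with $\tau(h)=-h$, one has $\tau(ih)=ih$, whence $h \in i\frakh^\R$; similarly $f \in \frakm_{-2} := \frakm \cap \frakg_{-2}$. Because $C$ has Lie algebra $\frakc = Z_\frakg(\fraks)$, it centralizes the entire triple, and the cocharacter $\chi_h\colon \C^* \to H$ used to build $\calP_{K^{1/2}}(H)$ fixes $h$ and $f$ under $\Ad$; these invariances let $h$ descend to a global section $h_0 \in H^0(\calP_H(i\frakh^\R))$ and $f$ to a global section of the appropriate weight subbundle. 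Tracking the twist, $\Ad(\chi_h(z))$ scales $\frakg_k$ by $z^k$, so $\calP_H(\frakg)=\bigoplus_k \calP_C(\frakg_k)\otimes K^{k/2}$, where each $\frakg_k$ is a $C$-module since $C$ centralizes $h$. As $\sigma$ preserves every $\frakg_k$, this refines to the splitting $\bigoplus_k \calP_H(\frakh)_k \oplus \bigoplus_k \calP_H(\frakm)_k$ required in \Cref{prop:Cstar-fixed-characterization}, with $h_0$ inducing exactly this characteristic grading.

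It then remains to place the Higgs field. For $k=-2$ the computation above gives $\calP_H(\frakm)_{-2}\otimes K = \calP_C(\frakm_{-2})\otimes K^{-1}\otimes K = \calP_C(\frakm_{-2})$, and the $C$-invariant vector $f \in \frakm_{-2}$ supplies a global holomorphic section of this bundle, represented in a local coordinate by $f\,dz$. Hence $\Phi = f\,dz \in H^0(\calP_H(\frakm)_{-2}\otimes K)$. Finally, stability of $\calP_C$ ensures that $(\calP_H, f\,dz)$ is polystable --- it is the image under the Cayley map $\Psi$ of the stable point $(\calP_C,0)$ with all differentials set to zero --- so that it represents a point of $\calM(G^\R)$ and \Cref{prop:Cstar-fixed-characterization} applies. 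Combined with the grading produced above, this shows the pair is a $\C^*$-fixed point. I expect the main obstacle to be the bookkeeping of the $K^{1/2}$-twist: one must confirm that the $\ad_h$-weight $-2$ of $f$ is precisely cancelled by the canonical-bundle factor so that $f\,dz$ becomes the globally defined, $C$-invariant section above, and at the same time verify that $h_0$ lands in the real subbundle $\calP_H(i\frakh^\R)$ rather than merely in $\calP_H(\frakh)$.
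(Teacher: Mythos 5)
Your proof is correct and takes essentially the same approach as the paper: the paper cites this statement from \cite{Cayley} without reproving it, but its proof of the generalization, Proposition \ref{prop:Cayley-Cstar-fixed-point}, runs the identical argument --- verify the criterion of Proposition \ref{prop:Cstar-fixed-characterization} with a constant semisimple section $h_0 = h + (m_{\calC}+1)h_{\calC}$, checking $h_0 \in i\frakh^\R$ via $\sigma(h)=h$, $\tau(h)=-h$ and that the Higgs field has $\ad_{h_0}$-weight $-2$ --- of which your argument with $h_0 = h$ is exactly the special case $\tilde{f}_{\calC}=0$. Your additional bookkeeping, namely the identification $\calP_H(\frakg) \cong \bigoplus_k \calP_C(\frakg_k)\otimes K^{k/2}$ placing $f\,dz$ in $H^0(\calP_H(\frakm)_{-2}\otimes K)$ and the polystability check via the Cayley map, is sound and merely makes explicit what the paper leaves implicit.
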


 We now introduce the following:

\begin{definition}
  We call $(\calP_{H},\Phi)$ a \emph{Cayley uniformizing Higgs bundle} if it is the image of a $\C^*$-fixed point $(\calP_C, f_{\calC}\,dz^{m_{\calC}+1}) \in \calM_{K^{m_{\calC}+1}}(G_\calC^{\R,s})$ under the Cayley map $\Psi$ (see \ref{eq:cayley-map}). That is, 
  \begin{itemize}
  \item $\calP_{H} = \calP_{C} \star \calP_{K^{1/2}}(H)$, where $\calP_{K^{1/2}} = \Fr(K^{1/2})$ is the frame bundle of a theta characteristic. 
  \item $\Phi = (f + \tilde{f}_{\calC}) \, dz \in H^0(\calP_{H}(\frakm)\otimes K)$, where $\ad_f^{m_{\calC}}(\tilde{f}_{\calC})=f_{\calC}$. 
  \end{itemize}
\end{definition}

Since $(\calP_C, f_{\calC})$ is a $\C^*$-fixed point in $\calM_{K^{m_{\calC}+1}}(G_\calC^{\R,s})$, we may complete $f_{\calC}$ into an $\fraksl_2$-triple $\set{f_{\calC}, h_{\calC}, e_{\calC}}$ in $\frakg_0^s = (\frakg_{\calC}^{\R,s})^\C$, where $h_{\calC} \in i\frakc^\R$, and $e_{\calC} = -\rho(f_{\calC})$.

Set $\tilde{e}_{\calC} = -\rho(\tilde{f}_{\calC})$. Also recall that $\rho(e) = -f$. Then
\begin{equation}
  \label{eq:tilde-e}
  \tilde{e}_{\calC} = -\rho(k \ad_e^{m_{\calC}}(f_{\calC})) = (-1)^{m_{\calC}}k \ad_f^{m_{\calC}}(e_{\calC}) \in \frakg_{-2m_{\calC}}, 
\end{equation}
for some scalar multiple $k$.

\begin{lemma}
  \label{lem:raising-bracket-commute-on-centralizer}
  Let $x, Y, Z \in \frakg$ and $Z \in Z_{\frakg}(x)$, then for any $n \in \N$, 
  \begin{align*}
    [\ad_x^n(Y), Z] &= \ad_x^n([Y,Z]). 
  \end{align*}
\end{lemma}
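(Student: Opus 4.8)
The plan is to prove the commutation identity
\[
[\ad_x^n(Y), Z] = \ad_x^n([Y,Z])
\]
by induction on $n \in \N$, using the Jacobi identity together with the hypothesis $Z \in Z_{\frakg}(x)$, that is $[x,Z]=0$. The key point is that $\ad_x$ acts as a derivation of the Lie bracket, so moving $\ad_x$ across the bracket produces two terms, one of which vanishes precisely because $Z$ centralizes $x$.

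For the base case $n=0$ the statement is the tautology $[Y,Z]=[Y,Z]$. For the inductive step, suppose the identity holds for some $n$; then I would compute
\[
[\ad_x^{n+1}(Y), Z] = [\ad_x(\ad_x^n(Y)), Z] = [[x,\ad_x^n(Y)], Z].
\]
Applying the Jacobi identity in the form $[[x,W],Z] = [x,[W,Z]] - [W,[x,Z]]$ with $W = \ad_x^n(Y)$, the second term $[W,[x,Z]]$ vanishes since $[x,Z]=0$. This leaves $[x,[\ad_x^n(Y),Z]] = \ad_x([\ad_x^n(Y),Z])$, and the inductive hypothesis rewrites the inner bracket as $\ad_x^n([Y,Z])$, yielding $\ad_x(\ad_x^n([Y,Z])) = \ad_x^{n+1}([Y,Z])$, as required.

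This is an elementary computation and I do not anticipate a genuine obstacle; the only care needed is keeping track of signs and the precise form of the Jacobi identity so that the term using $[x,Z]=0$ is correctly isolated. One can phrase the argument even more cleanly by noting that $\ad_x$ is a derivation, so that $\ad_x([W,Z]) = [\ad_x(W),Z] + [W,\ad_x(Z)]$; since $\ad_x(Z) = [x,Z] = 0$, the map $W \mapsto [W,Z]$ commutes with $\ad_x$, and iterating this commutation $n$ times gives the claim directly.
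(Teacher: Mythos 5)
Your proof is correct and essentially identical to the paper's: both argue by induction on $n$, applying the Jacobi identity in the form $[\ad_x^{n+1}(Y),Z] = [x,[\ad_x^n(Y),Z]] - [\ad_x^n(Y),[x,Z]]$ and killing the second term via $[x,Z]=0$ before invoking the inductive hypothesis. Your closing remark that $\ad_x$ is a derivation, so that $W \mapsto [W,Z]$ commutes with $\ad_x$, is just a clean repackaging of the same computation.
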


\begin{proof}
  When $n=0$ the statement is true. Suppose the equality holds for $n=k$. By the Jacobi identity, we have
  \begin{align*}
    [\ad_x^{k+1}(Y), Z] &= [x, [\ad_x^k(Y), Z]] - [\ad_x^k(Y), [x, Z]] \\
                        &= [x, \ad_x^k([Y,Z])] = \ad_x^{k+1}([Y,Z]), 
  \end{align*}
  where we used that $[x,Z]=0$ since $Z \in Z_{\frakg}(x)$, and the induction hypothesis. 
\end{proof}

We now have the following characterization: 
\begin{proposition}
  \label{prop:Cayley-Cstar-fixed-point}
  A Cayley uniformizing Higgs bundle $(\calP_H, \Phi)$ is a $\C^*$-fixed point. In other words, if $(\calP_C, f_{\calC} \, dz^{m_{\calC}+1})$ is a $\C^*$-fixed point in $\calM_{K^{m_{\calC}+1}}(G_{\calC}^{\R,s})$, then $(\calP_C \star \calP_{K^{1/2}}(H), (f + \tilde{f}_{\calC}) \, dz)$ is a $\C^*$-fixed point, where $\ad_f^{m_{\calC}}(\tilde{f}_{\calC}) = f_{\calC}$. 
\end{proposition}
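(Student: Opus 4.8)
The plan is to apply the characterization of $\C^*$-fixed points from Proposition \ref{prop:Cstar-fixed-characterization}: I must exhibit a semisimple infinitesimal gauge transformation $h_0 \in H^0(\calP_H(i\frakh^\R))$ whose adjoint action induces the characteristic grading on $\calP_H(\frakg)$ and under which the Higgs field $\Phi = (f + \tilde{f}_{\calC})\,dz$ lies in the $(-2)$-eigenspace, i.e.\ $\Phi \in H^0(\calP_H(\frakm)_{-2}\otimes K)$. The natural candidate is built from the grading element $h$ of the magical triple together with the grading element $h_{\calC}$ coming from the $\C^*$-fixed structure on $(\calP_C, f_{\calC})$; since $C$ and $H$ commute and $h \in \frakg^H$ while $h_{\calC}$ is associated to the $C$-factor, the sum $h_0 := \tfrac{1}{2}(h + h_{\calC})$ (or an appropriate normalization) is a global section of $\calP_H(i\frakh^\R)$ by construction of $\calP_H = \calP_C \star \calP_{K^{1/2}}(H)$.

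First I would verify that $h_0$ is semisimple and that it defines the required characteristic grading: this amounts to a Lie-algebra computation showing $\ad_{h_0}$ has integer eigenvalues and that $[h_0, \cdot]$ respects the Cartan-type decomposition $\calP_H(\frakg) = \bigoplus_k \calP_H(\frakh)_k \oplus \bigoplus_k \calP_H(\frakm)_k$. Since $\set{f,h,e}$ is even (every eigenvalue of $\ad_h$ is even by the magical classification) and $\set{f_{\calC}, h_{\calC}, e_{\calC}}$ is an $\fraksl_2$-triple in $\frakg_0^s$, both gradings are integral and commute because $h$ acts trivially on the $C$-factor and $h_{\calC} \in i\frakc^\R = \frakg_0 \cap$ centralizer, so $[h, h_{\calC}]=0$. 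The compatibility with the $\calP_{K^{1/2}}$ twist follows from the fact that the frame bundle of a theta characteristic carries precisely the $\C^*$-weight grading matching $\ad_h$.

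The key step is the eigenvalue computation for $\Phi$. Here $f \in \frakm$ with $\ad_h(f) = -2f$, so $f$ sits in weight $-2$ for $\ad_h$. For $\tilde{f}_{\calC}$, I would use the defining relation $\ad_f^{m_{\calC}}(\tilde{f}_{\calC}) = f_{\calC}$ together with the fact that $f_{\calC} \in \frakg_0^s$ is a lowest-weight vector for $\ad_{h_{\calC}}$ (so $\ad_{h_{\calC}}(f_{\calC}) = -2 f_{\calC}$) and lies in $\frakg_0$ with respect to $\ad_h$. Unwinding $\tilde{f}_{\calC} = \ad_f^{-m_{\calC}}(f_{\calC})$ via Lemma \ref{lem:raising-bracket-commute-on-centralizer}, applied with $x = f$ and $Z = h_{\calC}$ (recalling $[f, h_{\calC}] = 0$ since $h_{\calC}$ centralizes $\fraks$), shows that $\ad_{h_{\calC}}(\tilde{f}_{\calC}) = -2\tilde{f}_{\calC}$, while $\ad_h$ lowers the weight by $2m_{\calC}$ so that $\tilde{f}_{\calC} \in \frakg_{-2m_{\calC}}$ as recorded in \eqref{eq:tilde-e}. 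I then check that the combined operator $\ad_{h_0}$ assigns weight $-2$ to both $f$ and $\tilde{f}_{\calC}$ simultaneously, which is exactly what forces the normalization of $h_0$ to be chosen so that the $h$-contribution and the $h_{\calC}$-contribution together produce $-2$ on each summand.

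The main obstacle I anticipate is getting this normalization of $h_0$ exactly right so that \emph{both} components of $\Phi$ land in weight $-2$ at once: $f$ is graded by $\ad_h$ alone (weight $-2$) and is $\ad_{h_{\calC}}$-neutral, whereas $\tilde{f}_{\calC}$ carries weight $-2m_{\calC}$ under $\ad_h$ and weight $-2$ under $\ad_{h_{\calC}}$. The resolution is to take $h_0$ to be a suitable $\Z$-linear (not merely averaged) combination $h_0 = \tfrac{1}{m_{\calC}+1}h + (\text{correction involving } h_{\calC})$ engineered so that $\ad_{h_0}(f) = -2f$ and $\ad_{h_0}(\tilde{f}_{\calC}) = -2\tilde{f}_{\calC}$; the $(m_{\calC}+1)$ denominator reflects that $f_{\calC} \, dz^{m_{\calC}+1}$ is $K^{m_{\calC}+1}$-valued on the $C$-side while $\Phi$ is $K$-valued after applying $\ad_f^{-m_{\calC}}$, and matching these conventions is the delicate bookkeeping. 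Once $h_0$ is pinned down and shown to be a global $i\frakh^\R$-section (which uses that $\tilde{f}_{\calC}$, hence the correction term, is built $C$-equivariantly from the $\C^*$-fixed data on $\calP_C$), Proposition \ref{prop:Cstar-fixed-characterization} yields the claim directly.
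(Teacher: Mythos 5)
Your strategy is the paper's: apply Proposition \ref{prop:Cstar-fixed-characterization} with a semisimple element $h_0$ built from $h$ and $h_{\calC}$, and use Lemma \ref{lem:raising-bracket-commute-on-centralizer} (with $h_{\calC} \in \frakc = Z_{\frakg}(\fraks)$) to get $[h_{\calC}, \tilde{f}_{\calC}] = -2\tilde{f}_{\calC}$; that part of your argument is sound. But there is a genuine sign error at the crux: since $\ad_f^{m_{\calC}}(\tilde{f}_{\calC}) = f_{\calC} \in \frakg_0$ and $\ad_f$ \emph{lowers} the $\ad_h$-weight by $2$, the element $\tilde{f}_{\calC}$ lies in $V_{2m_{\calC}} \subset \frakg_{+2m_{\calC}}$, so $[h, \tilde{f}_{\calC}] = +2m_{\calC}\,\tilde{f}_{\calC}$, not $-2m_{\calC}\,\tilde{f}_{\calC}$ as you assert. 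The equation (\ref{eq:tilde-e}) you cite in support concerns the \emph{opposite} element $\tilde{e}_{\calC} = -\rho(\tilde{f}_{\calC}) \in \frakg_{-2m_{\calC}}$, not $\tilde{f}_{\calC}$.

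This sign error derails the "engineering" of $h_0$, which you in any case leave unresolved, and your two candidate normalizations both fail. Since $[h_{\calC}, f] = 0$, the coefficient of $h$ in $h_0 = a\,h + b\,h_{\calC}$ is forced by $[h_0, f] = -2f$ to be $a = 1$: your $h_0 = \tfrac{1}{2}(h + h_{\calC})$ gives $[h_0, f] = -f$, and your fallback $h_0 = \tfrac{1}{m_{\calC}+1}h + (\text{correction})$ gives $f$ weight $-\tfrac{2}{m_{\calC}+1}$, and no $h_{\calC}$-correction can repair either, precisely because $h_{\calC}$ centralizes $f$. With $a=1$ and the correct weights $[h, \tilde{f}_{\calC}] = 2m_{\calC}\tilde{f}_{\calC}$, $[h_{\calC}, \tilde{f}_{\calC}] = -2\tilde{f}_{\calC}$, the condition $[h_0, \tilde{f}_{\calC}] = -2\tilde{f}_{\calC}$ reads $2m_{\calC} - 2b = -2$, forcing $b = m_{\calC}+1$; this is exactly the paper's $h_0 = h + (m_{\calC}+1)h_{\calC}$ of Equation (\ref{eq:h0-semisimple}). (Had you carried your wrong sign through, you would instead have been led to $b = 1 - m_{\calC}$, the wrong element.) Two smaller points you only gesture at, which the paper checks explicitly: that $h_0 \in i\frakh^\R$ (using $\tau(h) = -h$, $\sigma(h) = h$ and $h_{\calC} \in i\frakc^\R \subset \frakh$), and that $f + \tilde{f}_{\calC} \in \frakm$ (via $f_{\calC} \in \frakm_{\calC}$ embedded in $Z_{2m_{\calC}}$ by Equation (\ref{eq:ZmC-frakmC}), whence $\tilde{f}_{\calC} \in V_{2m_{\calC}} \subset \frakm$), which is needed for $\Phi \in H^0(\calP_H(\frakm)_{-2}\otimes K)$ rather than merely weight $-2$ in $\calP_H(\frakg)$.
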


\begin{proof}
  Since $(\calP_C, f_{\calC})$ is a fixed point, Proposition \ref{prop:Cstar-fixed-characterization} tells us that there exists a semisimple element $h_{\calC} \in H^0(\calP_C(i\frakc^\R))$ such that $f_{\calC}$ is a degree $-2$ map for the grading induced by $\ad_{h_{\calC}}$. 

  \noindent Let
  \begin{align}
    \label{eq:h0-semisimple}
    h_0 := h + (m_{\calC}+1)h_{\calC}.
  \end{align}
  We claim that this gives the desired semisimple element in $H^0(\calP_H(i\frakh^\R))$. Indeed, since $\tau(h)=-h$ and $h_{\calC} \in i\frakc^\R$, we have $h_0 \in i\frakg^\R$. Then $\sigma(h)=h$ and $h_{\calC} \in i\frakc^\R \subset \frakh$ implies that $h_0 \in \frakh \cap i\frakg^\R = i\frakh^\R$.

  \noindent Note that $f_{\calC} \in \frakm_\calC$ implies that we may embed $f_{\calC}$ in $Z_{2m_{\calC}}$ by Equation (\ref{eq:ZmC-frakmC}). Hence, $\tilde{f}_{\calC} \in V_{2m_{\calC}} \subset \frakm$ and $f + \tilde{f}_{\calC} \in \frakm$. Now consider
  \begin{align*}
    [h_0, f + \tilde{f}_{\calC}] = -2f + 2m_{\calC}\tilde{f}_{\calC} + [(m_{\calC}+1)h_{\calC}, \tilde{f}_{\calC}]. 
  \end{align*}
  Note that $\tilde{f}_{\calC} \in V_{2m_{\calC}}$ is of weight $2m_{\calC}$, and $[h_{\calC},f]=0$ since $h_{\calC} \in \frakc$. Finally we may view $\tilde{f}_{\calC} = k \ad_e^{m_{\calC}}(f_{\calC})$ for some scalar $k$, then by Lemma \ref{lem:raising-bracket-commute-on-centralizer} we have $[h_{\calC}, \tilde{f}_{\calC}] = -2\tilde{f}_{\calC}$. Hence $f + \tilde{f}_{\calC}$ is of degree $-2$ with respect to the $\ad_{h_0}$-grading on $\frakg$ as required by Proposition \ref{prop:Cstar-fixed-characterization}. 
\end{proof}

\begin{remark} \hfill
  \begin{enumerate}
  \item Similar to the proof above, we also have $[h_0, e+\tilde{e}_\calC] = 2(e+\tilde{e}_\calC)$.
  \item Note that $f+\tilde{f}_{\calC}, h_0, e+\tilde{e}_{\calC}$ need not form an $\fraksl_2$-triple. However $h_0$ will be useful to us because we understand how it acts on the $\ad_h$-weight spaces for $h$ in the magical triple. 
  \end{enumerate}
\end{remark}

\begin{lemma}
  \label{lem:h0-gauge-action}
  Using the same notation as above, for $R^{h_0} := \exp(\log(R)h_0)$, we have
  \begin{align}
    \label{eq:h0-gauge-action}
    R^{h_0} \cdot (f + \tilde{f}_{\calC}) = R^{-2}(f + \tilde{f}_{\calC}), \quad R^{h_0}\cdot (e + \tilde{e}_{\calC}) = R^{2}(e + \tilde{e}_{\calC}). 
  \end{align}
\end{lemma}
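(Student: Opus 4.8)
The plan is to reduce the claim to the elementary fact that a one-parameter group of the form $\exp(t\,\ad_{h_0})$ acts on $\ad_{h_0}$-eigenvectors by scaling. First I would recall that the infinitesimal gauge transformation $h_0 \in H^0(\calP_H(i\frakh^\R))$ produced in Proposition \ref{prop:Cayley-Cstar-fixed-point} integrates, for $R \in \R^+$, to the genuine gauge transformation $R^{h_0} = \exp(\log(R)\,h_0)$, and that its action on a section of the adjoint bundle $\calP_H(\frakg)$ (and hence on the Higgs field, a section of $\calP_H(\frakm)\otimes K$) is fiberwise the adjoint action. Thus it suffices to show that $\Ad(R^{h_0})$ scales $f + \tilde{f}_{\calC}$ by $R^{-2}$ and $e + \tilde{e}_{\calC}$ by $R^{2}$.

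The key step is the identity $\Ad(\exp(\log(R)\,h_0)) = \exp(\log(R)\,\ad_{h_0})$, which follows from the compatibility of the exponential map with the adjoint representation. Combined with this, I would use the eigenvalue data already in hand: Proposition \ref{prop:Cayley-Cstar-fixed-point} shows that $[h_0, f+\tilde{f}_{\calC}] = -2(f+\tilde{f}_{\calC})$, and the remark immediately following it records $[h_0, e+\tilde{e}_{\calC}] = 2(e+\tilde{e}_{\calC})$. Since these are honest $\ad_{h_0}$-eigenvectors, expanding the exponential series collapses to a single scaling factor: for an eigenvector $X$ with $\ad_{h_0}(X) = \lambda X$ one has
\[
  \exp(\log(R)\,\ad_{h_0})(X) = \sum_{n \ge 0} \frac{(\log R)^n}{n!}\,\ad_{h_0}^n(X) = \Big(\sum_{n\ge 0} \frac{(\lambda \log R)^n}{n!}\Big) X = e^{\lambda \log R}\,X = R^{\lambda}\,X.
\]
Applying this with $\lambda = -2$ to $X = f + \tilde{f}_{\calC}$ and with $\lambda = 2$ to $X = e + \tilde{e}_{\calC}$ yields exactly the two stated identities (\ref{eq:h0-gauge-action}).

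I do not anticipate a genuine obstacle here; the computation is routine once the eigenvalue relations are available. The only points deserving a moment's care are bookkeeping ones: confirming that $R^{h_0}$ is well-defined as a real gauge transformation (which holds because $h_0 \in i\frakh^\R$ and $\log R \in \R$ for $R \in \R^+$), and noting that the relevant gauge action on the Higgs field is the adjoint action acting trivially on the $K$-factor, so that the scaling on the $\frakm$-component is exactly as computed above.
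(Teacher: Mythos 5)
Your proposal is correct and follows essentially the same route as the paper's proof: both reduce the claim to the identity $\Ad(\exp(\log(R)h_0)) = \exp(\log(R)\ad_{h_0})$ and then use the eigenvalue relations $[h_0, f+\tilde{f}_{\calC}] = -2(f+\tilde{f}_{\calC})$ and $[h_0, e+\tilde{e}_{\calC}] = 2(e+\tilde{e}_{\calC})$ to collapse the exponential series to the scalars $R^{\mp 2}$. The paper carries out the series expansion explicitly for the $f$-case and notes the $e$-case is analogous, which is exactly the computation you package into the general eigenvector formula.
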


\begin{proof}
  We will compute the adjoint action of $R^{h_0}$ on $f+\tilde{f}_\calC$. The action on $e+\tilde{e}_\calC$ can be checked similarly. We have
  \begin{align*}
    R^{h_0} \cdot (f + \tilde{f}_{\calC})      =& \Ad(\exp(\log(R)h_0))(f + \tilde{f}_\calC) \\
    =& \exp(\ad (\log(R)h_0))(f + \tilde{f}_\calC) \\
    =& \sum_{n=0}^\infty \frac{1}{n!}(\ad (\log(R)h_0))^n(f + \tilde{f}_\calC) \\
    =& (f + \tilde{f}_\calC) + [\log(R)h_0, (f+\tilde{f}_\calC)] + \frac{1}{2}[\log(R)h_0, [\log(R)h_0, (f+\tilde{f}_\calC)]] + \cdots \\
    =& (f + \tilde{f}_\calC) + (-2\log(R))(f + \tilde{f}_\calC) + \frac{(-2\log(R))^2}{2}(f+\tilde{f}_\calC) + \cdots \\
    =& \exp(-2\log(R))(f + \tilde{f}_\calC) \\
    =& R^{-2}(f + \tilde{f}_\calC). 
  \end{align*}
\end{proof}

\subsection{Slodowy slices at uniformizing Higgs bundles}\label{sec:Slodowy_uniform}

Let $p_0=(\calP_H, \Phi_0)$ be a Cayley uniformizing fixed point and let
\begin{align*}
  \hat{\frakm}_\calC = \set{ X \in \frakm_{\calC} \mid [h_{\calC}, X] = nX, n \ge 0 }. 
\end{align*}
Consider the base space
\begin{equation}\label{base_space}
  \calB_{p_0} := H^0(\calP_C(\hat{\frakm}_\calC) \otimes K^{m_{\calC}+1}) \oplus \sum_{i=1}^{r(e)} H^0(K^{l_i+1}).
\end{equation}
Then, for each $u \in \calB_{p_0}$ given by
\[u= (P_{\calC} \, dz^{m_{\calC}+1}, P_1 \, dz^{l_1+1}, \ldots, P_{r(e)} \, dz^{l_{r(e)}+1}),\]
we get a Higgs bundle $p_u=(\calP_H,\Phi_u)$, where for $\Phi_0 = (f + \tilde{f}_{\calC}) \, dz$, the Higgs field $\Phi_u$ is given by 
\begin{align}\label{Phi_u}
  \Phi_u = \Bigl(f + \tilde{f}_{\calC} + P_{\calC} X_{\calC} + \sum_{i=1}^{r(e)} P_i X_i \Bigr) \, dz, 
\end{align}
where $X_{i}$ is the highest weight vector in $S^{m_i}(\C^2)$. As a sum of irreducible representations of highest weight $2m_i$ for a $\Theta$-principal triple, we have
\begin{align*}
  W({2m_i}) \simeq Z_{2m_i} \otimes S^{m_i}(\C^2). 
\end{align*}

We now introduce the following: 

\begin{definition}\label{def:Slodowy_slice}
  For a Cayley uniformizing fixed point $p_0=(\calP_H, \Phi_0)$ and for the base space  $\calB_{p_0}$ as in (\ref{base_space}), the set of all such $(\calP_H,\Phi_u)$ is called a \emph{Slodowy slice} at the uniformizing fixed point $p_0$.
\end{definition}

\subsection{Harmonic reductions on Cayley uniformizing Higgs bundles}

Take the Cayley uniformizing Higgs bundle $p_0=(\mathcal{P}_H, \Phi_0)$ corresponding to the origin of the base space 
$$\mathcal{B}=H^0\left( \mathcal{P}_H(\mathfrak{m}) \otimes K\right).$$ 
From the uniformization theorem, there is a unique Riemannian metric $g_{\natural}$ of constant curvature -4 for the conformal class determined by the complex structure on the Riemann surface $X$. For a fixed parameter $R \in \mathbb{R}^+$, we consider, more generally, the unique metric $g_{\natural}/R^2$ with constant curvature $-4R^2$.  Let $\calQ_{\natural}(R)$ be the corresponding reduction of structure group of $\calP_{K^{1/2}}$ to $\U(1)$, for a choice of square root $K^{1/2}$ of the canonical line bundle $K$. 

Let $(\calP_C, f_\calC)$ be a point in the moduli space $\calM_{K^{m_{\calC}+1}}(G_{\calC}^{\R,s})$. By the Kobayashi--Hitchin correspondence, there is a harmonic reduction of structure group $\calQ_{\calC}$ of the bundle $\calP_C$ to the maximal compact subgroup $C^\R$.
This is inducing a natural metric on the bundle $\mathcal{P}_H$ as follows. 

\begin{definition}\label{def:harmonic_red}
  Let $\calQ_{0}(R) \subset \mathcal{P}_C \times_{X} \mathcal{P}_{K^{1/2}}$ be the natural reduction of structure group of the bundle $\mathcal{P}_C \times _{X} \mathcal{P}_{K^{1/2}}$. The \emph{natural reduction of structure group} $\calQ_{0}(R)$ of the bundle $\mathcal{P}_H=\mathcal{P}_C \star \mathcal{P}_{K^{1/2}}(H)$ is given by 
  \[\calQ_{0}(R)= \calQ_{\calC} \star \calQ_{\natural}(H) \subset \mathcal{P}_C \star \mathcal{P}_{K^{1/2}}(H) = \mathcal{P}_H.\]
\end{definition}

Abbreviating $\calQ_{0}(R=1)$ as $\calQ_{0}$, we have $\calQ_{0}(R)=R^{-h_0} \calQ_{\natural}$, for the semisimple element $h_0$ in the proof of Proposition \ref{prop:Cayley-Cstar-fixed-point}. 

For a Cayley uniformizing Higgs bundle $(\calP_H, \Phi_0)$, let $\calQ_0$ be the reduction of structure group to $H^\R$ from solving Hitchin's equations, and let $\rho_{\calQ_0}$ be the induced involution on $\ad \calP_H$ which leaves $\ad \calQ_0$ invariant. Let $D_{\calQ_0}$ be the Chern connection in $\ad \calQ_0$ with $D_{\calQ_0}^{(0,1)} = \delbar_{\calP_H}$. From the Kobayashi--Hitchin correspondence, we have 
\begin{align*}
  F(D_{\calQ_0}) + [(f + \tilde{f}_{\calC}) \, dz, -\rho_{\calQ_0}(f + \tilde{f}_{\calC}) \, d\bar{z}] = F(D_{\calQ_0}) + [(f + \tilde{f}_{\calC}) \, dz, (e + \tilde{e}_{\calC}) \, d\bar{z}] = 0. 
\end{align*}
Now let $\calQ_0(R)$ be the reduction of structure group associated to the involution
\begin{align*}
  \rho_{\calQ_0(R)}:=R^{-h_0} \circ \rho_{\calQ_0}, 
\end{align*}
where $R^{-h_0}:=\Ad(\exp(-\log(R)h_0))$ similar to Lemma \ref{lem:h0-gauge-action}. 

Analogously to \cite[Proposition 4.8]{Opers16} we then have the following:

\begin{proposition}
  The harmonic reduction on the Cayley uniformizing Higgs bundle $(\mathcal{P}_H, \Phi_0)$ with parameter $R$ is $\calQ_{0}(R)$. 
\end{proposition}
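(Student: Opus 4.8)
The plan is to verify the rescaled Hitchin equation \eqref{resc_Hit_eq:1} directly for the reduction $\calQ_0(R)$, by transporting the $R=1$ equation under the gauge transformation $R^{-h_0}$ and checking that the $R^2$ factor emerges precisely from the grading action recorded in Lemma \ref{lem:h0-gauge-action}. The starting data is the harmonic reduction $\calQ_0$ at $R=1$, which by the Kobayashi--Hitchin correspondence satisfies
\begin{align*}
  F(D_{\calQ_0}) + [(f + \tilde{f}_{\calC}) \, dz, (e + \tilde{e}_{\calC}) \, d\bar{z}] = 0,
\end{align*}
and the definitions $\calQ_0(R) = R^{-h_0}\calQ_0$, $\rho_{\calQ_0(R)} = R^{-h_0}\circ\rho_{\calQ_0}$. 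The point to establish is that this $\calQ_0(R)$ is harmonic with parameter $R$, i.e. that
\begin{align*}
  F(D_{\calQ_0(R)}) - R^2[\Phi_0, \rho_{\calQ_0(R)}(\Phi_0)] = 0.
\end{align*}

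First I would record how the two terms transform under $R^{-h_0}$. The Chern connection attached to the gauge-transformed reduction satisfies $D_{\calQ_0(R)} = R^{-h_0} D_{\calQ_0} R^{h_0}$, and since $h_0$ is constant (it lies in $H^0(\calP_H(i\frakh^\R))$) the curvature transforms by conjugation, $F(D_{\calQ_0(R)}) = \Ad(R^{-h_0}) F(D_{\calQ_0})$. Next I would compute the bracket term. By definition $\rho_{\calQ_0(R)}(\Phi_0) = R^{-h_0}\rho_{\calQ_0}((f+\tilde{f}_{\calC})\,dz) = R^{-h_0}((e+\tilde{e}_{\calC})\,d\bar{z})$, and applying Lemma \ref{lem:h0-gauge-action} gives $R^{-h_0}(e+\tilde{e}_{\calC}) = R^{-2}(e+\tilde{e}_{\calC})$. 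Meanwhile the Higgs field itself is unchanged, so
\begin{align*}
  R^2[\Phi_0, \rho_{\calQ_0(R)}(\Phi_0)] = R^2 \cdot R^{-2}[(f+\tilde{f}_{\calC})\,dz, (e+\tilde{e}_{\calC})\,d\bar{z}] = [(f+\tilde{f}_{\calC})\,dz, (e+\tilde{e}_{\calC})\,d\bar{z}],
\end{align*}
so the explicit $R$-scaling cancels on the bracket side.

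Combining the two computations, the equation to be verified becomes $\Ad(R^{-h_0})F(D_{\calQ_0}) + [(f+\tilde{f}_{\calC})\,dz, (e+\tilde{e}_{\calC})\,d\bar{z}] = 0$. Here I would use that $\Phi_0$, $\rho_{\calQ_0}(\Phi_0)$ and hence their bracket are themselves homogeneous of weight $0$ for the $\ad_{h_0}$-grading: indeed $f+\tilde{f}_{\calC}$ has $h_0$-weight $-2$ and $e+\tilde{e}_{\calC}$ has $h_0$-weight $+2$ by the Remark following Proposition \ref{prop:Cayley-Cstar-fixed-point}, so their bracket has weight $0$ and is therefore fixed by $\Ad(R^{-h_0})$. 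Applying $\Ad(R^{-h_0})$ to the $R=1$ equation and using this invariance then yields exactly the displayed identity, completing the verification.

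The main obstacle I anticipate is the bookkeeping around the behavior of the Chern connection and its curvature under the $R^{-h_0}$ transformation: one must be careful that $D_{\calQ_0(R)}$ really is conjugate to $D_{\calQ_0}$ (which rests on $h_0$ being a genuine holomorphic section, so that the holomorphic structure $\delbar_{\calP_H}$ is preserved and only the $(1,0)$-part $\partial^{\calQ_0(R)}$ is twisted), and that $F(D_{\calQ_0(R)})$ is computed from the correct $(1,0)$-part $\rho_{\calQ_0(R)}\circ\partial_{\calP_H}\circ\rho_{\calQ_0(R)}$ as in the definition of harmonic reductions. Once the transformation laws for the curvature and the involution are pinned down, the weight-counting via Lemma \ref{lem:h0-gauge-action} and the $\ad_{h_0}$-grading makes the cancellation of the $R$-factors immediate, in direct analogy with \cite[Proposition 4.8]{Opers16}.
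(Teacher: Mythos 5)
Your proposal is correct and follows essentially the same route as the paper's proof: transport the $R=1$ Hitchin equation for $\calQ_0$ under $R^{-h_0}$, use Lemma \ref{lem:h0-gauge-action} to extract the factor $R^{-2}$ from $\rho_{\calQ_0(R)}(\Phi_0)$ cancelling the $R^2$ in the rescaled equation, and observe that $F(D_{\calQ_0})$ is $\Ad(R^{-h_0})$-invariant because it equals a bracket of $\ad_{h_0}$-weight $\pm 2$ elements and hence has weight $0$. The bookkeeping you flag about the Chern connection is handled in the paper exactly as you anticipate, via $D_{\calQ_0(R)}^{(0,1)}=\delbar_{\calP_H}$ and the twisted $(1,0)$-part.
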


\begin{proof}

  \noindent Let $D_{\calQ_0(R)}$ be the Chern connection in $\ad \calQ_0(R)$ with $D_{\calQ_0(R)}^{(0,1)} = \delbar_{\calP_H}$. Then
  \begin{align*}
    &F(D_{\calQ_0(R)}) + R^2[(f + \tilde{f}_{\calC}) \, dz, -\rho_{\calQ_0(R)}(f + \tilde{f}_{\calC}) \, d\bar{z}] \\
    =& R^{-h_0} \circ F(D_{\calQ_0}) + R^2[(f + \tilde{f}_{\calC}) \, dz, R^{-h_0}(e + \tilde{e}_{\calC}) \, d\bar{z}] \\
    =& F(D_{\calQ_0}) + R^2[(f + \tilde{f}_{\calC}) \, dz,  R^{-2}(e + \tilde{e}_{\calC}) \, d\bar{z}], \quad \text{by Lemma \ref{lem:h0-gauge-action}} \\
    =& 0. 
  \end{align*}
  Note that $R^{-h_0} \circ F(D_{\calQ_0}) = F(D_{\calQ_0})$ because $e+\tilde{e}_{\calC}$ and $f+\tilde{f}_{\calC}$ are of weights $\pm 2$ with respect to $\ad_{h_0}$, and so their bracket must be of weight $0$ with respect to $\ad_{h_0}$. 
\end{proof}

\section{\texorpdfstring{$\Theta$}{Theta}-positivity and opers}

\subsection{A construction of \texorpdfstring{$G$-opers ($\Theta$-positive $G$-opers)}{G-opers (Theta-positive G-opers)}}
We next develop a general construction of a family of $G$-opers for a magical triple that is coming from Fuchsian uniformization. The way we construct this family is not via considering the $\Theta$-Slodowy functor of \cite[Definition 5.3]{CS}, but rather in the original way of Drinfeld--Sokolov \cite{DS1}, \cite{DS2}. The benefit from constructing a family in this way is two-fold. On the one hand, this allows us to relate our families of $G$-opers to corresponding explicit families of differential operators which are of independent interest. On the other hand, we show in the sequel that precisely these families appear as the scaling limits of Gaiotto. For a principal $\mathfrak{sl}_2$-triple, such families of $G$-opers have been constructed in \cite[Sections 4.8--4.9]{Opers16}.   

The construction has its origins in the work of Drinfeld--Sokolov \cite{DS1}, \cite{DS2}. However, Zucchini \cite[Section 3]{Zucchini} gives a description for a general $G$-bundle for a complex simple Lie group $G$ that is particularly useful for our cause.

\subsection{\texorpdfstring{$(G,P)$}{(G,P)}-opers and Slodowy slices}

Let $G$ be a connected complex semisimple Lie group. Let $\fraks=\ang{f,h,e}$ be an even $\fraksl_2$-triple. Recall that the Lie algebra has a characteristic decomposition $\frakg = \oplus_{k \in \Z} \frakg_k$ with respect to $\ad_h$. Let
\begin{align*}
  \frakp_\fraks = \sum_{k \ge 0} \frakg_k
\end{align*}
be the parabolic subalgebra associated to $\fraks$. It comes with a Levi decomposition for the Levi subalgebra $\frakl_\fraks = \frakg_0$ and the nilpotent subalgebra $\fraku_\fraks = \sum_{k > 0} \frakg_k$. Let $P_\fraks < G$ be the normalizer subgroup of $\fraku_\fraks$; its Lie algebra is $\frakp_\fraks$. 

\begin{definition}
  \label{defn:g-p-opers}
  Let $\fraks = \ang{f,h,e}$ be an even $\fraksl_2$-triple, and let $P=P_\fraks$ be the parabolic subgroup associated to $\fraks$. A \emph{$(G,P)$-oper associated to $\fraks$} on $X$ is a triple $(\calP_G, \calP_{P}, \omega)$ where $\calP_G$ is a holomorphic principal $G$-bundle on $X$, with a holomorphic reduction of structure group $\calP_{P}$, and $\omega$ is a holomorphic principal connection on $\calP_G$ whose second fundamental form
  \begin{align*}
    \Psi_\omega: \ad \calP_{P} \hookrightarrow \ad \calP_G \xrightarrow{\omega} \mathfrak{g} \twoheadrightarrow \mathfrak{g/p}
  \end{align*}
  takes values in the unique open dense orbit $P \cdot f$. 
\end{definition}

A morphism between two $(G,P)$-opers $(\calP_G,\calP_P,\omega)$ and $(\calQ_G,\calQ_P,\eta)$ is an \emph{isomorphism} of holomorphic principal $G$-bundles $\Phi: \calP_G \to \calQ_G$, whenever $\Phi|_{\calP_P}: \calP_P \simeq \calQ_P$ and $\Phi^*\eta=\omega$.


We next investigate $(G,P)$-opers in the case when $\mathfrak{s}$ is furthermore a $\Theta$-principal triple.

\begin{definition}\label{def:theta_pos_oper}
  Let $\fraks = \ang{f, h, e}$ be a $\Theta$-principal triple. Let $P_\Theta = P_\fraks$ be the parabolic subgroup associated to $\fraks$. Then we call a $(G,P_\Theta)$-oper in the sense above, a \emph{$\Theta$-positive oper}. 
\end{definition}

A \emph{Slodowy slice of $\Theta$-positive opers} at a Cayley uniformizing fixed point $p_0 = (\calP_H, \Phi_0)$, where $\Phi_0 = (f + \tilde{f}_{\calC}) \, dz$, is now determined as follows. We have $\calP_H = \calP_C \star \calP_{K^{1/2}} (H)$, where the star product comes from an embedding of a product of commuting subgroups
\begin{align*}
  m: C \times \C^* \hookrightarrow H.
\end{align*}
From the construction of the Cayley map (see \cite[Section~5.2]{Cayley}), we know that the embedded product subgroup $C \cdot \C^* = m(C \times \C^*)$ has Lie algebra $\frakc \oplus \ang{h} \subset \frakg_0$. Therefore $C \cdot \C^*$ normalizes $\fraku_\Theta = \oplus_{k > 0} \frakg_k$, and thus is a subgroup of $P_\Theta$.

Let $\calP_G = \calP_C\star \calP_{K^{1/2}}(G)$. Let $\calP_{P_\Theta} = \calP_C \star \calP_{K^{1/2}}(P_\Theta)$, which is well-defined by the above discussion. A \textit{Cayley uniformizing oper} is defined by the triple $(\calP_G, \calP_{P_\Theta}, \omega_0)$, where $\omega_0$ is the holomorphic connection on $\calP_G$ given by
\begin{align*}
  \omega_0 = \del^{\calQ_0} + \Phi_0. 
\end{align*}

The Cayley uniformizing oper is identified with the Cayley uniformizing Higgs bundle under the Non-abelian Hodge Correspondence. Since they carry the same data, we will abuse notation and also denote the uniformizing oper by $p_0$. We now set the following:

\begin{definition}\label{Slodowy_Theta_oper}
  Let $u \in \calB_{p_0}$ be a point as in (\ref{base_space}) and $\Phi_u$ as in (\ref{Phi_u}). The \textit{Slodowy slice of $\Theta$-positive opers} at the Cayley uniformizing oper is the set $\set{(\calP_G, \calP_{P_\Theta}, \omega_u)}$, where $\omega_u$ is the holomorphic connection on $\calP_G$ given by 
  \begin{align*}
    \omega_u = \del^{\calQ_0} + \Phi_u.   \end{align*}
  We will also consider the \textit{Slodowy slice of $\Theta$-positive opers at parameter} $\hbar \in \mathbb{C}^*$ to be given by the family $\set{(\calP_G, \calP_{P_\Theta}, \omega_u)}_{\hbar}$, where $\omega_u$ is the holomorphic connection on $\calP_G$ given by 
  $\omega_u = \del^{\calQ_0} + \hbar^{-1}\Phi_u$. 
\end{definition}

\begin{remark}  
  Note that by construction of the point $\Phi_u$ as in (\ref{Phi_u}) it follows directly that the second fundamental form takes values in the unique open dense orbit $P_{\Theta} \cdot f$, for $f$ the element in the $\Theta$-principal $\mathfrak{sl}_2$-triple. Thus, the family  $\set{(\calP_G, \calP_{P_\Theta}, \omega_u)}_{\hbar}$ above is indeed a family of  $\Theta$-positive opers in the sense of Definition \ref{def:theta_pos_oper}.
\end{remark}

In the rest of the present section, we make explicit the relationship between these families of $\Theta$-positive opers and certain families of generalized projective structures on a smooth surface. 

\subsection{Projective structures}

A (complex) \textit{projective structure} $Z$ on an oriented, connected smooth surface $S$ is a maximal atlas of smooth charts valued in $\CP^1$ with M\"obius transformations as transition maps. We write $Z$ to mean a smooth surface with projective structure. A smooth map $f: Z \to W$ is \textit{locally M\"obius} if around every $x \in Z$ and $f(x) \in W$, there are projective charts $(U, \phi)$ in $Z$ and $(V, \psi)$ in $W$ with $f(U) \subset V$, such that the coordinate representation $\hat{f} = \psi \circ f \circ \phi^{-1}$ is a M\"{o}bius transformation. 

Given a universal covering $p: \tilde{S} \to S$, we can lift a projective structure on $S$ to a projective structure $\tilde{Z}:=p^*Z$ on $\tilde{S}$. Then, a \textit{developing map} for $Z$ is a locally M\"{o}bius immersion $f: \tilde{Z} \to \CP^1$. Fix a projective structure $\tilde{Z}$ on the universal cover of $S$. A \textit{development-holonomy pair} on $S$ is a pair $(f,\rho)$, where $\rho: \pi_1(S) \to \PSL(2,\C)$ is a group homomorphism and $f: \tilde{Z} \to \CP^1$ is a locally M\"obius immersion which is $\pi_1(S)$-equivariant with respect to deck transformations on the source and the action given by $\rho$ on the target. 

\begin{proposition}[\cite{Dumas}]
  \label{prop:proj-str-dev-hol}
  There is a bijective correspondence between projective structures $Z$ on a surface $S$ up to marked isomorphism and development-holonomy pairs $(f,\rho)$ on $S$ up to an $\PSL(2,\C)$ action. 
\end{proposition}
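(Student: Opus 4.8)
The plan is to recognize this as the standard equivalence for $(G,X)$-structures in the case $G=\PSL(2,\C)$ and $X=\CP^1$, and to construct explicit maps in both directions that invert one another after passing to the stated quotients. Concretely, I would produce from a projective structure $Z$ a development-holonomy pair $(f,\rho)$, produce from a pair $(f,\rho)$ a projective structure on $S$, and then verify that each construction is well defined modulo the indicated equivalences (marked isomorphism on one side, the post-composition action $(f,\rho)\mapsto(g\circ f,\,g\rho g^{-1})$ of $g\in\PSL(2,\C)$ on the other) and that the two assignments are mutually inverse.

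For the forward direction, first I would lift $Z$ to the pulled-back structure $\tilde{Z}=p^*Z$ on the simply connected cover $\tilde{S}$ and build a developing map by analytic continuation of charts. Starting from a base chart $(U_0,\phi_0)$, along any path I extend $\phi_0$ by composing successive charts with their M\"obius transition maps; because the transitions are globally defined M\"obius transformations of $\CP^1$, each local continuation is an immersion into $\CP^1$, and simple-connectedness of $\tilde{S}$ forces the continuation to be path-independent, yielding a globally defined locally M\"obius immersion $f:\tilde{Z}\to\CP^1$. For the holonomy, I would use that $\pi_1(S)$ acts on $\tilde{S}$ by deck transformations preserving $\tilde{Z}$; for $\gamma\in\pi_1(S)$ both $f$ and $f\circ\gamma$ are developing maps for $\tilde{Z}$, so by uniqueness of the developing map up to post-composition there is a unique $\rho(\gamma)\in\PSL(2,\C)$ with $f\circ\gamma=\rho(\gamma)\circ f$; computing $f\circ(\gamma_1\gamma_2)$ in two ways then shows $\rho$ is a homomorphism.

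For the reverse direction, given $(f,\rho)$ I would observe that $\tilde{Z}=f^*(\text{std})$ is the pullback along the immersion $f$ of the tautological projective structure of $\CP^1$. Since each $\rho(\gamma)$ is M\"obius and hence preserves that structure, the $\rho$-equivariance $f\circ\gamma=\rho(\gamma)\circ f$ gives $\gamma^*\tilde{Z}=\tilde{Z}$, so $\tilde{Z}$ is deck-invariant and descends to a projective structure $Z$ on $S$. Checking that this is inverse to the forward construction, and that replacing $f$ by $g\circ f$ replaces $\rho$ by $g\rho g^{-1}$ while yielding a marked-isomorphic structure, then establishes the bijection.

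I expect the main obstacle to be the two rigidity facts underlying both directions: the monodromy-type argument that the chart continuation is path-independent on the simply connected cover, so that $f$ is globally well defined and an immersion into $\CP^1$ rather than merely a germ; and the uniqueness of a developing map up to a single element of $\PSL(2,\C)$. The latter rests on unique continuation for locally M\"obius maps -- two such maps agreeing to first order at one point must agree on the connected domain -- which is what simultaneously makes $\rho$ well defined as a homomorphism and pins down the $\PSL(2,\C)$-ambiguity. The remaining verifications, namely functoriality under marked isomorphisms and that the two constructions are mutually inverse, are then routine bookkeeping.
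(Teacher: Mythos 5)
Your proposal is correct and takes essentially the same route as the paper's source: the paper states this proposition without proof, citing Dumas, and the standard $(G,X)$-structure argument you outline — developing by analytic continuation of charts on the universal cover, defining the holonomy via uniqueness of developing maps up to post-composition, pulling back the tautological structure on $\CP^1$ for the inverse construction, and checking equivariance, descent, and the two quotient identifications — is precisely the proof given there. One small correction to your closing remark on the rigidity fact: two locally M\"obius maps agreeing to \emph{first} order at a point need not coincide (e.g.\ $z$ and $z/(1-z)$ both fix $0$ with derivative $1$ there); since $\PSL(2,\C)$ is three-dimensional, the correct statement requires agreement of $2$-jets at a point, or equivalently agreement on an open set — which is all your construction actually uses, as the continued germs agree on open overlaps of charts, so the slip does not affect the argument.
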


Now we recall what is a $\PSL(2,\C)$-oper on a Riemann surface $X$. Let $B$ be a fixed Borel subgroup of $G=\PSL(2,\C)$. A \textit{$\PSL(2,\C)$-oper} on $X$ is a triple $(\calP_G,\calP_B,\omega)$, where $\calP_G$ is a holomorphic principal $G$-bundle, $\calP_B$ is a reduction of structure group to $B$, and $\omega$ is a holomorphic connection on $\calP_G$ with values in an open dense $B$-orbit $\mathcal{O} \subset \mathfrak{g}/\mathfrak{b}$.

The passage from a projective structure to a $\PSL(2,\C)$-oper can be given through the language of development-holonomy pairs. Given a projective structure on $X$, let $(f,\rho)$ be the development-holonomy pair associated to it by Proposition \ref{prop:proj-str-dev-hol}. Then, $\calP_G:=\tilde{X} \times_\rho \PSL(2,\C)$ is a holomorphic principal $G$-bundle on $X$. The developing map $f: \tilde{X} \to \CP^1$ gives a reduction of structure group, since $\CP^1 = G/B = \set{0 \subset L \subset \C^2}$ for $L$ one dimensional subspaces, and $f$ gives a global section of $\calP_G\times_G G/B$. The reduction of structure group to $B$ is given by the pullback bundle: 
\begin{align*}
  \calP_B := f^*(\calP_G\times_G B) = \set{ [(x,g)] \in \calP_G \mid f(x) = gB \in G/B }. 
\end{align*}
Let $\omega=d+df$, where $d$ is the Einstein--Hermitian connection (see \cite{AB}) on $\calP_G$. We have $df \not\equiv 0$ since by definition the developing map $f$ is locally injective. Hence $df$ takes values in the unique open $B$-orbit $\calO \subset \mathfrak{g}/\mathfrak{b}$ as we view the target space as the flag variety $G/B$. Clearly the triple $(\calP_G,\calP_B,\omega)$ defines a $\PSL(2,\C)$-oper. 

We explain the open dense $B$-orbit condition on the connection $\omega$ through the following example. Fix the Borel subgroup $B$ to be the upper triangular matrices. We have $\mathfrak{g}$ consisting of the $2 \times 2$ traceless matrices and $\mathfrak{b}$ consisting of the traceless upper triangular matrices. The Borel subgroup $B$ acts on $\mathfrak{g}/\mathfrak{b}$ with the adjoint action: 
\begin{equation}
  \begin{pmatrix}
    a & b \\
    0 & a^{-1} 
  \end{pmatrix} \cdot \begin{pmatrix}
    0 & 0 \\
    c & 0 
  \end{pmatrix} \cdot \begin{pmatrix}
    a^{-1} & -b \\
    0 & a
  \end{pmatrix} = \begin{pmatrix}
    a^{-1}bc & -b^2c \\
    a^{-2}c & -a^{-1}bc
  \end{pmatrix}. 
\end{equation}
Then $c \neq 0$ gives the unique open $B$-orbit in $\frakg/\frakb$. 

\subsection{Generalized opers}

Here we include a short description of the generalized $\sfB$-opers of \cite{BSY2020}. In some cases they give an explicit vector bundle formulation for the $\Theta$-positive opers, when the structure group is $\Sp(2n,\C)$, and the associated $\C^*$-fixed point agrees with the $\Theta$-principal or magical triple. 

Let $X$ be a compact Riemann surface of genus $g \ge 2$. Let $p: E \to X$ be a holomorphic vector bundle on $X$ equipped with a holomorphic nondegenerate bilinear from $\sfB: E \otimes E \to \calO_X$. A \textit{$\sfB$-connection} on $E$ is a holomorphic connection $D$ compatible with the bilinear form $\sfB$. That is, for any smooth sections $s, t \in \Omega^0(E)$, 
\begin{align*}
  \partial (\sfB(s,\, t))\,=\, \sfB(D(s),\, t) + \sfB(s,\, D(t)). 
\end{align*}
When $s, t$ are holomorphic sections, we have $\partial (\sfB(s,\, t))\,=\, d(\sfB(s,\, t))$. 

From now on we assume that $\sfB$ is reflexive, so that the following is well-defined. The \textit{orthogonal complement} of a holomorphic subbundle $F \subset E$ with respect to $\sfB$ is
\begin{align*}
  F^\perp := \set{w \in E \mid \sfB_{p(w)}(w,v)=0, \, \text{ for all  }\, v \in F_{p(w)}}. 
\end{align*}
A reflexive bilinear form over $\C$ is either alternating or symmetric, so without loss of generality we may assume that $\sfB$ is either a symplectic or a symmetric form. 

\begin{proposition}[\cite{BSY2020}]
  \label{prop:orthogonal-complement}
  The orthogonal complement $F^\perp$ satisfies the following properties.
  \begin{enumerate}
  \item $F^\perp \subset E$ is a holomorphic subbundle. 
  \item $F^\perp$ is canonically isomorphic to the dual bundle $(E/F)^*$.
  \item $(F^\perp)^\perp = F$. 
  \end{enumerate}
\end{proposition}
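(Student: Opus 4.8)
The plan is to prove the three properties of the orthogonal complement $F^\perp$ of a holomorphic subbundle $F \subset E$ with respect to a reflexive nondegenerate bilinear form $\sfB$. The strategy is entirely fiberwise, exploiting the fact that $\sfB$ induces at each point a nondegenerate pairing, hence a vector-space isomorphism $E_x \to E_x^*$, and then promoting the pointwise statements to the bundle level by checking holomorphy and constancy of rank.

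First I would set up the fundamental algebraic gadget. Nondegeneracy of $\sfB$ gives, at each $x \in X$, an isomorphism $\flat_x : E_x \to E_x^*$ sending $w \mapsto \sfB_x(w, -)$; since $\sfB$ is holomorphic and nondegenerate, these assemble into a holomorphic bundle isomorphism $\flat : E \to E^*$. By the very definition, $F^\perp_x$ is the preimage under $\flat_x$ of the annihilator $(F_x)^\circ \subset E_x^*$, i.e. those functionals vanishing on $F_x$. The annihilator $(F_x)^\circ$ is canonically $(E_x/F_x)^*$. This single observation is the engine driving all three parts. For \emph{(1)}, I would argue that $F^\perp = \flat^{-1}\bigl(\ker(E^* \twoheadrightarrow F^*)\bigr)$, i.e. $F^\perp$ is the kernel of the composite holomorphic bundle map $E \xrightarrow{\flat} E^* \twoheadrightarrow F^*$; since $F \subset E$ is a subbundle the restriction map $E^* \to F^*$ is a surjection of holomorphic bundles of constant rank, so its kernel is a holomorphic subbundle, and $F^\perp$, being the $\flat$-image of that kernel, is too. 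This simultaneously establishes holomorphy and shows that $F^\perp$ has constant rank $\rank E - \rank F$, so it is genuinely a subbundle and not just a subsheaf.

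For \emph{(2)} I would make the canonical identification explicit: the composite $E \xrightarrow{\flat} E^* \to (E/F)^*$ (dualizing the quotient $E \to E/F$ gives the inclusion $(E/F)^* \hookrightarrow E^*$, and I would instead use the surjection $E^* \to F^*$ whose kernel is exactly $(E/F)^*$) restricts to an isomorphism $F^\perp \xrightarrow{\sim} (E/F)^*$. Concretely, $w \in F^\perp$ maps to the functional on $E/F$ induced by $\sfB_x(w, -)$, which is well-defined precisely because $w$ annihilates $F_x$; injectivity and the rank count from (1) force this to be an isomorphism. For \emph{(3)}, the inclusion $F \subseteq (F^\perp)^\perp$ is immediate from the symmetry or antisymmetry of $\sfB$ (any $v \in F$ pairs to zero against every element of $F^\perp$, by definition of $F^\perp$), and then I would invoke the rank count twice: applying the formula $\rank F^\perp = \rank E - \rank F$ to $F^\perp$ gives $\rank (F^\perp)^\perp = \rank E - \rank F^\perp = \rank F$, so the inclusion of subbundles of equal rank is an equality.

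I do not anticipate a deep obstacle here; the content is linear algebra made global. The main subtlety worth flagging is that reflexivity is what guarantees $F \subseteq (F^\perp)^\perp$ in (3): a general nondegenerate $\sfB$ is nondegenerate but the double-perp argument uses the symmetry relation $\sfB_x(v,w) = \pm\sfB_x(w,v)$, which is exactly what being symmetric or alternating provides (and the paper has already reduced to this case). The other point requiring a little care is ensuring constancy of rank so that we are dealing with subbundles rather than merely coherent subsheaves; this is handled once and for all by realizing $F^\perp$ as the kernel of a morphism of holomorphic bundles of locally constant rank, using that $F \subset E$ is assumed to be a subbundle.
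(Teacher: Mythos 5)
Your proof is correct and follows essentially the same route as the cited source \cite{BSY2020} (the present paper states this proposition without proof): the holomorphic isomorphism $\flat\colon E \to E^*$ induced by nondegeneracy of $\sfB$, the identification of $F^\perp$ with the kernel of $E^* \twoheadrightarrow F^*$, i.e.\ with $(E/F)^*$, a rank count giving $\rank F^\perp = \rank E - \rank F$, and reflexivity (symmetry or antisymmetry) to get $F \subseteq (F^\perp)^\perp$ and conclude by equality of ranks. One cosmetic slip: $F^\perp$ is the $\flat$-\emph{preimage} of that kernel rather than its image, but since $\flat$ is an isomorphism this is immaterial.
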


\begin{definition}
  \label{defn:b-filtration}
  A \textit{$\sfB$-filtration} of $E$ is a partial flag of holomorphic subbundles $\set{F_i}$: 
  \begin{align*}
    0 = F_0 \subsetneq F_1 \subsetneq F_2 \subsetneq \cdots \subsetneq F_n = E
  \end{align*}
  such that
  \begin{enumerate}
  \item the filtration length $n$ is odd when $\sfB$ is symplectic, and even when $\sfB$ is symmetric;     
  \item $F_i/F_{i-1}$ are holomorphic vector bundles; and
  \item $F_i^\perp = F_{n-i}$, for all $1 \le i \le n-1$. 
  \end{enumerate}
\end{definition}

In particular, the first half of a $\sfB$-filtration is fiberwise an isotropic flag of vector spaces. When $\sfB$ is symmetric and $n$ is odd, the middle subbundle is Lagrangian. 

\begin{definition}
  \label{defn:gen-b-opers}
  A \emph{generalized $\sfB$-oper} is a triple $(E,\set{F_i},D)$, where $\set{F_i}$ is a $\sfB$-filtration and $D$ is a $\sfB$-connection on $E$ that satisfies the conditions
  \begin{enumerate}
  \item (Griffiths transversality) $D(F_i) \subset F_{i+1} \otimes K$ for all $1 \le i \le n-1$; 
  \item (Nondegeneracy) for every $1 \le i \le n-1$, the second fundamental form of $D$ with respect to $F_i$ induces an isomorphism
    \begin{align*}
      S_D: F_i/F_{i-1} &\xrightarrow{\simeq} (F_{i+1}/F_i)\otimes K \\
      [s] &\mapsto [D(s)]. 
    \end{align*}
  \end{enumerate}
\end{definition}

\subsection{Generalized \texorpdfstring{$\sfB$}{B}-opers vs \texorpdfstring{$\Theta$}{Theta}-positive opers}

The following proposition follows directly from \cite[Theorem~4.1]{Yang2021} and provides a clear relationship between $\Theta$-positive opers in the case of the Lie group $G = \Sp(2n,\C)$ and generalized $\sfB$-opers.

\begin{proposition}
  Let $(E, \set{F_1,F_2}, D)$ be a generalized $\sfB$-oper, where
  \begin{itemize}
  \item $0 \to F_1 \to F_2$ for $E=F_2$ a rank $2n$ holomorphic vector bundle with a holomorphic symplectic form and $F_1$ a rank $n$ holomorphic subbundle
  \item $S_D: F_1 \xrightarrow{\simeq} F_2/F_1 \otimes K$ for $S_D$ the second fundamental form of $F_1$ for $D$. 
  \end{itemize}
  This is equivalent to the data of a $\Theta$-positive oper, for $G = \Sp(2n,\C)$ and $P_\Theta$ the parabolic subgroup associated to the $\Theta$-principal triple for $\Theta=\set{\alpha_n}$ the long restricted root in type $C$. 
\end{proposition}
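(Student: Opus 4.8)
The plan is to pass between the Lie-theoretic description of a $\Theta$-positive oper and the vector-bundle description of a generalized $\sfB$-oper through the standard $2n$-dimensional representation of $\Sp(2n,\C)$, which is precisely the content of the dictionary recorded in \cite[Theorem~4.1]{Yang2021}. Concretely, I would first fix the standard symplectic representation $\Sp(2n,\C) \hookrightarrow \GL(2n,\C)$ and associate to a holomorphic principal $\Sp(2n,\C)$-bundle $\calP_G$ the rank $2n$ bundle $E = \calP_G \times_{\Sp(2n,\C)} \C^{2n}$, which inherits a holomorphic nondegenerate alternating form $\sfB$ from the invariant symplectic form on $\C^{2n}$. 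A holomorphic principal connection $\omega$ on $\calP_G$ then induces on $E$ exactly a $\sfB$-connection $D$, since preservation of the symplectic form is the defining condition of $\fraksp(2n,\C) \subset \frakgl(2n,\C)$; this assignment $(\calP_G,\omega) \mapsto (E,\sfB,D)$ is an equivalence, functorial in isomorphisms.

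Next I would identify the parabolic $P_\Theta$ explicitly. For $G = \Sp(2n,\C)$ the relevant real form is $\Sp(2n,\R)$, which is Hermitian of tube type, so we are in the Hermitian tube-type case (Case (2) of the classification); the corresponding magical, equivalently $\Theta$-principal, triple $\fraks=\ang{f,h,e}$ is even with a single nonzero weight $2m_\calC=2$, giving the three-step grading
\begin{align*}
  \fraksp(2n,\C) = \frakg_{-2} \oplus \frakg_0 \oplus \frakg_{2},
\end{align*}
with $\frakg_0 \cong \frakgl(n,\C)$ and $\frakg_{-2},\frakg_{2}$ the spaces of symmetric bilinear forms $\Sym^2((\C^n)^*)$ and $\Sym^2(\C^n)$ respectively. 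Hence $\frakp_\fraks = \frakg_0 \oplus \frakg_2$ is the Siegel parabolic, i.e. the stabilizer of a Lagrangian subspace $\C^n \subset \C^{2n}$, and $P_\Theta = P_\fraks$ is its normalizer. I would then note that a reduction of structure group $\calP_{P_\Theta} \hookrightarrow \calP_G$ is the same as a choice of rank $n$ holomorphic Lagrangian subbundle $F_1 \subset E$, which by Proposition \ref{prop:orthogonal-complement} satisfies $F_1^\perp = F_1$ and gives precisely the $\sfB$-filtration $0 \subsetneq F_1 \subsetneq F_2 = E$ with $F_2/F_1 \cong F_1^*$.

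It then remains to match the two nondegeneracy conditions. The second fundamental form $\Psi_\omega$ of Definition \ref{defn:g-p-opers} takes values in $\frakg/\frakp_\fraks \cong \frakg_{-2}$, and under the standard representation it is identified with the $\calO_X$-linear second fundamental form $S_D \colon F_1 \to (F_2/F_1) \otimes K$ of $D$ with respect to $F_1$; because $D$ preserves $\sfB$ and $F_1$ is Lagrangian, $S_D$ is symmetric under $F_2/F_1 \cong F_1^*$, so it is a section of $\Sym^2(F_1^*) \otimes K$, matching $\frakg_{-2}$. The unique open dense orbit $P_\Theta \cdot f$ in $\frakg_{-2}$ is exactly the locus of \emph{nondegenerate} symmetric forms, so $\Psi_\omega$ landing in $P_\Theta \cdot f$ is equivalent to $S_D$ being a fibrewise isomorphism $F_1 \xrightarrow{\sim} (F_2/F_1)\otimes K$, that is, to the nondegeneracy condition of Definition \ref{defn:gen-b-opers}. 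Griffiths transversality $D(F_1) \subset F_2 \otimes K$ is automatic here since $F_2 = E$. Assembling these identifications over all of $X$ gives the asserted equivalence of data.

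The step I expect to be the main obstacle is the explicit identification of $P_\Theta$ with the Siegel parabolic together with the compatibility of the two second fundamental forms: one must verify that the $\Theta$-principal grading is the three-grading above rather than some coarser or finer grading, pin down $f \in \frakg_{-2}$ as a nondegenerate element, and check carefully that symplectic-compatibility of $D$ forces $S_D$ to be symmetric, so that ``open orbit'' and ``isomorphism'' genuinely coincide. Once these are in place, everything else is a direct translation through \cite[Theorem~4.1]{Yang2021}.
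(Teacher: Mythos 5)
Your proposal is correct and follows essentially the same route as the paper: the paper's entire proof is the observation that the statement ``follows directly from \cite[Theorem~4.1]{Yang2021}'', and what you have written is a faithful unpacking of exactly that dictionary --- passing through the standard symplectic representation, identifying $P_\Theta$ for $\Theta=\set{\alpha_n}$ with the Siegel parabolic via the three-grading $\frakg_{-2}\oplus\frakg_0\oplus\frakg_2$ with abelian nilradical $\Sym^2(\C^n)$, matching reductions of structure group with Lagrangian subbundles $F_1=F_1^\perp$, and checking that symplectic-compatibility of $D$ makes $S_D$ symmetric so that the open dense orbit $P_\Theta\cdot f$ corresponds precisely to fibrewise nondegeneracy of $S_D$. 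All of these verifications are sound (in particular the unipotent radical acts trivially on $\frakg/\frakp_\fraks$, so the orbit is the Levi orbit of nondegenerate symmetric forms, as you implicitly use), so your write-up supplies correct detail where the paper simply cites.
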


\begin{remark} \hfill
  \begin{enumerate}
  \item It should be noted that the generalized $\sfB$-opers only describe a subset of the $\Theta$-positive opers, when the associated $\C^*$-fixed point arises from a $\Theta$-principal triple. In other words, the corresponding ``Cayley partner'' is not only stable as a Higgs bundle, but also stable as a vector bundle. Other $\Theta$-positive opers are associated to $\C^*$-fixed points taking the form of a nilpotent element of higher nilpotency degree than the $\Theta$-principal nilpotents.
    
  \item In general, for classical groups which have canonical representations, we expect the $\Theta$-positive opers to be vector bundles which admit a filtration given by the associated $\C^*$-fixed point, equipped with a compatible holomorphic connection whose second fundamental form is of maximal rank on the associated graded bundles.
    
  \item Finally, for the exceptional groups, we may obtain a vector bundle formulation of the $\Theta$-positive opers given a choice of faithful representation. 
  \end{enumerate}
\end{remark}

\section{Conformal limits}

We are now ready to state and prove our main theorem relating the two families of flat connections we introduced in the previous Sections, namely the 2-parameter family of flat connections coming from solutions of the Hitchin equations for the Cayley uniformizing Higgs bundles and the family of flat connections from $\Theta$-positive opers.

\begin{theorem}\label{main_theorem}
  Fix any point $u \in \mathcal{B}_{p_0}$ from \ref{base_space}. Let $(\mathcal{P}_H, \Phi_u)$ be a Higgs bundle in the Slodowy slice as in Definition \ref{def:Slodowy_slice} with $\calQ(R,u) \subset \mathcal{P}_H$ be the family of harmonic reductions solving the rescaled Hitchin equations (\ref{resc_Hit_eq:1}) and (\ref{resc_Hit_eq:2}). Fix a parameter $\hbar \in \mathbb{C}^*$ and consider the 2-parameter family of flat connections
  \begin{equation}\label{2-par_fam_flat_conn}
    \nabla_{R,\hbar,u}= \hbar^{-1}\Phi_u +D_{\calQ(R,u)}-\hbar R^2\rho_{\calQ(R,u)}(\Phi_u).
  \end{equation}
  Then, letting $R \to 0$, the flat connections $\nabla_{R,\hbar,u}$ converge to a flat connection
  \begin{equation}\label{conf_limit_conn}
    \nabla_{0,\hbar,u}=\hbar^{-1}\Phi_u +D_{\calQ_{0}}-\hbar\rho_{\calQ_{0}}(\Phi_0).
  \end{equation}
  The triple $(\calP_G, \calP_{P_\Theta}, \omega_u)$, where  $\calP_G$ is equipped with the holomorphic structure $\nabla^{0,1}_{0,\hbar,u}$ and $\omega_u=\nabla^{1,0}_{0,\hbar,u}$, is a $\Theta$-positive oper in a Slodowy slice at parameter $\hbar$ as in Definition \ref{Slodowy_Theta_oper}.
\end{theorem}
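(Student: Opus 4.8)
The plan is to treat the $R$-dependence, which enters \eqref{2-par_fam_flat_conn} only through the harmonic reduction $\calQ(R,u)$, by comparison with the explicit reference reduction $\calQ_0(R)=R^{-h_0}\calQ_0$ attached to the uniformizing point. Since the holomorphic term $\hbar^{-1}\Phi_u$ is independent of $R$, the existence and the shape \eqref{conf_limit_conn} of the limit reduce to the two assertions
\begin{align*}
  D_{\calQ(R,u)} \longrightarrow D_{\calQ_0} \qquad \text{and} \qquad R^2\rho_{\calQ(R,u)}(\Phi_u) \longrightarrow \rho_{\calQ_0}(\Phi_0) \qquad (R\to 0).
\end{align*}
At the uniformizing point $u=0$ both hold on the nose: there $\calQ(R,0)=\calQ_0(R)$, the grading element $h_0$ is parallel for $D_{\calQ_0}$ because the harmonic metric of a $\C^*$-fixed point respects the $\ad_{h_0}$-grading, whence $D_{\calQ_0(R)}=D_{\calQ_0}$; and since $e+\tilde{e}_{\calC}$ has $\ad_{h_0}$-weight $+2$, Lemma~\ref{lem:h0-gauge-action} gives $R^2\rho_{\calQ_0(R)}(\Phi_0)=R^2R^{-h_0}\rho_{\calQ_0}(\Phi_0)=\rho_{\calQ_0}(\Phi_0)$. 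Thus $\nabla_{R,\hbar,0}$ is constant in $R$, and all the content lies in the deformation directions.

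For general $u$ I would realize $(\calP_H,\Phi_u)$ as a point whose downward flow $\lim_{R\to 0}(\calP_H,R\Phi_u)$ is the uniformizing fixed point $p_0=(\calP_H,\Phi_0)$: by the construction \eqref{Phi_u} every summand of $\Psi_u:=\Phi_u-\Phi_0$ is a highest-weight vector, hence lies in $\ker\ad_e$ and carries strictly positive $\ad_{h_0}$-weight relative to $\Phi_0$. Writing $\calQ(R,u)=\exp(s(R,u))\cdot\calQ_0(R)$ with $s(R,u)$ self-adjoint for $\calQ_0(R)$ and subtracting the two Hitchin equations, the inhomogeneity for $s(R,u)$ is assembled from the brackets $R^2[\cdot,\rho_{\calQ_0(R)}(\cdot)]$. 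The positivity of the weight of $\Psi_u$, together with Lemma~\ref{lem:h0-gauge-action}, forces every factor $R^2\rho_{\calQ_0(R)}(\Psi_u)$ to carry a positive power of $R$, while $\Psi_u\in\ker\ad_e$ kills the term $[\Psi_u,e]$. Hence the inhomogeneity degenerates, as $R\to 0$, to the fixed-point equation solved by $\calQ_0$; this is the analytic reflection of the stable $\C^*$-limit, and the existence of the conformal limit in this situation is exactly the content of \cite{CW}, which I would either invoke or replace by the explicit gauge argument of \cite[Proposition~4.8]{Opers16}.

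The \emph{main obstacle} is to turn this formal degeneration into genuine uniform convergence $s(R,u)\to 0$, the difficulty being that the background $\calQ_0(R)=R^{-h_0}\calQ_0$ itself degenerates, so the elliptic estimates must be carried out uniformly in the $R$-dependent gauge rather than against a fixed metric. Here the new feature compared with the split case of \cite{Opers16} (where $\tilde{e}_{\calC}=0$) is the Cayley cross-term $[\Psi_u,\tilde{e}_{\calC}]=-[\Psi_u,\rho_{\calQ_0}(\Phi_0)]$, the one contribution to the inhomogeneity that is not brought down by an explicit positive power of $R$. Showing that it too is controlled is where the genuine analysis sits; I would handle it using the invertibility of the linearized Hitchin operator at the stable point $p_0$ together with a maximum-principle a priori bound on $s(R,u)$. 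Granting $s(R,u)\to 0$ in $C^\infty$, the two displayed convergences follow, the second again by the positive-weight decay of $R^2\rho_{\calQ(R,u)}(\Psi_u)$, and we obtain \eqref{conf_limit_conn}.

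Finally I would identify the limit as a $\Theta$-positive oper in the sense of Definition~\ref{def:theta_pos_oper}. The connection $\nabla_{0,\hbar,u}$ is flat, being a $C^\infty$-limit of flat connections. Decomposing it by type gives $\nabla^{1,0}_{0,\hbar,u}=\partial^{\calQ_0}+\hbar^{-1}\Phi_u=\omega_u$, exactly the connection of Definition~\ref{Slodowy_Theta_oper}, while $\nabla^{0,1}_{0,\hbar,u}$ equips $\calP_G$ with a holomorphic structure for which $\omega_u$ is a holomorphic connection; one checks that $\calP_{P_\Theta}=\calP_C\star\calP_{K^{1/2}}(P_\Theta)$ remains a holomorphic reduction. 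By the construction of $\Phi_u$ recorded in the Remark following Definition~\ref{Slodowy_Theta_oper}, the second fundamental form of $\omega_u$ takes values in the open dense orbit $P_\Theta\cdot f$, so $(\calP_G,\calP_{P_\Theta},\omega_u)$ is a $\Theta$-positive oper, and by construction it lies in the Slodowy slice at parameter $\hbar$. This completes the identification.
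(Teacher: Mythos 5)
Your skeleton is essentially the paper's: compare $\calQ(R,u)$ with the model reduction $\calQ_0(R)=R^{-h_0}\calQ_0$, gauge-fix the difference, exploit the positive $\ad_{h_0}$-weights of $\Phi_u-\Phi_0$, and finish with the same oper identification (your final paragraph matches the paper's almost verbatim, including the point that one changes the holomorphic structure rather than applying an explicit smooth gauge as in \cite{Opers16}). But the step you yourself flag as the ``main obstacle'' is a genuine gap, and the patch you propose is not the right one. You invoke invertibility of the linearized Hitchin operator \emph{at the stable point} $p_0$ --- i.e.\ of $\calL_0$ --- together with a maximum-principle a priori bound on $s(R,u)$. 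What is actually needed to solve for the gauge correction near $R=0$ at the slice point $u$ is bijectivity of $\calL_u$, the linearization in $\chi$ at $(0,0)$ of the operator $N_u(\chi,R)$ of \eqref{elliptic_operator}, and this does not follow from invertibility at $p_0$ alone. The paper's Lemma \ref{lin_oper_bijective} supplies exactly this via a grading argument absent from your sketch: with respect to the $\ad_{h_0}$-decomposition, $(\calL_u-\calL_0)\psi$ lands in strictly higher weight than $\calL_0\psi$, so $\im(\calL_u-\calL_0)\cap\im(\calL_0)=0$ and injectivity of $\calL_0$ propagates to $\calL_u$; bijectivity then follows from self-adjointness and index invariance. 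Note also that the Cayley cross-term $[\tilde{e}_{\calC},\psi]$ you single out as the new analytic difficulty is in fact dispatched algebraically in the injectivity proof for $\calL_0$ (via Lemma \ref{lem:raising-bracket-commute-on-centralizer}, $\tau_\calC$-equivariance, and the vanishing $\bbH^0$ of Lemma \ref{lem:stability-lemma}), not by any maximum principle.

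With $\calL_u$ bijective, the paper sidesteps entirely the ``uniform estimates in a degenerating gauge'' problem you describe: it applies the real-analytic implicit function theorem for Banach spaces to $N_u(\chi,R)=0$ --- an operator written against the \emph{fixed} background $\del_{\calP_H}^{\calQ_0}$, with the $R$-dependence entering real-analytically through $\rho_{\calQ_0(R)}$ --- obtaining a family $\chi_0(R,u)$, real analytic in $R$ on $[0,R_0)$, with $\chi_0(0,u)=0$. Uniqueness of harmonic reductions identifies $\chi(R,u)=\chi_0(R,u)$, so the convergence $\chi(R,u)\to 0$ comes for free, with rate $O(R^4)$ by Taylor expansion as in \cite[Lemma 3.3]{Opers16}, and \eqref{conf_limit_conn} follows by substitution. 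Your fallback of citing \cite{CW} would give abstract existence of the conformal limit (the Bia\l{}ynicki-Birula limit of $(\calP_H,\Phi_u)$ is indeed $p_0$ by the weight structure you note), but it would not identify the limit as the specific connection \eqref{conf_limit_conn} built from $\calQ_0$, which is what the Slodowy-slice oper statement requires; so either way an argument of the Lemma \ref{lin_oper_bijective} plus implicit-function-theorem type must be supplied. Until it is, your proof is conditional on its stated obstacle.
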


Before giving the proof of our main result above, we first consider the injectivity of a relevant elliptic operator at the Cayley uniformizing Higgs bundles.

\subsection{Injectivity at \texorpdfstring{$\C^*$}{C*}-fixed points}

At each point $u$ in the Slodowy slice, we have a real parameter family of Higgs bundles $(\calP_H,R\Phi_u)$ from Definition \ref{def:Slodowy_slice} with hermitian connections $D_{\calQ(R,u)}$ solving Hitchin's equations. Then there is an infinitesimal gauge transformation $\chi(R,u) \in \Omega^0(\calP_H(\frakh^\R))$ such that
\begin{align*}
  D_{\calQ(R,u)} &= \delbar_{\calP_H} + e^{-\chi} \circ \del_{\calP_H}^{\calQ_0(R)} \circ e^{\chi} \\
                 &= \delbar_{\calP_H} + e^{-\chi}R^{-h_0} \circ \del_{\calP_H}^{\calQ_0} \circ R^{h_0} e^{\chi}. 
\end{align*}
Note that the holomorphic structure remains the same since the holomorphic bundle $\calP_H$ remains unchanged in the Slodowy slice.

Consider now the elliptic operator
\begin{equation}\label{elliptic_operator}
  N_u(-,R): \Omega^0(\calP_H(\frakh^\R)) \to \Omega^2(\calP_H(\frakh^\R))
\end{equation}
given by 
\begin{equation*}
  N_u(\chi,R) = [\delbar_{\calP_H}, e^{-\chi} \circ \del_{\calP_H}^{\calQ_0} \circ e^{\chi}] - [\Phi_u, e^{-\chi} \circ \rho_{\calQ_0(R)} (\Phi_u)]. 
\end{equation*}

\begin{lemma}
  \label{lem:stability-lemma}
  Let $(\calP_H, \Phi)=\Psi(\calP_C, \psi, q) \in \calM(G^\R)$ be any Higgs bundle in a Cayley component. If it is stable as a $G$-Higgs bundle, then $\bbH^0(C^\bullet(\calP_C, \psi, q)) = 0$. 
\end{lemma}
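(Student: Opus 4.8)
The plan is to deduce the claimed vanishing directly from Proposition~\ref{prop:hypercohom-of-G-Higgs-bundles}(2), since the lemma is at bottom the statement that a Cayley-component Higgs bundle which is stable as a $G$-Higgs bundle carries no nontrivial infinitesimal automorphisms. First I would unwind the notation. By definition of the Cayley map, $(\calP_H, \Phi) = \Psi(\calP_C, \psi, q)$ is a genuine $G^\R$-Higgs bundle (with $L = K$), and the complex $C^\bullet(\calP_C, \psi, q)$ is precisely its deformation complex $C^\bullet_{G^\R}(\calP_H, \Phi)$ of the form~\eqref{def_cx}, written in terms of the Cayley data. The long exact sequence in hypercohomology recalled after~\eqref{def_cx} then identifies $\bbH^0(C^\bullet_{G^\R}(\calP_H, \Phi))$ with the infinitesimal automorphism space $\text{aut}(\calP_H, \Phi)$, so that the assertion $\bbH^0 = 0$ is equivalent to the absence of nontrivial infinitesimal automorphisms.

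Next I would check that the two hypotheses of Proposition~\ref{prop:hypercohom-of-G-Higgs-bundles}(2) are in force for $(\calP_H, \Phi)$. The proposition requires $G^\R$ to be semisimple: here the canonical real form $\frakg^\R = \frakg^\tau$ is a real form of the complex \emph{simple} Lie algebra $\frakg$, hence a simple real Lie algebra and in particular semisimple, so this holds. The second requirement is that the pair be stable when viewed as a $G$-Higgs bundle, which is exactly the standing hypothesis of the lemma. With both hypotheses verified, Proposition~\ref{prop:hypercohom-of-G-Higgs-bundles}(2) yields $\bbH^0(C^\bullet_{G^\R}(\calP_H, \Phi)) = 0$ (indeed also $\bbH^2 = 0$), which is precisely the claim.

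\textbf{Main obstacle.} Because the entire analytic and representation-theoretic content is carried by the cited proposition, the only point requiring genuine care is the identification in the first step: confirming that the complex indexed by the Cayley data $(\calP_C, \psi, q)$ coincides with the intrinsic $G^\R$-deformation complex of the image Higgs bundle $\Psi(\calP_C, \psi, q)$, so that the proposition applies verbatim. Since the Cayley map $\Psi$ is an open and closed injection of moduli spaces, its image consists of honest polystable $G^\R$-Higgs bundles equipped with their standard deformation theory; once this bookkeeping is settled, the vanishing follows immediately with no further estimates.
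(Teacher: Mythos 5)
Your second step is exactly the paper's: both arguments invoke Proposition~\ref{prop:hypercohom-of-G-Higgs-bundles}(2) for $(\calP_H,\Phi)$ viewed as a $G^\R$-Higgs bundle (and your check that $G^\R$ is semisimple, being a real form of the simple algebra $\frakg$, is fine). The gap is in your first step, which you dismiss as ``unwinding notation.'' The complex $C^\bullet(\calP_C,\psi,q)$ is \emph{not} the deformation complex $C^\bullet_{G^\R}(\calP_H,\Phi)$ ``written in terms of the Cayley data'': it is the deformation complex of the Cayley datum itself, built from the principal $C$-bundle $\calP_C$ and the twists $K^{m_\calC+1}$ and $K^{l_i+1}$, whereas $C^\bullet_{G^\R}(\calP_H,\Phi)$ is built from the $H$-bundle $\calP_H=\calP_C\star\calP_{K^{1/2}}(H)$ and the twist $K$, with $\frakh\to\frakm\otimes K$ as in~\eqref{def_cx}. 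These are genuinely different complexes of sheaves; what is true, and what the lemma needs, is an isomorphism of their hypercohomologies, $\bbH^0(C^\bullet(\calP_C,\psi,q))\simeq\bbH^0(C^\bullet_{G^\R}(\calP_H,\Phi))$. This is a nontrivial comparison of deformation theories (it rests on the $\ad_h$-weight decomposition of $\frakg$ relative to the magical triple), and the paper cites it as \cite[Proposition~7.10]{Cayley}; your proof supplies no substitute for it.

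Your fallback justification --- that $\Psi$ is an open and closed injection on moduli spaces, so its image carries ``standard deformation theory'' --- does not close this gap, for two reasons. First, $\bbH^0$ is the infinitesimal \emph{automorphism} space, not the tangent space $\bbH^1$; even granting that an open immersion of moduli spaces identifies tangent spaces at corresponding points, it says nothing a priori about automorphism groups, since the moduli spaces are formed by passing to isomorphism classes. Second, the argument is circular in spirit: in \cite{Cayley} the openness of the Cayley map is itself established via precisely this comparison of deformation complexes (Proposition~7.10 there), so one cannot use openness of $\Psi$ to derive the comparison. The fix is simply to replace your identification claim with an explicit appeal to \cite[Proposition~7.10]{Cayley}, after which your application of Proposition~\ref{prop:hypercohom-of-G-Higgs-bundles}(2) completes the proof exactly as in the paper.
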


\begin{proof}
  Since $(\calP_H, \Phi)$ is assumed to be stable as a $G$-Higgs bundle, Proposition \ref{prop:hypercohom-of-G-Higgs-bundles} gives us that $\bbH^0(C^\bullet(\calP_H,\Phi))=0$. Then \cite[Proposition~7.10]{Cayley} implies
  \begin{align*}
    \bbH^0(C^\bullet(\calP_C, \psi, q))=\bbH^0(C^\bullet(\calP_H,\Phi))=0.
  \end{align*}
\end{proof}

\begin{lemma}
  \label{lem:highest-weight-and-sigma-invariant}
  For a $\Theta$-principal triple $\langle f, h, e \rangle$ as was defined in Section \ref{sec:theta_pos_str}, we have  
  \begin{align*}
    \ker(\ad(e)) \cap \frakh = \frakc. 
  \end{align*}
\end{lemma}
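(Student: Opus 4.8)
The plan is to reduce the claim to the explicit weight-module description of the centralizer $V := \ker(\ad e)$ together with the eigenspace description of the Cartan decomposition $\frakg = \frakh \oplus \frakm$ attached to the magical involution $\sigma$. By the bridge result relating the two classes of triples (see \cite[Theorem 8.14]{Cayley} and the remark following it), the $\Theta$-principal triple $\fraks = \langle f,h,e\rangle$ is magical in the complexification $\frakg$, so all the structural decompositions recorded earlier apply verbatim, with $\frakh,\frakm$ the $(\pm 1)$-eigenspaces of the associated magical involution $\sigma$ and $\frakc = Z_{\frakg}(\fraks) = W(0)$.

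First I would recall that, since $\ad_e$ raises the $\ad_h$-weight by $2$ and annihilates exactly the highest weight vector of each irreducible $\fraks$-summand, the centralizer takes the form
\[
  \ker(\ad e) = V = W(0) \oplus \bigoplus_{j=1}^M W(2m_j)_{2m_j} = \frakc \oplus \bigoplus_{j=1}^M V_{2m_j}.
\]
For the inclusion $\frakc \subseteq \ker(\ad e) \cap \frakh$, note that $\frakc = Z_{\frakg}(\fraks)$ centralizes $e$, so $\frakc \subset \ker(\ad e)$; and since $\sigma$ acts as the identity on $W(0) = \frakc$ by the very definition of the magical involution, we also have $\frakc \subset \frakh$.

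For the reverse inclusion I would use the sign computation suggested by the name of the lemma. A highest weight vector $x \in V_{2m_j} = W(2m_j)_{2m_j}$ sits in the weight space $W(k)_{k-2i}$ with $i=0$, whence $\sigma(x) = (-1)^{0+1}x = -x$, so every $V_{2m_j}$ lies in the $(-1)$-eigenspace $\frakm$. Thus, writing any $x \in V \cap \frakh$ as $x = x_0 + \sum_j x_j$ with $x_0 \in \frakc$ and $x_j \in V_{2m_j}$, and applying $\sigma$ together with $\sigma(x)=x$, one obtains $x_0 - \sum_j x_j = x_0 + \sum_j x_j$, forcing each $x_j = 0$ and hence $x = x_0 \in \frakc$. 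Combining the two inclusions gives $\ker(\ad e) \cap \frakh = \frakc$.

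The argument is essentially bookkeeping once the framework is in place, so the only genuine (and minor) care required is to confirm that the magical-triple decompositions may be invoked for the $\Theta$-principal triple rather than for a literally magical one; this is exactly the content of the cited bridge result, after which the eigenspace placement of $\frakc$ in $\frakh$ and of the highest weight spaces $V_{2m_j}$ in $\frakm$ is immediate. I do not anticipate any serious obstacle beyond keeping the involution $\sigma$ and its induced signs consistent with the $\ad_h$-grading of the $\Theta$-principal triple.
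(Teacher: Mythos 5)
Your proof is correct and takes essentially the same route as the paper's: both rest on the decomposition $\ker(\ad e) = \frakc \oplus \bigoplus_j V_{2m_j}$ together with the fact that the magical involution $\sigma$ acts as $+1$ on $W(0)=\frakc$ and as $(-1)^{0+1}=-1$ on each highest weight space $V_{2m_j}$, so intersecting with $\frakh = \frakg^{\sigma}$ leaves exactly $\frakc$. Your explicit appeal to \cite[Theorem 8.14]{Cayley} to justify applying the magical-triple decompositions to a $\Theta$-principal triple is a point the paper leaves implicit, but it does not change the argument.
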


\begin{proof}
  By definition we have
  \begin{align*}
    \ker(\ad(e)) = \frakc \oplus \bigoplus_{j} V_{2m_j}. 
  \end{align*}
  Recall that $\frakh = \frakg^\sigma$ is the fixed point set of $\sigma$, the magical involution. In particular $\sigma$ acts as $1$ on $\frakc$ and $-1$ on the highest weight spaces $V_{2m_j}$ by definition. 
\end{proof}

\begin{lemma}\label{lin_oper_bijective}
Let $\hat{\frakm}_\calC \subset \frakm_{\calC}$ be the subspace of non-negative $\ad_{h_{\calC}}$-weight spaces and let $u \in H^0(\calP_C(\hat{\frakm}_\calC) \otimes K^{m_{\calC}+1}) \oplus \sum_{i=1}^{r(e)} H^0(K^{l_i+1})$. Consider $\calL_u$, the linearization of $N_u(\chi,R)$ at $(0,0)$ with respect to $\chi$: 
  \begin{align*}
    &\calL_u: \Omega^0(\calP_H(\frakh^\R)) \to \Omega^2(\calP_H(\frakh^\R)) \\
    &\calL_u := D_\chi N_u|_{(0,0)} = \delbar_{\calP_H} \del_{\calP_H}^{\calQ_0} - [\Phi_u, [\rho_{\calQ_0}(\Phi_0), - ]]. 
  \end{align*}
  Then $\calL_u$ is a bijective linear operator. 
\end{lemma}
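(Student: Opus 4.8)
The plan is to show that $\calL_u$ is an elliptic operator of index zero and then to prove that it is injective; bijectivity follows at once. For ellipticity, observe that the bracket term $[\Phi_u,[\rho_{\calQ_0}(\Phi_0),-]]$ is a bundle endomorphism, hence of order zero, so the principal symbol of $\calL_u$ agrees with that of the $\delbar$-Laplacian $\delbar_{\calP_H}\del_{\calP_H}^{\calQ_0}$, which is elliptic on a Riemann surface. Consequently $\calL_u$ is Fredholm, and since a zeroth-order perturbation is relatively compact and does not alter the index, $\mathrm{ind}(\calL_u)=\mathrm{ind}(\delbar_{\calP_H}\del_{\calP_H}^{\calQ_0})=0$, the latter being a Laplace-type operator on $\frakh^\R$-valued functions. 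It therefore suffices to prove $\ker\calL_u=0$.

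The key device is to compare $\calL_u$ with the genuine deformation Laplacian of the Cayley uniformizing Higgs bundle, namely $\calL_u^{\mathrm{sym}}:=\delbar_{\calP_H}\del_{\calP_H}^{\calQ_0}-[\Phi_0,[\rho_{\calQ_0}(\Phi_0),-]]$, in which $\Phi_0$ occupies both slots. Pairing $\calL_u^{\mathrm{sym}}\chi$ with $\chi$ in $L^2$ (identifying $\Omega^2\cong\Omega^0$ via the Hodge star) and integrating by parts, using that $\del_{\calP_H}^{\calQ_0}$ is the $(1,0)$-part of the Chern connection for $\calQ_0$ and that $-\rho_{\calQ_0}$ is the associated metric conjugation, produces the Bochner identity $\langle\calL_u^{\mathrm{sym}}\chi,\chi\rangle_{L^2}=\|\del_{\calP_H}^{\calQ_0}\chi\|^2+\|[\rho_{\calQ_0}(\Phi_0),\chi]\|^2\ge 0$. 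Hence $\calL_u^{\mathrm{sym}}$ is non-negative with kernel $\{\chi\in\Omega^0(\calP_H(\frakh^\R)):\delbar_{\calP_H}\chi=0,\ [\Phi_0,\chi]=0\}$, which is exactly the real locus of $\bbH^0(C^\bullet(\calP_H,\Phi_0))$; here Lemma \ref{lem:highest-weight-and-sigma-invariant}, identifying $\ker(\ad_e)\cap\frakh$ with $\frakc$, is what pins down these pointwise conditions. Since the uniformizing Higgs bundle is stable as a $G$-Higgs bundle, Lemma \ref{lem:stability-lemma} together with Proposition \ref{prop:hypercohom-of-G-Higgs-bundles} gives $\bbH^0=0$, so $\calL_u^{\mathrm{sym}}$ is injective.

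I would then exploit the $\ad_{h_0}$-weight grading of $\calP_H(\frakg)$, where $h_0=h+(m_\calC+1)h_\calC$ is the semisimple element of Proposition \ref{prop:Cayley-Cstar-fixed-point}. This grading is holomorphic, so it is preserved by $\delbar_{\calP_H}$, and it is preserved by $\del_{\calP_H}^{\calQ_0}$ because $\rho_{\calQ_0}$ reverses weights while $\calQ_0$ is adapted to the $\C^*$-fixed point; the bracket $[\Phi_0,[\rho_{\calQ_0}(\Phi_0),-]]$ is weight-preserving as well, since $\Phi_0$ has weight $-2$ and $\rho_{\calQ_0}(\Phi_0)$ has weight $+2$ by Lemma \ref{lem:h0-gauge-action}. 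Thus $\calL_u^{\mathrm{sym}}$ preserves the grading, whereas the difference $\calL_u-\calL_u^{\mathrm{sym}}=-[\Phi_u-\Phi_0,[\rho_{\calQ_0}(\Phi_0),-]]$ strictly raises it, because each Slodowy-slice direction $P_\calC X_\calC,\ P_iX_i$ in (\ref{Phi_u}) has $\ad_{h_0}$-weight strictly greater than $-2$. In this upper-triangular situation a nonzero $\chi\in\ker\calL_u$ would have a lowest-weight component $\chi_{k_0}\neq 0$, and the weight-$k_0$ part of the equation $\calL_u\chi=0$ reads $\calL_u^{\mathrm{sym}}\chi_{k_0}=0$ (the weight-raising term cannot contribute at the bottom weight), contradicting the injectivity of $\calL_u^{\mathrm{sym}}$. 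Hence $\ker\calL_u=0$, and combined with $\mathrm{ind}(\calL_u)=0$ this shows $\calL_u$ is bijective.

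The hard part is the strict weight-raising property of the perturbation: one must check that every direction in the base $\calB_{p_0}$ of (\ref{base_space}), that is, the highest-weight vectors $X_\calC$ and $X_i$ arising from $W(2m_i)\simeq Z_{2m_i}\otimes S^{m_i}(\C^2)$, carries $\ad_{h_0}$-weight exceeding that of $\Phi_0$ for the \emph{shifted} grading $h_0=h+(m_\calC+1)h_\calC$, and not merely for $\ad_h$. A secondary technical point, needed to make the triangular argument rigorous, is confirming that $\del_{\calP_H}^{\calQ_0}$ exactly preserves the $\ad_{h_0}$-grading, which rests on the harmonic metric $\calQ_0$ respecting the $\C^*$-weight decomposition at the uniformizing point, as recorded in the harmonic-reduction discussion.
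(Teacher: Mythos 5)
Your proposal is correct and follows essentially the same route as the paper's proof: reduce to index zero by viewing the bracket term as a zeroth-order (compact) perturbation, show the $u=0$ operator (your $\calL_u^{\mathrm{sym}}$ is exactly the paper's $\calL_0$) is self-adjoint and injective via the Bochner identity together with stability and the vanishing of $\bbH^0$, and then conclude injectivity of $\calL_u$ from the strict $\ad_{h_0}$-weight-raising property of $\calL_u-\calL_0$ by a lowest-weight-component argument. The only minor deviation is that you identify $\ker\calL_0$ directly with the real locus of $\bbH^0(C^\bullet(\calP_H,\Phi_0))$, while the paper descends to the Cayley partner and invokes $\bbH^0(C^\bullet(\calP_C,f_\calC))=0$; both vanish by stability, and the weight bound you flag as the remaining check (each slice direction $P_\calC X_\calC$, $P_i X_i$ having $\ad_{h_0}$-weight strictly greater than $-2$, since $\hat{\frakm}_\calC$ has non-negative $\ad_{h_\calC}$-weight and zero $\ad_h$-weight, and $X_i\in\frakg(e)$ centralizes $h_\calC$) is precisely the fact the paper uses when asserting $(\calL_u-\calL_0)\psi\in\frakg'_{>k}$.
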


\begin{proof}
Let $(\calP_H, \Phi_0) = \Psi(\calP_C, f_\calC)$ be a Cayley uniformizing Higgs bundle in $\calM(G^\R)$, which is stable as a $G$-Higgs bundle. Then $(\calP_H, \Phi_0)=(\calP_C \star \calP_{K^{1/2}}(H), f + \tilde{f}_{\calC})$, where $\ad_f^{m_{\calC}}(\tilde{f}_{\calC})=f_{\calC}$, is a $\C^*$-fixed point in a Cayley component. 
We wish to show that for any $u$, the operator $\mathcal{L}_u$ is bijective.  Since $\calL_u$ deforms continuously to $\calL_0$, and the index of an operator is topologically invariant, it would be sufficient to show that $\calL_0$ is bijective. 

 \noindent Using the $L^2$-pairing given by the natural hermitian metric on $\calP_H(\frakh^\R)$, we have
  \begin{align*}
    (\calL_0\psi, \psi) &= (\delbar_{\calP_H} \del_{\calP_H}^{\calQ_0} \psi, \psi) - ([\Phi_0, [\rho_{\calQ_0}(\Phi_0), \psi]], \psi) \\
                        &= (\del_{\calP_H}^{\calQ_0} \psi, \del_{\calP_H}^{\calQ_0} \psi) + ([\rho_{\calQ_0}(\Phi_0), \psi],[\rho_{\calQ_0}(\Phi_0),\psi]) \\
                        &= (\psi, \calL_0 \psi). 
  \end{align*}
  Hence $\calL_0$ is self-adjoint, and thus it is bijective if and only if it is injective. It remains to show that $\calL_0$ is injective. 

 \noindent  We have $\calL_0\psi = 0$ if and only if
\[ \| \del_{\calP_H}^{\calQ_0} \psi \|^2 + \|[\rho_{\calQ_0}(\Phi_0), \psi]\|^2=\| \del_{\calP_H}^{\calQ_0} \psi \|^2 + \|[e + \tilde{e}_{\calC}, \psi]\|^2 = 0, \]
that is, if and only if 
\[ \del_{\calP_H}^{\calQ_0} \psi = 0, [e, \psi] = 0, \text{ and } [\tilde{e}_{\calC}, \psi] = 0.\]
Note that $\tilde{e}_{\calC} \in \frakg_{-2m_{\calC}}$ implies that $\ad_{\tilde{e}_{\calC}}$ is a degree $-2m_{\calC}$ map with respect to the $\ad_h$-grading, whereas $\ad_e$ is a degree $2$ map, hence the equivalence. By Lemma \ref{lem:highest-weight-and-sigma-invariant}, we have $[e, \psi] = 0$ implies that $\psi \in H^0(\calP_H(\frakc \cap \frakh^\R))$. Moreover, $\psi$ is real implies that
  \begin{align*}
    0 = \bar{\del_{\calP_H}^{\calQ_0} \psi} = \bar{\del_{\calP_C}^{\calQ_{\calC}} \psi} = \delbar_{\calP_C} \tau_\calC(\psi) = \delbar_{\calP_C} \psi.
  \end{align*}

\noindent  By Equation \ref{eq:tilde-e} and Lemma \ref{lem:raising-bracket-commute-on-centralizer} we get $[\tilde{e}_{\calC}, \psi] = 0$ implies $[e_{\calC}, \psi] = 0$, but then
  \begin{align*}
    [f_{\calC},\psi] = [\tau_\calC(e_{\calC}),\tau_\calC(\psi)] = \tau_\calC[e_{\calC},\psi] = 0.
  \end{align*}
  We have just shown that $\psi$ is an $f_{\calC}$-invariant holomorphic section. By the stability of $(\calP_H,\Phi_0)$ as a $G$-Higgs bundle and Lemma \ref{lem:stability-lemma}, we have $\bbH^0(C^\bullet(\calP_C,f_{\calC})) = 0$, which implies that $\psi = 0$. Hence $\calL_0$ is injective as required. 

  Next we consider the injectivity of $\calL_u$ in general. Let
  \begin{align*}
    \frakg = \bigoplus_{k \in \Z} \frakg'_k
  \end{align*}
  be the characteristic decomposition with respect to $\ad_{h_0}$. For each $k$, suppose $\psi \in \frakg_k' \cap \frakh^\R$, then
  \begin{align*}
    (\calL_u - \calL_0)\psi &= [\Phi_u-\Phi_0, [\rho_{\calQ_0}(\Phi_0), \psi]] \\
                            &= [P_{\calC}+\sum_{i=1}^{r(e)} P_i, [e + \tilde{e}_{\calC}, \psi]] \in \frakg'_{> k}
  \end{align*}
  has grading strictly larger than $k$. Compare this with
  \begin{align*}
    \calL_0\psi = \delbar_{\calP_H} \del_{\calP_H}^{\calQ_0} \psi + [f + \tilde{f}_{\calC}, [e + \tilde{e}_{\calC}, \psi]] \in \frakg'_k. 
  \end{align*}
  Since $\frakg'_{>k} \cap \frakg'_k = 0$, we see that the image of these two operators intersect trivially. 

\noindent Assuming $\calL_u\psi=0$, then
  \begin{align*}
    (\calL_u-\calL_0)\psi = \calL_0\psi \in \im(\calL_u-\calL_0) \cap \im(\calL_0) = 0. 
  \end{align*}
  By the injectivity of $\calL_0$, we get $\psi=0$, thus $\calL_u$ is injective.
\end{proof}

\subsection{Proof of Theorem \ref{main_theorem}}
The proof proceeds in a similar way to the one of Theorem 4.11 in \cite{Opers16} for the case of $G$-Higgs bundles in the Hitchin components. Namely, we show that that the harmonic reduction $\calQ(R,u)$ approaches $\calQ_{0}(R)$ as $R \to 0$. Remember from Definition \ref{def:harmonic_red} that 
\[\calQ_{0}(R)=R^{-\frac{h_0}{2}}\calQ_{0},\] 
for the semisimple element $h_0$ in Equation (\ref{eq:h0-semisimple}), where $\calQ_{0} \equiv \calQ_{0}(R=1)$.

\noindent Moreover, the harmonic reduction $\calQ(R,u)$ is described by
\[\calQ(R,u)=e^{-\frac{1}{2}\chi(R,u)}\calQ_{0}(R),\]
for a unique infinitesimal gauge transformation $\chi(R,u) \in \Omega^0(\calP_H(\frakh^\R))$. Then, for the nonlinear operator $N_u(-,R)$ in (\ref{elliptic_operator}), we have seen in Lemma \ref{lin_oper_bijective}
that its linearization is bijective. The real analytic version of the Implicit Function Theorem for Banach spaces now provides the existence of a real analytic (in $R$) section $\chi_0(R,u) \in \Omega^0(\calP_H(\frakh^\R))$, for $R \in [0,R_0)$ such that
\[N_u(\chi_0(R,u), R)=0,\]
and $\chi_0(0,u)=0$.

\noindent The real analytic map $\chi_0(R,u)$ thus expands in a Taylor series around $R=0$ and, as in the proof of \cite[Lemma 3.3]{Opers16}, plugging in this Taylor expansion into the equation $N_u(\chi(R,u), R)=0$ and expanding in powers of $R$, one sees that the first nonzero term in the expansion appears at order $R^4$. Thus, substituting 
\[\calQ(R,u)=e^{-\frac{1}{2}\chi_0(R,u)} \calQ_{0}(R)\]
we obtain
\begin{align*}
  \nabla_{R,\hbar,u} & = \hbar^{-1}\Phi_u +D_{\calQ(R,u)}-\hbar R^2\rho_{\calQ(R,u)}(\Phi_u)\\
                     & =\hbar^{-1}\Phi_u +e^{-\chi_0(R,u)} \circ D_{\calQ_{0}(R)} \circ e^{\chi_0(R,u)} - \hbar e^{-\chi_0(R,u)} \circ \rho_{\calQ_{0}}(\Phi_{R^2u}) \circ e^{\chi_0(R,u)}.
\end{align*}
In the limit $R \to 0$, we have $\chi_0(R,u) \to 0$, and so we get
\[\nabla_{0,\hbar,u}=\hbar^{-1}\Phi_u +D_{\calQ_{0}}-\hbar\rho_{\calQ_{0}}(\Phi_0).\]
The limiting connection $\nabla_{0,\hbar,u}$ above is a flat non-holomorphic connection, since it has a holomorphic and an anti-holomorphic part as follows 
\begin{align*}
  \nabla_{0,\hbar,u} & =\hbar^{-1}\Phi_u +D_{\calQ_{0}}-\hbar\rho_{\calQ_{0}}(\Phi_0)\\
                     & = \hbar^{-1}\Phi_u + \delbar_{\calP_H}+ \del_{\calP_H}^{\calQ_0}-\hbar\rho_{\calQ_{0}}(\Phi_0)\\
                     &= \underbrace{\del_{\calP_H}^{\calQ_{0}} + \hbar^{-1}\Phi_u}_{\nabla^{1,0}} +  \underbrace{\delbar_{\calP_H} -\hbar\rho_{\calQ_{0}}(\Phi_0)}_{\nabla^{0,1}}.
\end{align*}
Equipping the underlying principal bundle $\mathcal{P}_H$ with the holomorphic structure $\nabla^{0,1}$ above, we get a $\Theta$-positive oper $(\mathcal{P}_G, \mathcal{P}_{P_{\Theta}}, \omega)$ in the sense of Definition \ref{def:theta_pos_oper}.

\begin{remark}
  In \cite[Section 4]{Opers16}, a family of $G$-opers parameterized by the Hitchin base $\mathcal{B}= \bigoplus_i H^0\left( X, K_X^{m_i+1}\right)$ was constructed in terms of an explicit description of the transition functions defining the holomorphic $G$-bundles. This allowed for an explicit consideration of a smooth gauge transformation given by the matrix $M_{\hbar,z}= \text{exp}(\hbar(\partial_z \text{log} \lambda_{\natural})e)$ to compute directly that the limiting connection $\nabla_{0,\hbar,u}$ is gauge equivalent to a $G$-oper connection of the form $d+\hbar^{-1}\Phi_u$. In our more general case considered in this article, the principal bundles are constructed more abstractly, thus instead of modifying by a smooth gauge as in \cite{Opers16}, we equip the bundles with another holomorphic structure to show that the limit is a $\Theta$-positive oper.
\end{remark}




\bigskip
\noindent\small{\textsc{Department of Mathematics, University of Patras}\\
  Panepistimioupolis Patron, Patras 26504, Greece}\\
\emph{E-mail address}:  \texttt{gkydonakis@math.upatras.gr}

\bigskip
\noindent\small{\textsc{Kavli Institute for the Physics and Mathematics of the Universe}\\
  University of Tokyo, Kashiwa, Chiba 277-8583, Japan}\\
\emph{E-mail address}:  \texttt{mengxue.yang@ipmu.jp}

\end{document}